\documentclass[11pt]{article}

\usepackage{amsmath,amsfonts,amssymb,amsthm}
\usepackage{graphicx}
\graphicspath{{figures/}}
\usepackage{float}
\usepackage{caption}
\usepackage{booktabs}
\usepackage{authblk}
\usepackage{etoolbox}
\usepackage[top=2.5cm,bottom=2.5cm,left=3cm,right=3cm]{geometry}
\usepackage[hidelinks]{hyperref}
\hypersetup{
  pdftitle={Convergence of Least Squares Methods for PDEs on R\textasciicircum d with Shallow ReLU\textasciicircum k Networks},
  pdfauthor={Tong Mao, Lei Shi, and Hanfei Zhou},
  pdfkeywords={shallow ReLU\textasciicircum k networks, weighted least squares, Gaussian Sobolev spaces, inverse inequalities}
}

\apptocmd{\thebibliography}{\setlength{\itemsep}{0pt}}{}{}
\allowdisplaybreaks[4]

\newcommand{\R}{\mathbb R}
\newcommand{\N}{\mathbb N}

\newcommand{\dd}{\,\mathrm d}
\newcommand{\Sph}{\mathbb S}

\newcommand{\cA}{\mathcal A}

\newcommand{\cH}{\mathcal H}

\newcommand{\norm}[1]{\left\|#1\right\|}

\newcommand{\floor}[1]{\left\lfloor #1\right\rfloor}

\newtheorem{theorem}{Theorem}
\newtheorem{assumption}{Assumption}
\newtheorem{corollary}{Corollary}

\newtheorem{lemma}{Lemma}
\newtheorem{proposition}{Proposition}

\numberwithin{equation}{section}

\title{Convergence of Least Squares Methods for PDEs on $\mathbb R^d$ with Shallow $\mathrm{ReLU}^k$ Networks$^\dag$\footnotetext{\dag~The work of Lei Shi is partially supported by the National Natural Science Foundation of China (Grant No.~12571099). All authors contributed equally to this work and are listed alphabetically. The corresponding author is Hanfei Zhou.}}

\author[1]{Tong Mao}
\author[2,3]{Lei Shi}
\author[2]{Hanfei Zhou}

\affil[1]{Shenzhen Loop Area Institute, Shenzhen 518000, China}
\affil[2]{School of Mathematical Sciences, \linebreak
Fudan University, Shanghai 200433, China
}
\affil[3]{
Shanghai Key Laboratory for Contemporary Applied Mathematics, \linebreak
Fudan University, Shanghai 200433, China
}
\affil[ ]{Email: tongmao@slai.edu.cn, leishi@fudan.edu.cn, zhouhf23@m.fudan.edu.cn}
\date{}

\begin{document}
	\maketitle
 
\begin{abstract}
We establish inverse inequalities in Gaussian Sobolev norms for shallow $\mathrm{ReLU}^k$ trial spaces. The estimates identify the sharp regularity threshold and quantify the dependence on parameter separation and bias magnitude. Based on these estimates, we prove convergence of weighted least squares discretizations of linear PDEs on $\mathbb R^d$ using shallow networks with prescribed inner parameters and finitely many residual samples in an expanding ball. Under suitable stability and regularity assumptions, we obtain an error bound of order $h^{k+(d+1)/2-r}\delta_\pm^{-(S-r)}$ in the Gaussian Sobolev norm of order $S$, up to logarithmic factors, where $r$ is the PDE order, $h$ is the parameter fill distance, and $\delta_\pm$ is the antipodal separation. The convergence results cover deterministic and random sampling and imply error bounds on fixed bounded regions by restriction. We construct parameter sets with $\delta_\pm\asymp h$, yielding a convergence rate in terms of the network width that matches a lower bound up to logarithmic factors. Numerical experiments support the theoretical results and provide practical guidance on balancing accuracy and stability.
\end{abstract}

{\textbf{Keywords and phrases:} Shallow $\mathrm{ReLU}^k$ networks; Weighted least squares; Gaussian Sobolev spaces; Inverse inequalities.}

\section{Introduction}

Residual minimization is widely used in neural network methods for solving partial differential equations (PDEs) \cite{KarniadakisEtAl2021PhysicsInformed}, and the resulting accuracy depends on approximation, sampling, and optimization \cite{DeRyckMishra2024,MishraMolinaro2023,WangTengPerdikaris2021}. When all parameters are trained, residual minimization is generally nonconvex, and the resulting optimization error can be difficult to quantify \cite[Sections~2.5 and~7.2]{DeRyckMishra2024}. To avoid this nonconvex optimization, one can fix the inner parameters and determine only the outer coefficients, yielding a convex least squares problem for linear PDEs. This principle underlies extreme learning machines \cite{HuangZhuSiew2006} and has been used in neural collocation and related PDE solvers \cite{CalabroFabianiSiettos2021,ChiChenYang2024,DongLi2021}.

Shallow neural networks are commonly used in these methods. For the $\mathrm{ReLU}^k$ activation, the trial functions on $\mathbb R^d$ take the form \[ v(x)=\sum_{j=1}^J c_j(a_j\cdot x-b_j)_+^k, \qquad a_j\in\mathbb S^{d-1},\quad b_j\in\mathbb R. \] Here $k\ge1$ is an integer, $J$ is the network width, and $z_+:=\max\{z,0\}$. The approximation properties of shallow networks have been studied \cite{SiegelXu2022HighOrder,MaoZhou2023,SiegelXu2024Sharp,MaoSiegelXu2026,LiuMaoXu2025Integral,Xu2020FiniteNeuron}.
These approximation estimates do not by themselves determine the accuracy of a discrete least squares solution. Recovering the approximation order requires discrete stability \cite{NarayanJakemanZhou2017,CohenDavenportLeviatan2013,CohenMigliorati2017}, which we obtain by combining PDE stability with sampling inequalities \cite{NarcowichWardWendland2005,WendlandRieger2005} and inverse inequalities \cite{BrennerScott2008,MhaskarNarcowichPrestinWard2010}, which for kernel spaces depend on the separation of the centers \cite{NarcowichWardWendland2006,NarcowichSunWardWendland2007,Ward2012}. For shallow $\mathrm{ReLU}^k$ networks, these inverse estimates are sensitive to the geometry of the prescribed directions and biases, since nearly coincident $\mathrm{ReLU}^k$ neurons can produce cancellations. The central difficulty is to quantify the dependence on parameter separation and bias magnitude as the trial space grows.

PDEs on $\mathbb R^d$ include Fokker--Planck equations in stochastic dynamics \cite{DujardinHerauLafitte2020,FokGuoTang2002}. As in Hermite spectral methods \cite{FunaroKavian1991,Guo1999Hermite}, we work in Gaussian Sobolev spaces $\mathcal H_\gamma^s$ associated with the standard Gaussian measure $\gamma$; neural network approximation in $L^2(\mathbb R^d,\gamma)$ is studied in \cite{SchwabZech2023}. For shallow $\mathrm{ReLU}^k$ networks with fixed inner parameters, integral representations and optimal approximation rates on bounded domains are established in \cite{LiuMaoXu2025Integral}, and sharp condition-number estimates on $\mathbb S^d$ for antipodally quasiuniform parameters are given in \cite{LiuMaoXu2025Stability}; residual least squares convergence for spectral equations on $\mathbb S^d$ is proved in \cite{LiuMaoXu2026DeterministicLS} via a Bernstein inequality in terms of the antipodal separation. On $\mathbb R^d$, the bias range grows with the sampling region, the operator may have unbounded coefficients, and the global error must be controlled from residual samples in a bounded region. We handle these issues with Gaussian inverse estimates, local residual estimates that account for the coefficient growth, and a coefficient constraint that controls the unsampled tails. Because the analysis works directly with affine networks and residual samples in $\mathbb R^d$, it also gives error bounds on fixed bounded regions by restriction, for targets that extend to solutions on $\mathbb R^d$. By contrast, the bounded-domain result in \cite{LiuMaoXu2026DeterministicLS} covers only the order-zero case, that is, function approximation, and requires lifting the target to the sphere with a parity extension.

The main contributions of this work are as follows.
\begin{itemize}

\item We establish the inverse inequality \[ \|v\|_{\mathcal H_\gamma^\tau} \le C\max\{\delta_\pm^{-1},1+R\}^{\tau-\sigma} \|v\|_{\mathcal H_\gamma^\sigma}, \qquad 0\le\sigma\le\tau<k+1/2, \] for functions in shallow $\mathrm{ReLU}^k$ trial spaces. Here $\delta_\pm>0$ is the minimum parameter separation modulo antipodal identification, and $R$ controls the bias range. The constant $C$ is independent of $R$ and the network width. We obtain this estimate through harmonic analysis based on Hermite expansions \cite{PetrushevXu2008}. The polynomial dependence on the bias range allows us to enlarge the sampling region for Gaussian tail control with only logarithmic losses in the convergence rate. We also establish sharpness results for the regularity threshold and the dependence on parameter separation and bias magnitude.

\item Building on this inverse inequality, we prove convergence of weighted least squares approximations to linear PDEs on $\mathbb R^d$, using finitely many residual samples in an expanding ball. A coefficient constraint provides uniform Gaussian tail control. Under the stated assumptions, the weighted least squares approximation
$u_h$ satisfies
\[
    \|u-u_h\|_{\mathcal H_\gamma^S}
    \le
    C\bigl(\log(e/h)\bigr)^\chi
    h^{k+(d+1)/2-r}\,\delta_\pm^{-(S-r)},
\]
where $u$ is the exact solution, $h$ measures parameter coverage,
$r$ is the differential order of the PDE, and $\chi>0$ is a fixed
exponent. We construct parameter sets with $\delta_\pm\asymp h$,
yielding the convergence rate $J^{-(k+(d+1)/2-S)/d}$
up to logarithmic factors. The results cover deterministic and random residual sampling and give convergence on fixed bounded regions by restriction under suitable extension assumptions.

\item We prove a matching lower bound for a compactly supported Sobolev class, valid for every linear trial space of dimension at most $J$. This shows that the convergence exponent cannot be improved uniformly over this class by changing the prescribed inner parameters or the procedure used to compute the outer coefficients.

\item Numerical experiments assess convergence in Sobolev norms and reproduce the predicted growth of inverse constants as the inner parameters approach one another. They distinguish instability caused by insufficient residual sampling from linear dependence caused by antipodal configurations, which persists under oversampling.

\end{itemize}

The paper is organized as follows. In \autoref{sec:preliminaries}, we introduce the function spaces and network parameter geometry. We state the main results in \autoref{sec:main-results} and present numerical experiments in \autoref{sec:experiments}. The main results are proved in \autoref{sec:proofs}. Appendices~\ref{app:inverse-lemmas} and~\ref{app:convergence-lemmas} collect auxiliary results for the inverse inequalities and convergence analysis, respectively.

\section{Preliminaries}
\label{sec:preliminaries}

In this section, we introduce the function spaces, shallow $\mathrm{ReLU}^k$ networks, and parameter distances used in the inverse estimates.

Let $d\gamma(x)=\rho_d(x)\,dx$, where $\rho_d(x)=(2\pi)^{-d/2}e^{-|x|^2/2}$. In one dimension, write $d\gamma_1(z)=\rho_1(z)\,dz$. The normalized probabilists' Hermite polynomials \cite{Szego1975} are defined by
\[
  e^{tz-t^2/2}=\sum_{n=0}^{\infty}H_n(z)\frac{t^n}{\sqrt{n!}}.
\]
$\{H_n\}_{n\ge0}$ is an orthonormal basis of $L^2(\gamma_1)$. The tensor products $H_\alpha(x):=\prod_{\ell=1}^d H_{\alpha_\ell}(x_\ell)$, $\alpha\in\mathbb N_0^d$, form an orthonormal basis of $L^2(\gamma)$. For $f=\sum_\alpha\widehat f_\alpha H_\alpha$, define
\begin{equation}\label{eq:Hermite-Sobolev-definition} \|f\|_{\mathcal H_\gamma^s}^2 := \sum_{\alpha\in\mathbb N_0^d} (1+|\alpha|)^s|\widehat f_\alpha|^2, \qquad s\ge0. \end{equation}
For integers $m\ge0$,
\begin{equation} \label{eq:Hermite-derivative-norm-equivalence} \|f\|_{\mathcal H_\gamma^m} \asymp \left( \sum_{|\alpha|\le m} \int_{\mathbb R^d}|D^\alpha f(x)|^2\,d\gamma(x) \right)^{1/2}. \end{equation}
We write $L_\gamma^2(E)=L^2(E,\gamma)$ and $\|f\|_{H_\gamma^m(E)}^2:=\sum_{|\alpha|\le m}\int_E|D^\alpha f(x)|^2\,d\gamma(x)$ for measurable $E\subset\mathbb R^d$. In these Gaussian norms, an omitted domain means $\mathbb R^d$. Write $B_R:=\{x\in\mathbb R^d:|x|<R\}$. The unweighted space $H^t(B_R)$, $t\ge0$, carries the restriction norm induced by $H^t(\mathbb R^d)$, equivalent to the usual derivative norm for integer $t$.

For an integer $k\ge1$, set $\phi_{a,b}(x):=(a\cdot x-b)_+^k$, $(a,b)\in\mathbb S^{d-1}\times\mathbb R$. A neuron $(w\cdot x+\beta)_+^k$ with $w\neq0$ takes this form with $a=w/|w|$, $b=-\beta/|w|$, and the factor $|w|^k$ absorbed into its outer coefficient. Fix $C_b>2$ and $R\ge1$, and set $\Theta_R:=\mathbb S^{d-1}\times[-C_b(1+R),C_b(1+R)]$.
For $\theta=(a,b)$ and $\theta'=(a',b')$ in $\Theta_R$, let
\begin{equation} \label{eq:parameter-metrics} \begin{aligned} d_0(\theta,\theta') &:= \rho_{\mathbb S^{d-1}}(a,a')+|b-b'|,\qquad d_\pi(\theta,\theta') := \rho_{\mathbb S^{d-1}}(a,-a')+|b+b'|,\\ d_\pm(\theta,\theta') &:= \min\{d_0(\theta,\theta'),d_\pi(\theta,\theta')\}. \end{aligned} \end{equation}
Here $\rho_{\mathbb S^{d-1}}(a,a'):=\arccos(a\cdot a')$; this agrees with geodesic distance for $d\ge2$ and assigns distance $\pi$ to the two points of $\mathbb S^0$. For a finite dictionary $\Xi=\{\theta_j\}_{j=1}^J\subset\Theta_R$, set $V_\Xi:=\operatorname{span}\{\phi_\theta:\theta\in\Xi\}$ and define
\begin{equation} \label{eq:parameter-geometries} \delta(\Xi):=\min_{i\ne j}d_0(\theta_i,\theta_j), \qquad \delta_\pm(\Xi):=\min_{i\ne j}d_\pm(\theta_i,\theta_j). \end{equation}
These are the ordinary and antipodal separation distances, computed with the unscaled bias. Both are $+\infty$ when $\#\Xi\le1$. When the dictionary is clear, we abbreviate $\delta_\pm(\Xi)$ to $\delta_\pm$.

In estimates, $c,C>0$ may change between occurrences.

\section{Main Results}
\label{sec:main-results}

In this section, we state the inverse inequality, the resulting convergence estimates, and a matching lower bound for fixed linear trial spaces.

\subsection{Sharp inverse inequalities}

\begin{theorem}
\label{thm:inverse}
Let $R\ge1$ and $\Xi\subset\Theta_R$ with $\delta_\pm(\Xi)>0$. Then, for any real numbers $0\le\sigma\le\tau<k+1/2$, there exists a constant $C$, depending only on $d,k,\sigma,\tau,C_b$, such that
\begin{equation}
\label{eq:scale-inverse}
  \|v\|_{\mathcal H_\gamma^\tau}
  \le
  C\max\{\delta_\pm^{-1},1+R\}^{\tau-\sigma}
  \|v\|_{\mathcal H_\gamma^\sigma},
  \qquad
  v\in V_\Xi.
\end{equation}
\end{theorem}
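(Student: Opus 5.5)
The plan is to prove \eqref{eq:scale-inverse} by a Bernstein-type argument at a single Hermite frequency level
\[
N:=\Lambda^2,\qquad \Lambda:=A\max\{\delta_\pm^{-1},1+R\},
\]
with $A$ a large constant. Let $L$ be the Hermite operator, so that $LH_\alpha=|\alpha|H_\alpha$, and fix smooth cutoffs $\eta,\eta'$ on $[0,\infty)$ equal to $1$ near $0$. Split $v=\eta(L/N)v+(1-\eta(L/N))v$.

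The low-frequency part is immediate from the definition \eqref{eq:Hermite-Sobolev-definition}:
\[
\|\eta(L/N)v\|_{\mathcal H_\gamma^\tau}\le CN^{(\tau-\sigma)/2}\|v\|_{\mathcal H_\gamma^\sigma}=C\Lambda^{\tau-\sigma}\|v\|_{\mathcal H_\gamma^\sigma}.
\]
So everything reduces to the high-frequency part. I will control it through the coefficients of $v=\sum_j c_j\phi_{\theta_j}$, namely prove the two-sided estimate
\begin{equation*}
\|(1-\eta(L/N))v\|_{\mathcal H_\gamma^\tau}^2\le CN^{\tau-k-1/2}\sum_j w_j^2|c_j|^2,
\qquad
\sum_j w_j^2|c_j|^2\le CN^{k+1/2-\sigma}\|v\|_{\mathcal H_\gamma^\sigma}^2 ,
\end{equation*}
where $w_j$ is a Gaussian bias weight, of the form $w_j\asymp e^{-b_j^2/4}$ up to polynomial factors. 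Multiplying the two gives $\|(1-\eta(L/N))v\|_{\mathcal H_\gamma^\tau}\le C\Lambda^{\tau-\sigma}\|v\|_{\mathcal H_\gamma^\sigma}$, which finishes the proof.

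\emph{Step 1 (single neuron).} Since $\phi_{a,b}(x)=g_b(a\cdot x)$ with $g_b(z)=(z-b)_+^k$, and $H_n(a\cdot x)=\sum_{|\alpha|=n}\sqrt{n!/\alpha!}\,a^\alpha H_\alpha(x)$, the $d$-dimensional Hermite expansion of $\phi_{a,b}$ is governed by the one-dimensional coefficients $\widehat{g_b}(n)$. Using the generating function (equivalently, $k+1$ integrations by parts against $\rho_1$), I will obtain an asymptotic of the form
\[
\widehat{g_b}(n)\approx n^{-k/2-3/4}\,\rho_1(b)^{1/2}\cos\bigl(\sqrt n\, b-\tfrac{\pi}{2}(n+k)\bigr)
\]
(up to explicit constants), valid uniformly for $|b|\le C_b(1+R)\le c\sqrt n$. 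Summing $n^{\tau}|\widehat{g_b}(n)|^2$ over $n\gtrsim N$ then gives $N^{\tau-k-1/2}w^2$. This is finite exactly when $\tau<k+1/2$, which explains the threshold. The requirement $N\gtrsim(1+R)^2$ keeps every bias inside the oscillatory region of the Hermite functions, and this is where the factor $1+R$ in the statement comes from.

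\emph{Step 2 (off-diagonal decay, the main obstacle).} I will show that the Gram entries
\[
G_{ij}=\bigl\langle(1-\eta)\phi_{\theta_i},(1-\eta)\phi_{\theta_j}\bigr\rangle_{\mathcal H_\gamma^\tau}
\]
satisfy $|G_{ij}|\le CN^{\tau-k-1/2}w_iw_j(1+\sqrt N\,d_\pm(\theta_i,\theta_j))^{-M}$ for some $M>d$. Two sources of decay enter.
\begin{itemize}
\item The inner product of the ridge components $H_n(a\cdot x)$ and $H_m(a'\cdot x)$ vanishes unless $n=m$, in which case it equals $(a\cdot a')^n$. This produces decay in $\sqrt N\rho(a,\pm a')$ for $n\sim N$.
\item The oscillating factors $\cos(\sqrt n\,b)$ produce decay in $\sqrt N|b\mp b'|$ after summation by parts over $n$ against the smooth cutoff.
\end{itemize}
The antipodal case is handled by the identity $(z-b)_+^k-(-1)^k(-z+b)_+^k=(z-b)^k$. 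The right-hand side is a polynomial of degree $k$, which $1-\eta(L/N)$ annihilates, so near-antipodal pairs behave like near-coincident ones, and the metric is $d_\pm$ rather than $d_0$. I expect this to be the technically hardest step, because the decay must be uniform in the bias up to $C_b(1+R)$.

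\emph{Step 3 (Schur test and lower bound).} Because $\Xi$ is $\delta_\pm$-separated and $\sqrt N\,\delta_\pm\ge A$, a packing count in $\Theta_R$ makes $\sup_i\sum_{j\ne i}(1+\sqrt N d_\pm)^{-M}$ as small as desired once $A$ is large. The Schur test then yields the first inequality. For the second inequality I apply the same computation to the $\mathcal H_\gamma^\sigma$-inner products of $(1-\eta'(L/N))\phi_{\theta_j}$. The lower bound on the diagonal from Step 1, namely $\|(1-\eta')\phi_{\theta_j}\|_{\mathcal H_\gamma^\sigma}^2\ge cN^{\sigma-k-1/2}w_j^2$, together with the small off-diagonal mass, gives diagonal dominance. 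Combined with $\|v\|_{\mathcal H_\gamma^\sigma}\ge\|(1-\eta'(L/N))v\|_{\mathcal H_\gamma^\sigma}$, this produces the second inequality.
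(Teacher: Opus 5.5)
Your proposal is correct and follows the paper's architecture. Both arguments use the Hermite Gram matrix of the neurons, normalized by the bias weights $w_j=\rho_1(b_j)^{1/2}$ (the paper's matrix $D$). Both rely on an off-diagonal bound $N^{-k-1/2}(1+\sqrt N\,d_\pm)^{-L}$ at frequency $N$ and a diagonal lower bound once $N\gtrsim(1+R)^2$. Both use a packing count that makes off-diagonal row sums small once $\sqrt N\,\delta_\pm\gtrsim1$, and a trivial low-frequency bound.

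Where you use one high-pass cutoff with a Schur test and a Gershgorin lower bound, the paper uses dyadic blocks $\widetilde M_N$. It takes $\widetilde K_\sigma\succeq cN_*^{\sigma-k-1/2}I$ from the single block at $N_*$ and sums $\|\widetilde K_{\tau,N}\|\le CN^{\tau-k-1/2}$ over $N>N_*$. The two are equivalent; your full high-pass off-diagonal bound follows from the dyadic one by a geometric sum, since $\tau<k+1/2$.

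The real difference is your Step~2. The paper avoids asymptotics altogether. Rodrigues' formula and $k+1$ integrations by parts give the exact identity $\eta_n(b)=k!\,\rho_1(b)\sqrt{(n-k-1)!/n!}\,H_{n-k-1}(b)$. Each block kernel then becomes $\sum_m\mu(m/N)\psi_m(b)\psi_m(b_*)$ with $b_*=\operatorname{sgn}(a\cdot a')\,b'$; here Hermite parity plays the role of your antipodal identity. The factors $m!/(m+k+1)!$ and $|a\cdot a'|^m$ are absorbed into a smooth multiplier whose derivatives are $O((1+\sqrt N\varphi)^{-L})$. The Petrushev--Xu localization theorem then gives decay in the angle and in $|b-b_*|$ simultaneously, uniformly for $|b|\le C_b(1+R)$.

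Your asymptotics-plus-summation-by-parts route can be made to work, but not as literally stated. First, the phase $\sqrt n\,b$ is not uniformly valid. The Plancherel--Rotach phase carries a correction of size $b^3/\sqrt n$, which can reach $C_b^3(1+R)^2/A$ when $|b|\sim R$ and $n\sim A^2R^2$, and so is unbounded in $R$. The mechanism survives, because the $n$-difference of the correct phase is still $\asymp|b-b'|/\sqrt n$. Second, decay of order $M>d$ requires carrying the expansion to correspondingly high order, with remainders whose differences in $n$ are controlled. That is exactly what the cited localization theorem packages.

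Finally, the sign in your identity should read $(z-b)_+^k+(-1)^k(b-z)_+^k=(z-b)^k$; this does not affect the argument.
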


\begin{theorem}
\label{thm:inverse-sharpness}
The inverse inequality in \autoref{thm:inverse} is sharp as follows.
\begin{enumerate}
\item[\textup{(i)}] For every $\theta\in\mathbb S^{d-1}\times\mathbb R$ and $t\ge0$, $\phi_\theta\in\mathcal H_\gamma^t$ if and only if $t<k+1/2$.
\item[\textup{(ii)}] There is $\delta_0=\delta_0(k,C_b)>0$ such that, for $R\ge1$ and $0<\delta\le\delta_0$, one can choose a deterministic $\Xi_\delta\subset\Theta_R$ and $v_\delta\in V_{\Xi_\delta}$ with $\delta_\pm(\Xi_\delta)=\delta(\Xi_\delta)=\delta$. For all sufficiently large $R$, one can also choose a singleton $\Xi_R\subset\Theta_R$ and a $\mathrm{ReLU}^k$ neuron $v_R\in V_{\Xi_R}$. These functions satisfy
\begin{equation}
\label{eq:geometric-sharpness-all-orders}
  \frac{\|v_\delta\|_{\mathcal H_\gamma^\tau}}
       {\|v_\delta\|_{\mathcal H_\gamma^\sigma}}
  \asymp \delta^{-(\tau-\sigma)},
  \qquad
  \frac{\|v_R\|_{\mathcal H_\gamma^\tau}}
       {\|v_R\|_{\mathcal H_\gamma^\sigma}}
  \asymp (1+R)^{\tau-\sigma}.
\end{equation}
Both estimates hold simultaneously for all integers $0\le\sigma\le\tau\le k$. The $d_\pi$ distances between distinct parameters in $\Xi_\delta$ remain uniformly bounded away from zero.
\end{enumerate}
\end{theorem}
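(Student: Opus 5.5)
For (i), rotational invariance of $\gamma$ and of the Hermite–Sobolev norm \eqref{eq:Hermite-Sobolev-definition} (the Ornstein–Uhlenbeck operator commutes with rotations) lets us assume $a=e_1$. Then $\phi_\theta(x)=(x_1-b)_+^k$ depends on one variable, and its $d$-dimensional Hermite coefficients vanish unless $\alpha=(n,0,\dots,0)$. So $\|\phi_\theta\|_{\mathcal H_\gamma^t}^2\asymp\sum_n (1+n)^t|c_n(b)|^2$ with $c_n(b)=\int(z-b)_+^kH_n\,d\gamma_1$. We compute $c_n$ via the Rodrigues formula $H_n\rho_1=(-1)^n(n!)^{-1/2}\rho_1^{(n)}$ and integrate by parts $k+1$ times. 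The derivative $\frac{d^{k+1}}{dz^{k+1}}(z-b)_+^k=k!\,\delta_b$ gives, for $n\ge k+1$,
\[
c_n(b)=\pm k!\,(n!)^{-1/2}\rho_1^{(n-k-1)}(b)=\pm k!\sqrt{(n-k-1)!/n!}\,H_{n-k-1}(b)\rho_1(b).
\]
The Plancherel–Rotach (or Szegő) asymptotics give $H_m(b)^2\rho_1(b)\asymp m^{-1/2}$ on average for fixed $b$. A more robust route is to average against a short sum in $m$ using Christoffel-function bounds, which yields $\sum_{m\sim M}H_m(b)^2\asymp M^{1/2}$ uniformly for $|b|\le C$. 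Hence the dyadic block $n\sim 2^j$ contributes $\asymp 2^{jt}2^{-j(k+1)}2^{j/2}$, and the sum converges if and only if $t<k+1/2$.

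For (ii), the $R$-part takes a single neuron $v_R=(x_1-b)_+^k$ with $b=R$, or $b$ of order $R$ inside $[-C_b(1+R),C_b(1+R)]$. For integer orders we use \eqref{eq:Hermite-derivative-norm-equivalence}. We have $\|D^jv_R\|_{L^2_\gamma}^2\asymp\int_b^\infty (z-b)^{2(k-j)}\rho_1(z)\,dz$, and by Laplace asymptotics of the Gaussian tail this is $\asymp b^{-2(k-j)-1}\rho_1(b)$. Consecutive derivatives therefore gain a factor $\asymp b$. The $\mathcal H^\tau_\gamma$ norm is dominated by its top-order term, giving ratio $\asymp(1+R)^{\tau-\sigma}$. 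The $\delta$-part takes $k+1$ parallel neurons with biases $b_0,b_0+\delta,\dots,b_0+k\delta$, $a=e_1$, $b_0=0$ say, and $v_\delta$ equal to the $(k+1)$-st divided difference times $\delta^{k}$ (or a normalization). Thus $v_\delta=\delta^{-?}\Delta_\delta^{k+1}(z-\cdot)_+^k$, which is a B-spline-like bump $\delta^{k}$ times a cardinal B-spline $M_{k+1}((z-b_0)/\delta)$, supported on an interval of length $(k+1)\delta$. All pairwise distances are multiples of $\delta$ and equal $\delta$ for neighbours. The $d_\pi$ distances between $(e_1,b)$ and $(e_1,b')$ are $\pi+|b+b'|\ge\pi$, so $\delta_\pm=\delta$ for $\delta\le\delta_0$, and $d_\pi$ stays bounded away from zero. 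Since the B-spline $M$ is $C^{k-1}$ with piecewise polynomial $k$-th derivative, for integers $j\le k$ we get $\|D^jv_\delta\|_{L^2_\gamma}^2\asymp\delta^{2k}\delta^{-2j}\delta$, the Gaussian weight being $\asymp1$ near $0$. The highest order dominates, and the ratio is $\asymp\delta^{-(\tau-\sigma)}$.

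The main obstacle is the lower bound in (i), i.e.\ proving $\phi_\theta\notin\mathcal H^{k+1/2}_\gamma$ uniformly. The upper bound follows from $\sum_{m\le M}H_m(b)^2\rho_1(b)\lesssim M^{1/2}$, which is the Christoffel function bound. The lower bound instead needs the matching estimate $\sum_{m\le M}H_m(b)^2\rho_1(b)\gtrsim M^{1/2}$ for fixed $b$, which I would take from the known asymptotics of the Christoffel function $\lambda_M(b)\asymp M^{-1/2}\rho_1(b)^{-1}$ in the bulk. With this in hand, Abel summation of $\sum(1+n)^{t-k-1}H_{n-k-1}(b)^2\rho_1(b)$ diverges at $t=k+1/2$.
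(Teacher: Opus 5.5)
Your proposal is correct in substance, and part (ii) is the paper's argument. You use the $(k+1)$-st finite difference $\delta^kG(a\cdot x/\delta)$, with $G$ a multiple of the degree-$k$ cardinal B-spline, and the single neuron $(x_1-R)_+^k$. Both are measured at integer orders through \eqref{eq:Hermite-derivative-norm-equivalence}.

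In part (i), your coefficient formula is \autoref{lem:eta-formula}. Reducing to $a=e_1$ by orthogonal invariance of the Ornstein--Uhlenbeck eigenspaces is equivalent to the paper's multinomial identity $\sum_{|\alpha|=n}|\langle\phi_\theta,H_\alpha\rangle_{L^2(\gamma)}|^2=|\eta_n(b)|^2$.

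The genuine difference is how you prove divergence at $t=k+1/2$. The paper uses the band estimate $\sum_{N\le m\le 2N}\psi_m(b)^2\ge cN^{1/2}$ of \autoref{lem:Hermite-band-lower}. It obtains this from a Petrushev--Xu lemma that bounds the kernel mass in a frequency band below by a lower-degree Christoffel kernel, combined with the Christoffel lower bound. On the disjoint bands $[3^jN_0,2\cdot 3^jN_0]$, each band then contributes at least $c\rho_1(b)$.

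Your first paragraph asserts the same band bound ``from Christoffel-function bounds,'' but its lower half does not follow that way. Two-sided bounds $K_M(b,b)\asymp M^{1/2}$ with unrelated constants say nothing about $K_{2M}(b,b)-K_M(b,b)$. Your closing Abel-summation argument avoids this problem. With $w_m=(1+m)^{-1/2}$ and $S_M=\sum_{m\le M}\psi_m(b)^2$, nonnegativity gives $\sum_{m\le M}w_m\psi_m(b)^2\ge\sum_{m<M}(w_m-w_{m+1})S_m\gtrsim\sum_{M_0\le m<M}m^{-1}\to\infty$. This uses only $S_m\gtrsim m^{1/2}$ for fixed $b$. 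The true weight $(1+n)^{k+1/2}m!/(m+k+1)!$, with $n=m+k+1$, is termwise comparable to $w_m$. So your route needs strictly less Hermite asymptotics.

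In the paper, the stronger band lemma comes at no extra cost. It holds uniformly for $|b|\le C_b(1+R)$ once $N\gtrsim(1+R)^2$. That uniform version is needed anyway for the diagonal lower bound in \autoref{lem:normalized-block-localization}, on which \autoref{thm:inverse} rests.

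There is one counting slip to fix in (ii). You take $k+1$ neurons with biases $b_0,\dots,b_0+k\delta$, but a $(k+1)$-st difference needs the $k+2$ biases $b_0+\ell\delta$, $0\le\ell\le k+1$. The paper takes $b_0=0$ and $\delta_0=1/(k+1)$, so all biases lie in $[0,1]$. With only $k+1$ neurons you get the $k$-th difference, which equals $k!\,\delta^k$ wherever $a\cdot x\ge b_0+k\delta$. That function is not compactly supported, so $\|v_\delta\|_{L_\gamma^2}\asymp\delta^k$ rather than $\delta^{k+1/2}$. For $\sigma=0<\tau$ the ratio is then only $\delta^{-(\tau-1/2)}$. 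Your stated support length $(k+1)\delta$ shows you intended the correct construction.

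Also, with $\lambda_M(b)=(\sum_{m\le M}H_m(b)^2)^{-1}$ one has $\lambda_M(b)\asymp M^{-1/2}\rho_1(b)$, not $M^{-1/2}\rho_1(b)^{-1}$. This is harmless for fixed $b$.
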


The proof in \autoref{subsec:inverse-parameters} compares dyadic Hermite Gram matrices using localized kernel estimates \cite{PetrushevXu2008}. Hermite spectral sums determine the regularity threshold, while finite differences and neurons with large biases establish sharpness of the two geometric powers.

\subsection{Weighted least squares convergence on \texorpdfstring{$\mathbb R^d$}{Rd}}

In this subsection, we use the inverse inequality to establish convergence of finite residual least squares for deterministic and random sampling, with consequences on fixed bounded regions.

Consider $\mathcal Au=f$ on $\mathbb R^d$, where $\mathcal A=\sum_{|\alpha|\le r}a_\alpha D^\alpha$ has order $r$.

Let $s_k=(d+2k+1)/2$. For $\Xi\subset\Theta_R$, define its fill distance relative to $\Theta_R$ by \[ h(\Xi):=\sup_{\theta\in\Theta_R}\min_{\xi\in\Xi}d_0(\theta,\xi). \] We use families $\{\Xi_{h,R}\}$ that are quasiuniform with respect to $d_0$, with uniform constants satisfying
\begin{equation}
\label{eq:ordinary-parameter-geometry}
  h(\Xi_{h,R})\asymp h,
  \qquad
  \delta(\Xi_{h,R})\asymp h,
  \qquad
  \#\Xi_{h,R}\asymp(1+R)h^{-d}.
\end{equation}

For deterministic residual samples $X=\{x_i\}_{i=1}^N\subset B_R$, define
\begin{equation} \label{eq:physical-sampling-geometry} h_X:=\sup_{x\in B_R}\min_i|x-x_i|, \qquad q_X:=\frac12\min_{i\ne j}|x_i-x_j|. \end{equation}
Assume $h_X\asymp q_X\asymp H$ and choose positive weights satisfying
\begin{equation} \label{eq:weight-condition} c_\omega H^d\rho_d(x_i) \le \omega_i \le C_\omega H^d\rho_d(x_i), \qquad \|g\|_{X,\omega}^2 := \sum_{i=1}^N\omega_i|g(x_i)|^2. \end{equation}

To control Gaussian tails outside $B_R$, set, for $M>0$,
\[ V_\Xi(M):=\left\{v=\sum_{\theta_j\in\Xi}c_j\phi_{\theta_j}:\sum_j|c_j|\le M\right\}. \]
The coefficient representation need not be unique; one admissible representation suffices. The weighted least squares approximation is
\begin{equation} \label{eq:constrained-LS-problem} u_h \in \operatorname*{arg\,min}_{v\in V_\Xi(M)} \|\mathcal Av-f\|_{X,\omega}^2. \end{equation}
A minimizer exists because the coefficient ball is compact and the residual objective is continuous.

We impose the following continuous stability and local growth assumptions.

\begin{assumption}
\label{ass:apriori}
Let $0\le r\le S<k$ be integers. Assume that the equation $\mathcal Au=f$ is uniquely solvable in the target solution class and that there exists a constant $C$, independent of $R$, the parameter set, and the network width, such that
\begin{equation*}
  \|w\|_{\mathcal H_\gamma^S}
  \le
  C
  \|\mathcal Aw\|_{\mathcal H_\gamma^{S-r}},
  \qquad
  w\in V_\Xi,
\end{equation*}
for every $R\ge1$ and every finite $\Xi\subset\Theta_R$.
\end{assumption}

\begin{assumption}
\label{ass:solution-local-growth}
Assume that there exist fixed exponents $p_1,p_2\ge0$ such that, for every $R\ge1$, $u|_{B_R}\in H^{s_k}(B_R)$ and
\begin{equation}
\label{eq:local-growth-u}
  \|u\|_{H^{s_k}(B_R)}
  \le
  C(1+R)^{p_1},
\end{equation}
and
\begin{equation*}
  \|D^\beta a_\alpha\|_{L^\infty(B_R)}
  \le
  C(1+R)^{p_2},
  \qquad
  |\alpha|\le r,\quad |\beta|\le k-r+1.
\end{equation*}
\end{assumption}

For $2\le S<k$, \autoref{ass:apriori} holds with $r=2$ for $\mathcal A=-\Delta+x\cdot\nabla+\lambda+b\cdot\nabla+c$ if $b,c$ and their derivatives through order $k-1$ are bounded and $\lambda>0$ is sufficiently large. This follows by a Neumann series perturbation of the shifted Ornstein--Uhlenbeck resolvent, and the coefficient bounds in \autoref{ass:solution-local-growth} hold with $p_2=1$. The unperturbed operator is also related to the Fokker--Planck equation $\partial_t p=\Delta p+\nabla\cdot(xp)$: writing $p=\rho_dq$ gives $\partial_t q=\Delta q-x\cdot\nabla q$. Thus an implicit Euler step for $q$ leads to a shifted Ornstein--Uhlenbeck equation; see \cite{DujardinHerauLafitte2020,FokGuoTang2002} for implicit discretizations of Fokker--Planck equations.

\begin{theorem}
\label{thm:convergence}
Let $0\le r\le S<k$ be integers with $k-r>(d-1)/2$, and suppose that \autoref{ass:apriori} and \autoref{ass:solution-local-growth} hold. Choose fixed $p_3,p_4>0$ and $\kappa>0$ such that
\[
 p_3(k-S+1/4)>p_2/2,\qquad
 p_4>(p_1+1/2)/2,\qquad
 \kappa^2>4(s_k-S),
\]
and set $R=R(h):=\kappa\sqrt{\log(e/h)}$ and $M_h=(\log(e/h))^{p_4}$.

Let $\Xi_{h,R}\subset\Theta_R$ satisfy \eqref{eq:ordinary-parameter-geometry}, with $\delta_\pm:=\delta_\pm(\Xi_{h,R})>0$. Choose $H\asymp\delta_\pm(\log(e/h))^{-p_3}$ and deterministic residual samples $X\subset B_R$ satisfying $h_X\asymp q_X\asymp H$, with $h_X,q_X$ defined in \eqref{eq:physical-sampling-geometry} and weights satisfying \eqref{eq:weight-condition}. For sufficiently small $h$, every minimizer in \eqref{eq:constrained-LS-problem}, with $\Xi=\Xi_{h,R}$ and $M=M_h$, satisfies
\begin{equation}
\label{eq:two-scale-convergence-rate}
  \|u-u_h\|_{\mathcal H_\gamma^S}
  \le
  C(\log(e/h))^{\chi}
  h^{s_k-r}\delta_\pm^{-(S-r)}.
\end{equation}
Here $\chi=p_3(S-r)+(p_1+p_2+s_k-r)/2$. The constant $C$ and the smallness threshold for $h$ depend only on $d,k,r,S,C_b$, $p_1,p_2,p_3,p_4,\kappa$, the constants in the two assumptions, and the fixed constants in the parameter geometry, sampling geometry, scale comparison for $H$, and weight bounds. In particular, $C$ is independent of $h,\delta_\pm,H,R,J,N$.

If, in addition, $\delta_\pm\gtrsim h$, then
\begin{equation}
\label{eq:width-rate}
  \|u-u_h\|_{\mathcal H_\gamma^S}
  \le
  C(1+\log J)^{\widetilde{\chi}}
  J^{-(s_k-S)/d},
\end{equation}
where $\widetilde{\chi}=\chi+(s_k-S)/(2d)$; the constant may additionally depend on the fixed lower bound for $\delta_\pm/h$. The shifted product dictionary in \autoref{lem:shifted-product-parameters} satisfies \eqref{eq:ordinary-parameter-geometry} and $\delta_\pm\asymp h$.
\end{theorem}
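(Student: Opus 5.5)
The plan is the standard ``approximation + discrete stability'' argument. By \autoref{ass:apriori}, the continuous residual controls the error within the trial space. By \autoref{thm:inverse} and a sampling inequality, the continuous residual is controlled by the discrete weighted residual. Fix a quasi-best approximant $v^*\in V_{\Xi}(M_h)$ of $u$. Split $u-u_h=(u-v^*)+(v^*-u_h)$. Since $w:=v^*-u_h\in V_\Xi$, \autoref{ass:apriori} gives
\[
\|w\|_{\mathcal H_\gamma^S}\le C\|\mathcal Aw\|_{\mathcal H_\gamma^{S-r}}.
\]

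\textbf{Step 1: the approximant.} We take $v^*$ from the integral representation of $u$ on $B_R$ (as in \cite{LiuMaoXu2025Integral}), localized by a smooth cutoff adapted to $B_{R}$. We discretize over $\Xi_{h,R}$ with fill distance $h$. This should yield:
\begin{itemize}
\item $\|u-v^*\|_{H^{S}(B_{R})}\lesssim (1+R)^{p_1+1/2}h^{s_k-S}$, using \eqref{eq:local-growth-u};
\item coefficient $\ell^1$ norm $\lesssim(1+R)^{p_1+1/2}\le M_h$ for small $h$, which is where $p_4$ enters;
\item $\|u-v^*\|_{\mathcal H^S_\gamma}$ and $\|\mathcal A(u-v^*)\|$ at sample points bounded by $h^{s_k-r}$ times logarithmic factors.
\end{itemize}
The tails outside $B_R$ are handled using $|\phi_\theta(x)|\lesssim(1+|x|+R)^k$ together with the $\ell^1$ bound. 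The Gaussian tail $e^{-R^2/2}\sim h^{\kappa^2/2}$ is $\le h^{2(s_k-S)}$ by $\kappa^2>4(s_k-S)$, so the tails are negligible. The condition $k-r>(d-1)/2$ ensures $s_k-r>d/2$, hence $\mathcal A(u-v^*)$ is continuous and point samples make sense.

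\textbf{Step 2: discrete-to-continuous bound for $w$.} Split $\|\mathcal Aw\|_{\mathcal H_\gamma^{S-r}}$ into the parts on $B_R$ and outside.
\begin{itemize}
\item \emph{Outside $B_R$:} the $\ell^1$ bound $2M_h$ on $w$'s coefficients and the Gaussian weight make this part superpolynomially small in $h$.
\item \emph{Inside $B_R$:} apply a sampling inequality on $B_R$ (\cite{NarcowichWardWendland2005,WendlandRieger2005}), locally on balls of radius $\sim H$ where $\rho_d$ is comparable up to factors absorbed into \eqref{eq:weight-condition}. Roughly,
\[
\|g\|_{L^2_\gamma(B_R)}\lesssim H^{m}\|g\|_{H^m_\gamma(B_R)}+\|g\|_{X,\omega}.
\]
With derivatives up to order $S-r$, this gives
\[
\|\mathcal Aw\|_{H^{S-r}_\gamma(B_R)}\lesssim H^{t}\|\mathcal Aw\|_{H^{S-r+t}_\gamma}+H^{-(S-r)}\|\mathcal Aw\|_{X,\omega}.
\]
The coefficient growth $(1+R)^{p_2}$ gives
\[
\|\mathcal Aw\|_{H^{S-r+t}_\gamma(B_R)}\lesssim (1+R)^{p_2}\|w\|_{\mathcal H^{S+t}_\gamma}.
\]
Then \autoref{thm:inverse}, with $\tau=S+t<k+1/2$ (e.g.\ $t=k-S+1/4$), bounds this by $(1+R)^{p_2}\delta_\pm^{-t}\|w\|_{\mathcal H^S_\gamma}$, using $\delta_\pm^{-1}\gtrsim 1+R$.
\end{itemize}
Since $H\asymp\delta_\pm(\log)^{-p_3}$, the factor $H^t\delta_\pm^{-t}(1+R)^{p_2}\sim(\log)^{-p_3t+p_2/2}\to0$ by the choice of $p_3$. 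This term is absorbed into the left side, leaving
\[
\|w\|_{\mathcal H^S_\gamma}\lesssim H^{-(S-r)}\|\mathcal Aw\|_{X,\omega}.
\]

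\textbf{Step 3: conclusion.} By minimality and feasibility of $v^*$,
\[
\|\mathcal Aw\|_{X,\omega}\le\|\mathcal Av^*-f\|_{X,\omega}+\|\mathcal Au_h-f\|_{X,\omega}\le 2\|\mathcal A(v^*-u)\|_{X,\omega}.
\]
The weighted discrete norm of a function is bounded by its local $H^{m}_\gamma(B_R)$ norm with $m>d/2$, which is a discrete Sobolev embedding since the samples are quasi-uniform. This gives the bound $(\log)^{(p_1+p_2+s_k-r)/2}h^{s_k-r}$ from Step 1. Multiplying by $H^{-(S-r)}\sim\delta_\pm^{-(S-r)}(\log)^{p_3(S-r)}$ yields \eqref{eq:two-scale-convergence-rate} with the stated $\chi$; the term $u-v^*$ is of lower order. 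For \eqref{eq:width-rate}, substitute $\delta_\pm\asymp h$ and $J\asymp(1+R)h^{-d}$, so that $h\asymp (J/\sqrt{\log J})^{-1/d}$.

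\textbf{Main obstacle.} I expect the hardest step to be Step 1. It requires a localized approximation on the growing ball with simultaneous control of the $\ell^1$ coefficient norm and the residual in a pointwise and sampled sense, with explicit polynomial $R$-dependence. The approach I would try first is to multiply $u$ by a cutoff, extend it to $H^{s_k}(\mathbb R^d)$, rescale to the unit ball, and apply the known optimal-rate representation there, tracking the powers of $R$ through the scaling.
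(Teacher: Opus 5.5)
Your plan is essentially the paper's proof. It has the same ingredients: the approximant of Lemma~\ref{lem:convergence-approximant} with $\ell^1$ bound $C(1+R)^{p_1+1/2}=o(M_h)$; minimality; the lower sampling bound at order $k-r+1/4$ combined with Theorem~\ref{thm:inverse} at $\tau=k+1/4$ and absorbed through the $p_3$ condition (Lemma~\ref{lem:fractional-data-norm-transfer}); and Gaussian exterior terms of size about $h^{\kappa^2/4}$, which are polynomially rather than superpolynomially small, hence the condition $\kappa^2>4(s_k-S)$. Two corrections: the hypothesis $k-r>(d-1)/2$ is not used for $s_k-r>d/2$ (automatic, and beside the point since network functions lie only in $H^t$ for $t<k+1/2$) but to get $k-r+1/4>d/2$, which makes network residuals continuous and both sampling inequalities valid at order $k-r+1/4$; and the upper bound must be the scaled form $\|g\|_{X,\omega}\lesssim\|g\|_{L_\gamma^2(B_R)}+H^{k-r+1/4}\|g\|_{k-r+1/4,H}$, combined with approximation at $q=r$ and $q=k+1/4$, since an unscaled discrete embedding would cost a factor $h^{-(k-r+1/4)}$.
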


The following corollary gives convergence on fixed bounded regions for targets admitting a suitable extension to $\mathbb R^d$.

\begin{corollary}
\label{cor:fixed-region-restriction}
Let $\Omega\subset\mathbb R^d$ be a fixed bounded open set, with $H^S(\Omega)$ equipped with its usual derivative norm, and let $u^\Omega$ be a target on $\Omega$. Assume that there exist a solution $u$ on $\mathbb R^d$ and associated data and discretizations satisfying all hypotheses of \autoref{thm:convergence}, with $u|_\Omega=u^\Omega$. For any minimizer $u_h$ in \eqref{eq:constrained-LS-problem}, set $u_h^\Omega:=u_h|_\Omega$. Then
\begin{equation}
\label{eq:fixed-region-two-scale-rate}
  \|u^\Omega-u_h^\Omega\|_{H^S(\Omega)}
  \le C_\Omega(\log(e/h))^\chi
  h^{s_k-r}\delta_\pm^{-(S-r)}.
\end{equation}
If, in addition, $\delta_\pm\gtrsim h$, then
\begin{equation}
\label{eq:fixed-region-one-scale-rates}
  \|u^\Omega-u_h^\Omega\|_{H^S(\Omega)}
  \le C_\Omega(1+\log J)^{\widetilde{\chi}}J^{-(s_k-S)/d}.
\end{equation}
The constant $C_\Omega$ is independent of $h,\delta_\pm,H,R,J,N$.
\end{corollary}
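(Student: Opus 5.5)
The plan is to deduce the corollary directly from \autoref{thm:convergence} by restriction, using that the Gaussian density is bounded below on bounded sets. Since $\Omega$ is bounded, we fix $R_\Omega$ with $\Omega\subset B_{R_\Omega}$. On $B_{R_\Omega}$ we have $\rho_d(x)\ge(2\pi)^{-d/2}e^{-R_\Omega^2/2}=:c_\Omega>0$. Hence, for any $w$ whose derivatives up to order $S$ lie in $L^2_\gamma$,
\[
  \|w\|_{H^S(\Omega)}^2=\sum_{|\alpha|\le S}\int_\Omega|D^\alpha w|^2\,dx
  \le c_\Omega^{-1}\sum_{|\alpha|\le S}\int_{\mathbb R^d}|D^\alpha w|^2\,d\gamma .
\]

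We apply this with $w=u-u_h$. Because $u|_\Omega=u^\Omega$ and $u_h^\Omega=u_h|_\Omega$, the left side is exactly $\|u^\Omega-u_h^\Omega\|_{H^S(\Omega)}^2$. For integer $S$, the norm equivalence \eqref{eq:Hermite-derivative-norm-equivalence} bounds the right side by $C\|u-u_h\|_{\mathcal H_\gamma^S}^2$, with $C$ depending only on $d$ and $S$.

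The one point to check is that $u-u_h$ belongs to $\mathcal H_\gamma^S$. We have $u_h\in V_\Xi\subset\mathcal H_\gamma^S$ by \autoref{thm:inverse-sharpness}(i), since $S<k<k+1/2$. For $u$, membership is implicit in the finiteness of the left side of \eqref{eq:two-scale-convergence-rate}, which \autoref{thm:convergence} asserts under the assumed hypotheses. This lets us apply the equivalence \eqref{eq:Hermite-derivative-norm-equivalence} to the difference.

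Combining these bounds gives
\[
  \|u^\Omega-u_h^\Omega\|_{H^S(\Omega)}\le C c_\Omega^{-1/2}\|u-u_h\|_{\mathcal H_\gamma^S}.
\]
Inserting \eqref{eq:two-scale-convergence-rate} yields \eqref{eq:fixed-region-two-scale-rate}, and inserting \eqref{eq:width-rate} yields \eqref{eq:fixed-region-one-scale-rates}. In both cases $C_\Omega=Cc_\Omega^{-1/2}$ times the constant of \autoref{thm:convergence}. This constant depends on $\Omega$ only through $R_\Omega$ and is therefore independent of $h,\delta_\pm,H,R,J,N$.

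There is no genuine obstacle in this argument. The only care needed is to confirm that the weight lower bound uses the fixed radius $R_\Omega$, not the growing sampling radius $R(h)$, so that no logarithmic factors are introduced.
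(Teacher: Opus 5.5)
Your proposal is correct and follows essentially the same route as the paper: you bound $\rho_d$ below on the fixed bounded set, use the integer-order norm equivalence \eqref{eq:Hermite-derivative-norm-equivalence} to get $\|u^\Omega-u_h^\Omega\|_{H^S(\Omega)}\le C_\Omega\|u-u_h\|_{\mathcal H_\gamma^S}$, and then insert \eqref{eq:two-scale-convergence-rate} and \eqref{eq:width-rate}. Your additional check that $u-u_h\in\mathcal H_\gamma^S$ is a harmless refinement that the paper leaves implicit.
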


The following theorem extends the convergence estimates to independent uniform residual samples.

\begin{theorem}
\label{thm:random-physical-least-squares}
Retain the assumptions on the orders, equation, solution, and parameter dictionary, and the choices of $p_3,p_4,R,M_h$, from \autoref{thm:convergence}. Let $Y_1,\ldots,Y_N$ be independent and uniformly distributed in $B_R$, and define $\|g\|_{N,\gamma}^2:=(|B_R|/N)\sum_{i=1}^N\rho_d(Y_i)|g(Y_i)|^2$. Let $u_{h,N}$ minimize $\|\mathcal Av-f\|_{N,\gamma}^2$ over $v\in V_{\Xi_{h,R}}(M_h)$, with $f$ taken as the continuous representative of $\mathcal Au$.

There exist constants $C_{\rm samp}\ge1$ and $C_{\rm err}>0$ such that, if $\eta\in(0,1)$ and
\[
 N\ge C_{\rm samp}\delta_\pm^{-d}(\log(e/h))^{d(p_3+1/2)}
 \log\bigl(C_{\rm samp}/(\eta\delta_\pm)\bigr),
\]
then, for sufficiently small $h$, with probability at least $1-\eta$ every minimizer satisfies
\begin{equation}
\label{eq:random-two-scale-rate}
  \|u-u_{h,N}\|_{\mathcal H_\gamma^S}
  \le
  C_{\rm err}(\log(e/h))^\chi
  h^{s_k-r}\delta_\pm^{-(S-r)}.
\end{equation}
The constants $C_{\rm samp},C_{\rm err}$ and the smallness threshold for $h$ depend only on $d,k,r,S,C_b$, $p_1,p_2,p_3,p_4,\kappa$, the constants in the two assumptions, and the fixed parameter geometry constants. They are independent of $h,\delta_\pm,R,J,N,\eta$.

If, in addition, $\delta_\pm\gtrsim h$, the same event yields
\begin{equation}
\label{eq:random-width-rate}
  \|u-u_{h,N}\|_{\mathcal H_\gamma^S}
  \le
  C_{\rm err}(1+\log J)^{\widetilde{\chi}}J^{-(s_k-S)/d}.
\end{equation}
Here $C_{\rm err}$ may additionally depend on the fixed lower bound for $\delta_\pm/h$. In this case, for fixed $\eta$, one may choose $N=O(J(\log J)^{dp_3+(d+1)/2})$.
\end{theorem}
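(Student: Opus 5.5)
The plan is to reduce the random-sampling theorem to the deterministic argument behind \autoref{thm:convergence}. That argument uses the point set $X$ at exactly one place: a discrete norm equivalence (sampling inequality) on $B_R$ for the error $w=u_h-\Pi_h u$. Here $\Pi_h u\in V_\Xi(M_h)$ is the quasi-interpolant or best approximant from the approximation estimates. The remaining steps carry over unchanged, namely the a priori bound of \autoref{ass:apriori}, the inverse inequality \autoref{thm:inverse}, the tail control through the coefficient constraint, and the local growth bounds of \autoref{ass:solution-local-growth}. So I will isolate two ingredients:
\[
 \|\mathcal A w\|_{L^2_\gamma(B_R)}^2\le C\|\mathcal Aw\|_{N,\gamma}^2+(\text{small}),\qquad
 \mathbb E\|\mathcal A(u-\Pi_hu)\|_{N,\gamma}^2=\|\mathcal A(u-\Pi_hu)\|_{L^2_\gamma(B_R)}^2 .
\]
The first should hold uniformly for $w\in V_\Xi(2M_h)$ with high probability. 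The second is an identity, and it feeds a Markov bound with probability $\eta/2$. The minimizing property gives $\|\mathcal A u_{h,N}-f\|_{N,\gamma}\le\|\mathcal A\Pi_hu-f\|_{N,\gamma}$. Combining this with the two displays, and then inserting the result into the deterministic chain from the proof of \autoref{thm:convergence}, yields \eqref{eq:random-two-scale-rate}. One extra logarithmic factor from Markov is absorbed because $\eta$ is fixed relative to $C_{\rm err}$. To keep $C_{\rm err}$ independent of $\eta$, I will instead use a Bernstein bound on the bounded, smooth function $\rho_d|\mathcal A(u-\Pi_hu)|^2$ restricted to $B_R$. Its sup norm is controlled polynomially in $R$ via \autoref{ass:solution-local-growth} and Sobolev embedding of $H^{s_k}(B_R)$. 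Since $N$ is large, this yields a deviation that is at most a constant multiple of the mean.

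The uniform lower sampling inequality is the main obstacle. I will deduce it from the deterministic sampling inequality on a covering rather than prove a matrix Chernoff bound on a nonlinear set. Partition $B_R$ into cubes of side $H\asymp\delta_\pm(\log(e/h))^{-p_3}$. There are about $(R/H)^d\lesssim\delta_\pm^{-d}(\log(e/h))^{d(p_3+1/2)}$ of them, since $R\asymp(\log(e/h))^{1/2}$. A coupon-collector bound with $N$ as in the hypothesis shows that, with probability $\ge1-\eta/2$, every cube contains a sample. Here $\log(\#\text{cubes}/\eta)\lesssim\log(C_{\rm samp}/(\eta\delta_\pm))$.

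On that event I extract one sample per cube, keeping only those in alternate cubes so that separation holds. This gives a subset $X'$ with $h_{X'}\asymp q_{X'}\asymp H$, to which the deterministic sampling inequality of the proof of \autoref{thm:convergence} applies with weights $\omega_i\asymp H^d\rho_d(x_i)$. The remaining task is to compare $\|\cdot\|_{X',\omega}$ with $\|\cdot\|_{N,\gamma}$. Two facts are needed. First, $\rho_d$ varies by a bounded factor across a cube, because $RH\to0$. Second, the weights $|B_R|/N$ summed over the points of a cube dominate $H^d$ up to constants. This second fact needs the count in each cube to be at least comparable to its mean $N H^d/|B_R|$, not merely positive. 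I will therefore use a Chernoff lower-tail bound per cube together with a union bound. This costs only the same logarithm, since the hypothesis on $N$ has mean count $\gtrsim\log(\#\text{cubes}/\eta)$. Dropping the points of nonselected cubes only decreases $\|\cdot\|_{N,\gamma}$. For a lower bound on the empirical norm that is harmless, but the comparison also requires each cube's contribution to dominate the value at one chosen point. I expect this to follow from a local Bernstein/Markov-type inequality for $\mathcal Aw$ on cubes of size $H$, which is the same mechanism that justifies the deterministic sampling inequality with fill distance $H$. So I would either reuse that lemma cube by cube, or apply the deterministic result directly with all points in the cube averaged.

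Finally, in the case $\delta_\pm\gtrsim h$ the same event gives \eqref{eq:random-width-rate}, exactly as \eqref{eq:width-rate} follows from \eqref{eq:two-scale-convergence-rate}, using $J\asymp(1+R)h^{-d}$. The sample size follows by substituting $\delta_\pm\asymp h\asymp(J/\sqrt{\log J})^{-1/d}$. This gives $N\lesssim J(\log J)^{-1/2}(\log J)^{d(p_3+1/2)}\log J=J(\log J)^{dp_3+(d+1)/2}$, as claimed.
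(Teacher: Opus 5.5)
\textbf{Genuine gap: the consistency term $g:=\mathcal A(w_h-u)$.} The proposal does not establish the required bound for $\|g\|_{N,\gamma}$ with a constant independent of $\eta$.

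Markov's inequality gives $\|g\|_{N,\gamma}^2\le(2/\eta)\|g\|_{L_\gamma^2(B_R)}^2$. This puts a factor $\eta^{-1/2}$, not a logarithm, into $C_{\rm err}$, which the statement forbids.

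Your Bernstein repair does not work as described. Set $Z_i=|B_R|\rho_d(Y_i)|g(Y_i)|^2$, with mean $\mu=\|g\|_{L_\gamma^2(B_R)}^2$ and range $B=|B_R|\sup_{B_R}\rho_d|g|^2$. Bernstein gives $\|g\|_{N,\gamma}^2\le2\mu+CB\log(1/\eta)/N$. The last term is a multiple of $\mu$ only if $N\gtrsim(B/\mu)\log(1/\eta)$, and nothing controls $B/\mu$ for an approximation error.

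What your sample size does give, since $|B_R|/M_0\asymp H^d$, is
\[
B\log(1/\eta)/N\lesssim H^d\sup_{B_R}\rho_d|g|^2 .
\]
A supremum bound that is merely polynomial in $R$ leaves this term of size $H^d$ up to logarithms. That is far above $h^{2(s_k-r)}$ when $\delta_\pm\asymp h$. Even the sharpest global embedding $H^t(B_R)\hookrightarrow L^\infty$, $t>d/2$, combined with \eqref{eq:optimal-sobolev-approximation}, still loses a factor $h^{-(t-d/2)}$ against the target rate.

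\textbf{The fix.} The missing ingredient is the $H$-scaled local estimate \eqref{eq:fractional-scaled-supremum-estimate}. Bound the maximum over patches by the sum and use the density comparison. This gives
\[
H^d\sup_{B_R}\rho_d|g|^2\lesssim\|g\|_{L_\gamma^2(B_R)}^2+H^{2(k-r)+1/2}\|g\|_{k-r+1/4,H}^2 .
\]
Together with \eqref{eq:fractional-operator-local} and \eqref{eq:optimal-sobolev-approximation} at $q=r$ and $q=k+1/4$, this yields the correct rate.

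The paper needs no concentration for this term at all. Its event $\mathcal E_{R,H,N}$ also imposes the upper count $K_\ell\le3N|V_\ell|/(2|B_R|)$ in every cell, with the same Chernoff and union bound and the same $N$. Hence $\|g\|_{N,\gamma}^2\le\frac32\sum_\ell|V_\ell|\sup_{V_\ell}\rho_d|g|^2$ for every $g$. The local estimate then gives \eqref{eq:random-fractional-empirical-upper} deterministically on the event, with no $\eta$ in the constant.

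\textbf{The lower sampling bound.} Your argument here is essentially the paper's: an occupancy event from Chernoff lower tails and a union bound costing $\log(M_0/\eta)$, one representative per cell, then the deterministic inequality \eqref{eq:fractional-local-sampling}. That inequality is in $H_\gamma^{S-r}(B_R)$ with remainder $H^{k-S+1/4}\|g\|_{k-r+1/4,H}$, absorbed through \autoref{thm:inverse}. Your $L_\gamma^2$ display suffices only when $S=r$. Two points need fixing.
\begin{enumerate}
\item No local Bernstein or Markov inequality for $\mathcal Aw$ on cells of size $H$ is needed, and the paper has no such local inverse estimate for network residuals. Inequality \eqref{eq:fractional-local-sampling} holds for every admissible choice of representatives with uniform constants. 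So take in each cell the sample minimizing $\rho_d|g|^2$, or average over all choices as the paper does, and use the lower count $\gtrsim NH^d/|B_R|$.
\item Cubes meeting $B_R$ only in thin slivers break both the lower count and the fill bound near $\partial B_R$. The paper instead uses relative Voronoi cells of a maximal $H/8$-separated subset of $B_R$, each containing a core ball $W_\ell$ of radius $H/384$. This also supplies the separation you obtain from alternate cubes.
\end{enumerate}
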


Classical estimates for Kolmogorov widths of Sobolev classes \cite{Pinkus1985,EdmundsTriebel1996} yield the following lower bound, which matches the convergence exponent in \eqref{eq:width-rate} and \eqref{eq:random-width-rate}.

\begin{proposition}
\label{prop:width-lower-bound}
There exists a class $\mathcal U_{s_k}$ of functions supported in $B_{1/2}$ with $\|u\|_{H^{s_k}(\mathbb R^d)}\le1$ such that, for every integer $0\le S<k$ and all $J\ge1$,
\begin{equation}
\label{eq:width-lower-bound}
  \inf_{\substack{W\subset\mathcal H_\gamma^S\\ \dim W\le J}}
  \ \sup_{u\in\mathcal U_{s_k}}
  \ \inf_{w\in W}
  \|u-w\|_{\mathcal H_\gamma^S}
  \ge
  cJ^{-(s_k-S)/d},
\end{equation}
where $c>0$ depends only on $d,k,S$. If a fixed operator $\mathcal A$ and orders $r,S$ satisfy the hypotheses of \autoref{thm:convergence} and \autoref{thm:random-physical-least-squares} uniformly for $u\in\mathcal U_{s_k}$, dictionaries with $\delta_\pm\asymp h$ yield matching upper bounds up to logarithmic factors. Their width exponent is therefore optimal among fixed linear trial spaces. This applies in particular to $\mathcal A=I$ when $k>(d-1)/2$.
\end{proposition}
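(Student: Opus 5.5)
The plan is to reduce the Gaussian lower bound to the classical Kolmogorov width of a Sobolev ball in an unweighted Sobolev space on a bounded domain, and then transfer it back using the fact that the Gaussian weight is comparable to a constant on a fixed ball. Fix a nonnegative cutoff $\psi\in C_c^\infty(B_{1/2})$ with $\psi=1$ on $B_{1/4}$. Take
\[
\mathcal U_{s_k}:=\{u\in H^{s_k}(\mathbb R^d):\operatorname{supp}u\subset \overline{B_{1/4}},\ \|u\|_{H^{s_k}(\mathbb R^d)}\le1\}.
\]
(Any fixed compact set strictly inside $B_{1/2}$ will do.) The classical results \cite{Pinkus1985,EdmundsTriebel1996} give the Kolmogorov width of this class in $H^S(B_{1/2})$, measured in the derivative norm, as $\asymp J^{-(s_k-S)/d}$. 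The lower bound can be seen directly by placing $\asymp J'$ disjoint bumps of scale $J'^{-1/d}$ inside $B_{1/4}$, with $J'=2J$, and applying the standard dimension argument: for any $W$ of dimension at most $J$ there is a unit vector in the $2J$-dimensional bump span whose error from $W$ is comparable to its own norm. This is the Tikhomirov/Bernstein-width argument: the width of a ball in a $(J+1)$-dimensional space is at least its radius.

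The transfer step works as follows. Let $W\subset\mathcal H_\gamma^S$ with $\dim W\le J$ be given, together with $u\in\mathcal U_{s_k}$ and $w\in W$. By \eqref{eq:Hermite-derivative-norm-equivalence} and $\rho_d\ge c>0$ on $B_{1/2}$,
\[
\|u-w\|_{\mathcal H_\gamma^S}\ge c\,\|u-w\|_{H^S_\gamma(B_{1/2})}\ge c'\,\|u-w|_{B_{1/2}}\|_{H^S(B_{1/2})}.
\]
The restrictions $W|_{B_{1/2}}$ form a linear space of dimension at most $J$ in $H^S(B_{1/2})$; functions in $\mathcal H^S_\gamma$ have locally $H^S$ restrictions since $S$ is an integer. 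Hence the supremum over $u$ of the infimum over $w$ is bounded below by $c'$ times the unweighted width of the bump family in $H^S(B_{1/2})$, which is at least $cJ^{-(s_k-S)/d}$. For the bump construction one also checks that the bump span lies in $\mathcal U_{s_k}$ after normalization. The $H^{s_k}$-norm of a combination of disjoint scaled bumps with coefficient vector $c$ is $\lesssim J^{s_k/d}|c|_2$, while the $H^S(B_{1/2})$ distance to $W$ of a suitable unit combination is $\gtrsim J^{S/d}|c|_2$. The constant then depends only on $d,k,S$, and $s_k$ non-integer causes no issue, since we only need the upper $H^{s_k}$ estimate via Fourier scaling.

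The main obstacle is the distance bound in the bump argument. For the $2J$-dimensional span $B$ of bumps $\{b_i\}$, one needs $g\in B$ with $\operatorname{dist}_{H^S(B_{1/2})}(g,W)\gtrsim\|g\|_{H^S}\asymp J^{S/d}|c|_2$. I would obtain this by taking the $L^2(B_{1/2})$-orthogonal projection $P$ onto $W|_{B_{1/2}}$ and choosing $g\in B\cap\ker(P|_B)$. This kernel is nontrivial because $\dim B>\dim W$. Such a $g$ satisfies $\|g-w\|_{L^2}\ge\|g\|_{L^2}$, hence $\|g-w\|_{H^S}\ge\|g\|_{L^2}\asymp J^{0}|c|_2$ times the bump amplitude. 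Combined with $\|g\|_{H^{s_k}}\lesssim J^{s_k/d}|c|_2$, this already yields the rate $J^{-s_k/d}$. To gain the factor $J^{S/d}$, I will instead use the $H^S$-inner product projection (with the $H^S$ derivative inner product on $B_{1/2}$), together with the fact that disjoint bumps are $H^S$-orthogonal up to a norm comparable to $J^{S/d}$ times their $L^2$ size. This gives $\|g-w\|_{H^S}\ge\|g\|_{H^S}\gtrsim J^{S/d}|c|_2$.

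The final claims are bookkeeping. Under the stated hypotheses, \autoref{thm:convergence} and \autoref{thm:random-physical-least-squares}, applied to dictionaries with $\delta_\pm\asymp h$ (e.g. \autoref{lem:shifted-product-parameters}), produce $u_h\in V_\Xi$, a space of dimension at most $J$. The upper bound $J^{-(s_k-S)/d}$ up to logarithms therefore matches \eqref{eq:width-lower-bound}. For $\mathcal A=I$, $r=0$, \autoref{ass:apriori} is trivial and the coefficient bounds hold with $p_2=0$. The condition $k-r>(d-1)/2$ becomes $k>(d-1)/2$, and the local growth bound holds with $p_1=0$ for compactly supported $u\in\mathcal U_{s_k}$.
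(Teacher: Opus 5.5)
Your proof is correct, and it reaches the key lower bound by a different route from the paper.

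The paper defines $\mathcal U_{s_k}$ as the image of a ball of $H^{s_k}(B_{1/4})$ under a fixed extension-and-cutoff operator $E_c$ into functions supported in $B_{1/2}$. It then restricts $u$ and $W$ to $B_{1/4}$ and cites the asymptotics of approximation numbers for $H^{s_k}(B_{1/4})\hookrightarrow H^S(B_{1/4})$ from Edmunds--Triebel. Finally it uses the Hilbert-space identification of Kolmogorov widths with approximation numbers.

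You instead take the unit ball of $H^{s_k}(\mathbb R^d)$ functions supported in $\overline{B_{1/4}}$ and prove the width bound directly. Place $2J$ disjoint $L^2$-normalized bumps at scale $\varepsilon\asymp J^{-1/d}$ in $B_{1/4}$. A dimension count gives a nonzero $g$ in their span that is orthogonal to $W|_{B_{1/2}}$ in the $H^S(B_{1/2})$ derivative inner product, so $\operatorname{dist}(g,W)=\|g\|_{H^S(B_{1/2})}$. The scalings $\|g\|_{H^S(B_{1/2})}\gtrsim\varepsilon^{-S}|c|_2$ and $\|g\|_{H^{s_k}(\mathbb R^d)}\lesssim\varepsilon^{-s_k}|c|_2$ then give the rate $J^{-(s_k-S)/d}$.

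The paper's route is shorter and identifies the width of a full Sobolev ball two-sidedly. Yours is self-contained, avoids extension operators and approximation-number theory, and gives only the lower bound, which is all the proposition requires. Your bookkeeping for $\mathcal A=I$ with $p_1=p_2=0$ matches the paper.

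Three details to tighten:
\begin{itemize}
\item Disjointly supported functions are exactly orthogonal in the integer-order derivative inner product, so no \emph{up to} is needed.
\item When $d$ is even, $s_k$ is a half-integer. Then the upper $H^{s_k}$ bound for the bump sum does not follow from Fourier scaling of a single bump, because the fractional seminorm has cross terms. Use exact additivity at the integer orders $m=\lfloor s_k\rfloor$ and $m+1$, together with the interpolation inequality $\|g\|_{H^{m+\theta}}\le\|g\|_{H^m}^{1-\theta}\|g\|_{H^{m+1}}^{\theta}$.
\item The preliminary $L^2$-projection detour can simply be dropped.
\end{itemize}
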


\section{Numerical Experiments}
\label{sec:experiments}

In this section, we relate convergence of the least squares approximations to the geometry of the network parameters and the density of the residual samples. We first examine convergence for a model PDE, then use parameter comparisons and collisions to study the role of separation. We also examine discrete stability under deterministic and random residual sampling. The code is available at \url{https://github.com/hanfei27/NNLS}.

\subsection{Model problem and discretization}
\label{subsec:experiment-setup}

In this subsection, we specify a model problem for which the PDE stability estimate is exact. We consider the shifted Ornstein--Uhlenbeck equation
\[
  \cA u=-\Delta u+x\cdot\nabla u+u=f\quad\text{in }\mathbb R^2.
\]
Since $\cA H_\alpha=(|\alpha|+1)H_\alpha$, we have $\|\cA w\|_{\cH_\gamma^{S-2}}=\|w\|_{\cH_\gamma^S}$ for $S\ge2$. We choose $f=\cA u$ and measure the relative errors
\[
  E_s(u_h):=
  \frac{\|u-u_h\|_{H_\gamma^s}}
       {\|u\|_{H_\gamma^s}},
  \qquad s=0,1,2,3.
\]
By \eqref{eq:Hermite-derivative-norm-equivalence}, the derivative norm is equivalent to the $\mathcal H_\gamma^s$ norm. The five exact solutions are
\begin{align*}
 u_1(x,y)&=(1+0.03x+0.025y+0.015xy)e^{-0.10x^2-0.12y^2},\\
 u_2(x,y)&=\exp\!\left[-0.50\!\left(\sqrt{1+0.16(x+0.25y-0.10)^2+0.20(-0.20x+y+0.10)^2}-1\right)\right],\\
 u_3(x,y)&=0.70\tanh(0.14x-0.08y)+0.30\tanh(0.08x+0.15y),\\
 u_4(x,y)&=\operatorname{arsinh}(0.16x+0.10y),\\
 u_5(x,y)&=|x|^{21/4}e^{-0.10x^2-0.12y^2}.
\end{align*}

We use $\mathrm{ReLU}^4$ networks $u_h[c]=\sum_{j=1}^Jc_j\phi_j$, with $\phi_j(x)=(a_j\cdot x-b_j)_+^4$. Unless stated otherwise, residuals are sampled in $B_{5.5}$ and the parameters lie in the fixed window $\Theta_0=\mathbb S^1\times[-5,5]$. Following the construction in \autoref{lem:shifted-product-parameters}, we use quasiuniform parameter sets on $\Theta_0$ of the form
\[
 a_p=\left(\cos\frac{2\pi p}{n_a},\sin\frac{2\pi p}{n_a}\right),
 \qquad b_t=-5+\left(t+\frac34\right)\frac{10}{n_b}.
\]
Here $p=0,\ldots,n_a-1$ and $t=0,\ldots,n_b-1$; the bias shift avoids exact antipodal pairings. We denote the fill distance on $\Theta_0$, measured with $d_0$, by $h_{\Theta_0}$. Deterministic residual samples use tensor Gauss--Legendre rules with nine nodes on rotated Cartesian cells, retaining points in $B_{5.5}$ and multiplying their weights by $\rho_d$.

Unconstrained least squares problems are solved by QR factorization with column pivoting \cite{BusingerGolub1965}. For coefficient-constrained problems, when the unconstrained solution violates the bound, we use accelerated projected gradient iterations on the $\ell^1$ ball \cite{BeckTeboulle2009}, with termination assessed by the Karush--Kuhn--Tucker conditions. Relative errors are evaluated by composite Gauss--Legendre quadrature on $[-8,8]^2$. For the study of parameter refinement, truncation sensitivity is checked by enlarging this domain with the computed coefficients fixed.

We compute numerical ranks and condition numbers from the residual matrix $A_{ij}=\omega_i^{1/2}(\mathcal A\phi_{a_j,b_j})(x_i)$ after column normalization, $\widetilde A_{\cdot j}=A_{\cdot j}/\|A_{\cdot j}\|_2$. The numerical rank counts singular values of $\widetilde A$ exceeding $\max\{N,J\}\varepsilon$ times the largest singular value, where $\varepsilon$ denotes machine precision. The plotted condition number $\kappa_1$ is the ratio of the largest to the smallest singular value, capped at $1/(\max\{N,J\}\varepsilon)$; matrices without full column rank are assigned this cap.

\subsection{Convergence under parameter refinement}
\label{subsec:experiment-convergence}

In this subsection, we study convergence as the parameter set is refined. For $d=2$, $k=4$, and $r=2$, the estimate in \autoref{thm:convergence} gives the factors $h^{7/2}$ and $h^{5/2}$ at orders $S=2$ and $S=3$ when coverage and separation have comparable scales. We examine the corresponding behavior on the fixed parameter window $\Theta_0$.

\begin{figure}[!t]
\centering
\includegraphics[width=\linewidth]{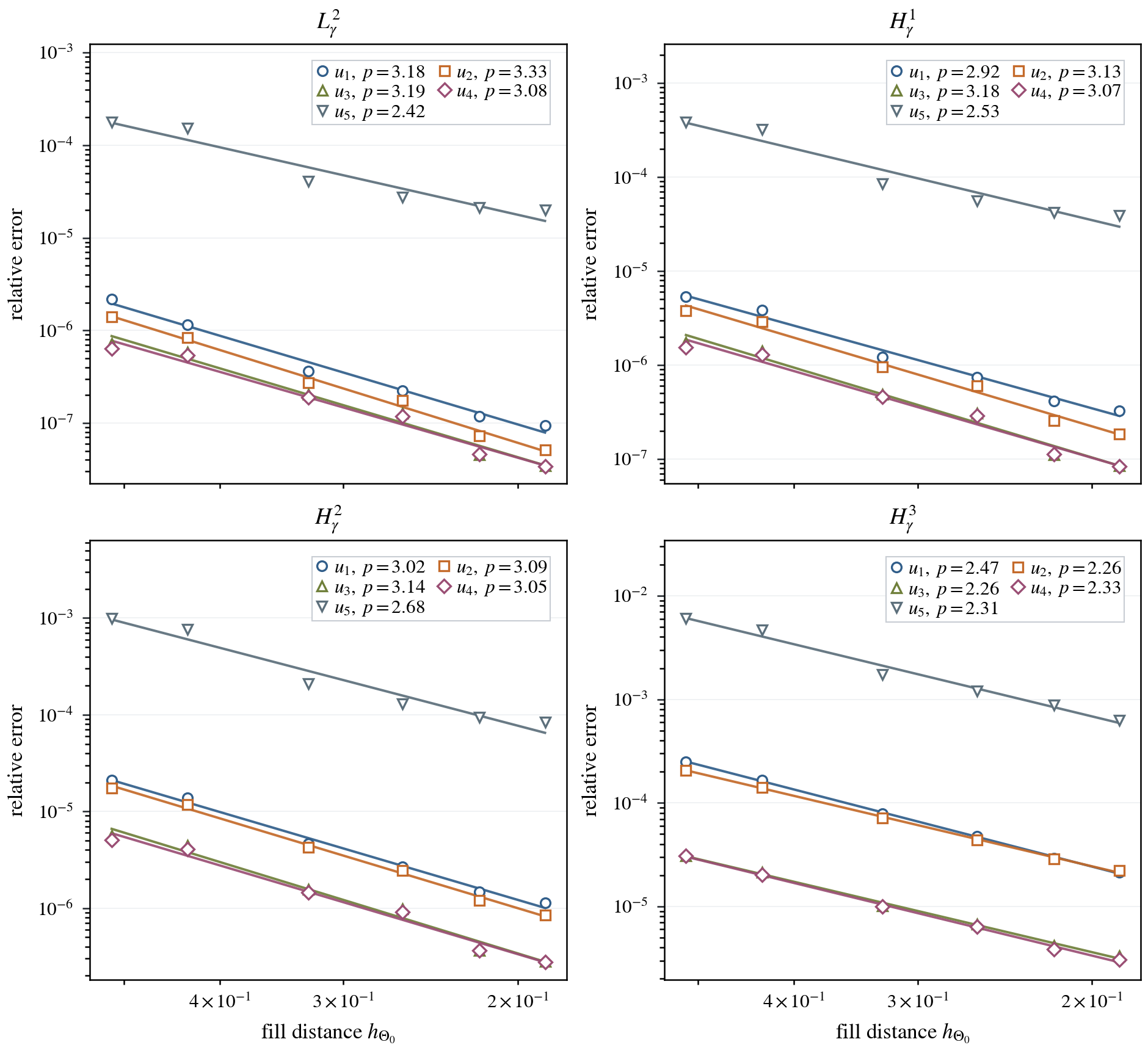}
\caption{Relative errors $E_s$, $s=0,1,2,3$, under parameter refinement on $\Theta_0$; $p$ is the least squares slope of $\log E_s$ against $\log h_{\Theta_0}$.}
\label{fig:experiment-sobolev-convergence}
\end{figure}

The six network widths are $J=420,600,1056,1640,2352,3192$, with corresponding residual sample sizes $N=1680,2380,4128,6392,9120,12320$. \autoref{fig:experiment-sobolev-convergence} shows systematic error decay under parameter refinement for all five solutions in all four norms. For $u_1,\ldots,u_4$, the fitted orders are about $3.0$--$3.1$ for $E_2$ and $2.3$--$2.5$ for $E_3$, below the reference exponents $7/2$ and $5/2$ over this short range of $h_{\Theta_0}$ with $R=5.5$ and the bias window fixed; the order drops by $0.55$--$0.88$ from $E_2$ to $E_3$, compared with the predicted drop of one. For these four solutions, the unconstrained least squares coefficients in this refinement study satisfy $\|c\|_1\le0.19$; since $M_h\ge1$, the constraint is inactive and the computed solution is also a minimizer of \eqref{eq:constrained-LS-problem}.

\subsection{Choice of network parameters}
\label{subsec:experiment-parameter-design}

In this subsection, we examine the effect of parameter geometry at fixed width $J=600$ and with the same $N=2380$ residual samples. We compare the quasiuniform grid of $24$ directions and $25$ biases with two random constructions: $24$ independent uniform directions combined with the same biases, and $600$ independent uniform parameters in $[0,2\pi)\times[-5,5]$. 

\begin{table}[H]
\centering
\caption{Relative errors averaged over the four smooth targets $u_1,\ldots,u_4$. Random rows report mean $\pm$ standard deviation over $50$ parameter sets.}
\label{tab:experiment-parameter-design-summary}
\scriptsize
\renewcommand{\arraystretch}{1.12}
\setlength{\tabcolsep}{2pt}
\begin{tabular}{@{}lcccc@{}}
\toprule
Parameter design & $E_0$ & $E_1$ & $E_2$ & $E_3$\\
\midrule
Quasiuniform
  &$7.774\times10^{-7}$
  &$2.361\times10^{-6}$
  &$8.534\times10^{-6}$
  &$8.700\times10^{-5}$\\
Random directions
  &$(1.869\pm1.735)\times10^{-6}$
  &$(6.383\pm6.159)\times10^{-6}$
  &$(2.866\pm2.215)\times10^{-5}$
  &$(3.067\pm1.927)\times10^{-4}$\\
Fully random
  &$(2.494\pm0.648)\times10^{-6}$
  &$(8.586\pm1.964)\times10^{-6}$
  &$(4.402\pm0.819)\times10^{-5}$
  &$(4.342\pm0.715)\times10^{-4}$\\
\bottomrule
\end{tabular}
\end{table}

\autoref{tab:experiment-parameter-design-summary} shows that the quasiuniform construction achieves the smallest mean errors in all four norms. Since both coverage and separation improve relative to the random designs, we next isolate the effect of separation on the inverse constant.

\subsection{Growth of the inverse constant}
\label{subsec:experiment-d0-sharpness}

In this subsection, we test the separation exponent in \autoref{thm:inverse-sharpness} using six neurons $\phi_j(x)=(x_1-(j-5/2)\delta)_+^4$, $j=0,\ldots,5$. For each $\tau=1,2,3,4$, the inverse constant is the maximum of $\frac{\norm{v}_{\cH_\gamma^\tau}}{\norm{v}_{L_\gamma^2}}$ over nonzero functions in their span. Its predicted growth is $\delta^{-\tau}$.

We use $\delta=2^{-3},\ldots,2^{-7}$ and evaluate the spectral norms in \eqref{eq:Hermite-Sobolev-definition} from derivative Gram matrices in higher precision. \autoref{tab:experiment-inverse-orders} supports the sharpness of the separation exponent. The inverse constant grows at fixed width, showing that width alone cannot control inverse stability.

\begin{table}[H]
\centering
\caption{Growth exponents in $1/\delta$.}
\label{tab:experiment-inverse-orders}
\begin{tabular}{@{}cc@{}}
\toprule
Predicted exponent $\tau$ & Fitted exponent\\
\midrule
$1$ & $0.9232$\\
$2$ & $1.9106$\\
$3$ & $2.9069$\\
$4$ & $3.9056$\\
\bottomrule
\end{tabular}
\end{table}

\subsection{Residual sampling}
\label{subsec:experiment-oversampling}

We examine how residual sampling affects stability and accuracy for a fixed trial space. We use the $16\times17$ parameter grid with $J=272$ and vary the number of residual samples as listed in \autoref{tab:experiment-sampling-ranks}. For $u=u_1$ and $R=5.5$, we compare deterministic sampling with independent uniform sampling. The random approximations use the least squares formulation in \autoref{thm:random-physical-least-squares} with the coefficient bound $M=1$ and $30$ independent realizations at each sample size. We report the numerical rank and condition number of the column-normalized residual matrix, together with the relative error $E_3$.

\begin{figure}[H]
\centering
\includegraphics[width=\linewidth]{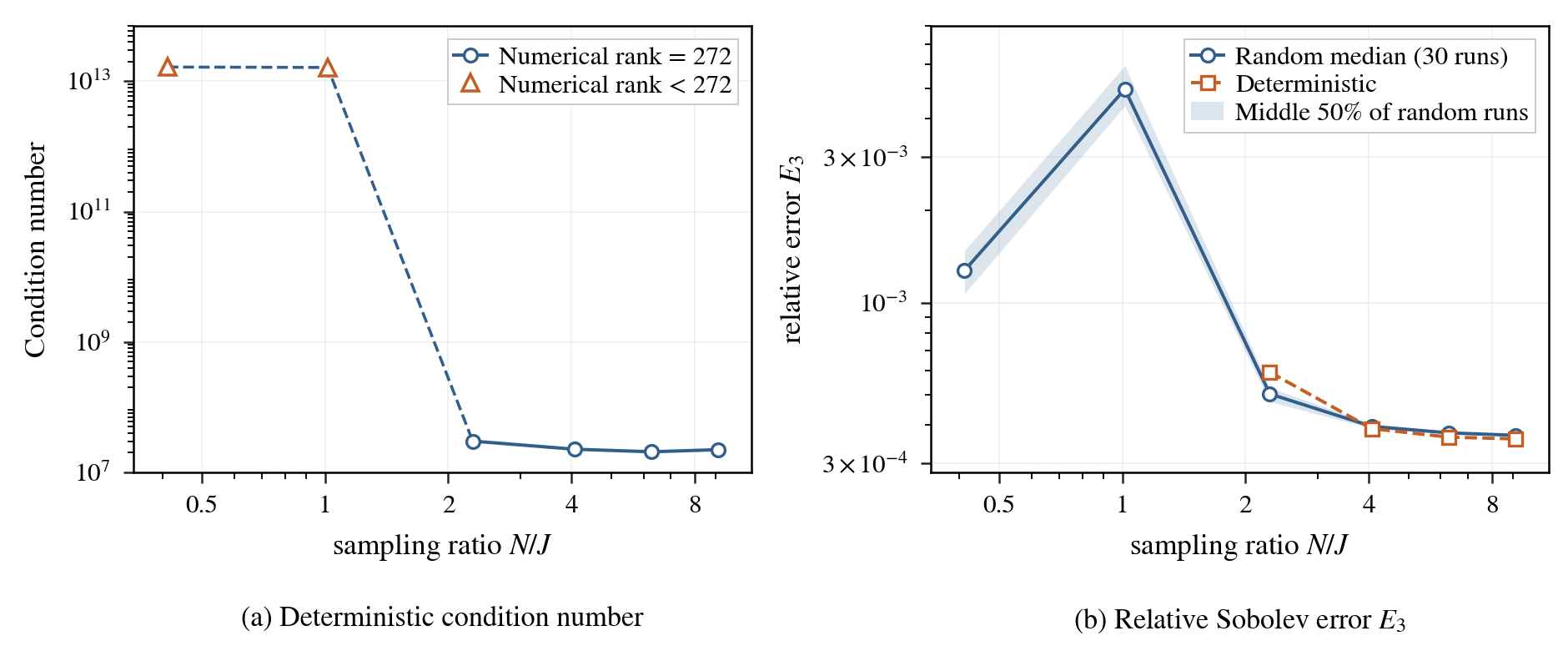}
\caption{Effect of residual sampling on conditioning and solution accuracy. Triangles mark numerical rank deficiency; their vertical positions indicate the display cap.}
\label{fig:experiment-residual-sampling}
\end{figure}

\begin{table}[H]
\centering
\caption{Deterministic ranks and full rank counts for random sampling, with $J=272$.}
\label{tab:experiment-sampling-ranks}
\begin{tabular}{@{}lrrrrrr@{}}
\toprule
$N$ & $112$ & $276$ & $624$ & $1108$ & $1708$ & $2480$\\
\midrule
Deterministic rank & $112$ & $266$ & $272$ & $272$ & $272$ & $272$\\
Full rank (random) & $0/30$ & $29/30$ & $30/30$ & $30/30$ & $30/30$ & $30/30$\\
\bottomrule
\end{tabular}
\end{table}

\autoref{tab:experiment-sampling-ranks} and \autoref{fig:experiment-residual-sampling} show that a sample count slightly larger than the network width can still yield poor stability and accuracy. At $N=276$, the deterministic matrix remains numerically rank deficient, while the median random error peaks despite full numerical rank in $29$ of the $30$ runs. Increasing $N$ to $624$ restores full numerical rank in all tested cases and substantially reduces the random error. With further sampling, the random and deterministic errors become comparable and improve only modestly.

\subsection{Antipodal parameter collisions}
\label{subsec:experiment-antipodal}

In this subsection, we keep the residual set dense and examine the effect of nearly antipodal parameters. We fix $N=4112$ and vary the antipodal separation of a $J=132$ parameter set. We use the matrix normalization and numerical rank criterion defined in \autoref{subsec:experiment-setup}.

\autoref{fig:experiment-antipodal-stability} shows that nearly antipodal parameters cause a substantial deterioration in conditioning before numerical rank is lost, even with dense residual sampling. For the tested separations from $9.09\times10^{-3}$ down to $9.09\times10^{-6}$, each tenfold reduction in $\delta_\pm$ increases $\kappa_1$ by approximately a factor of $10$, consistent with the empirical scaling $\kappa_1\propto\delta_\pm^{-1}$ over this range.

\begin{figure}[H]
\centering
\includegraphics[width=0.6\linewidth]{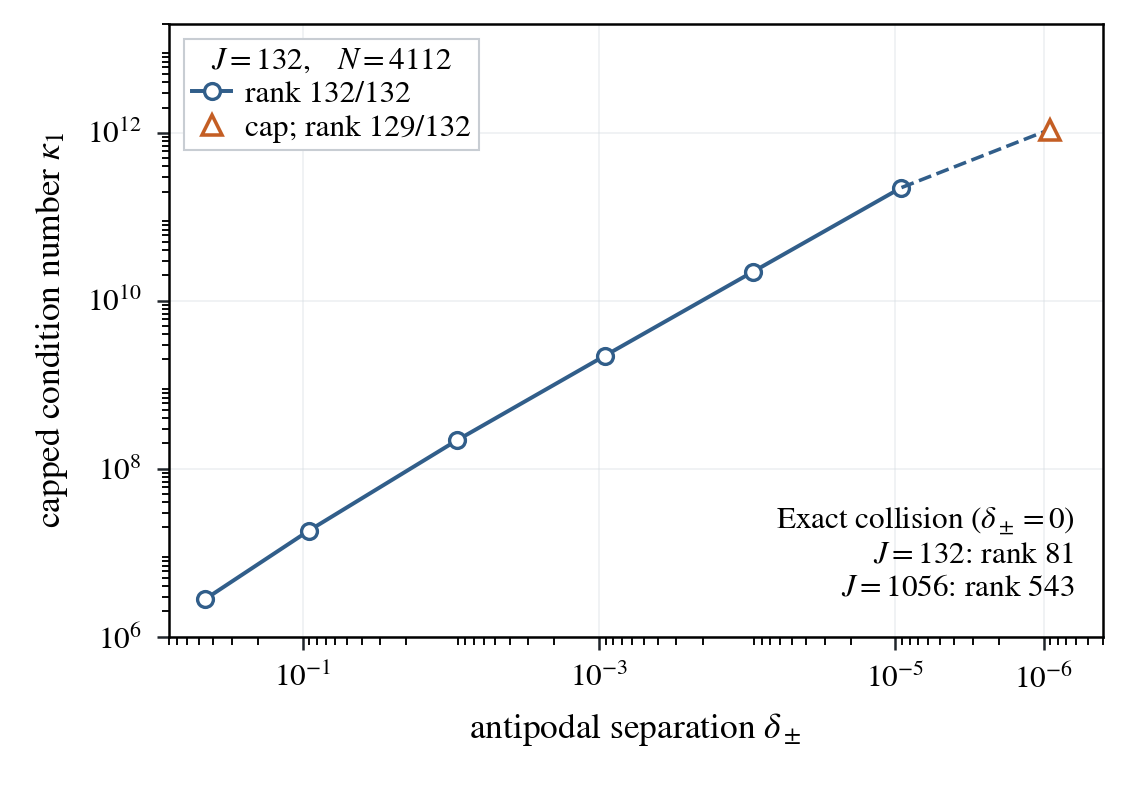}
\caption{Capped condition number near antipodal parameter collisions.}
\label{fig:experiment-antipodal-stability}
\end{figure}

At exact antipodal pairing, the identity $\phi_{a,b}+\phi_{-a,-b}=(a\cdot x-b)^4$ gives $\dim V_\Xi\le P+15$ for $J=2P$ neurons in two dimensions. The computed ranks $81$ and $543$, for $J=132$ and $J=1056$, respectively, attain this bound. This algebraic dependence persists regardless of the number of residual samples.

\section{Proofs of the Main Results}
\label{sec:proofs}

In this section, we prove the main theorems. The technical estimates supporting the proofs are collected in \autoref{app:inverse-lemmas} and \autoref{app:convergence-lemmas}.

\subsection{Inverse inequality and sharpness}
\label{subsec:inverse-parameters}

In this subsection, we compare Hermite Gram matrices to prove the inverse inequality and then establish sharpness.

For $R\ge1$ and $\Xi=\{\theta_j=(a_j,b_j)\}_{j=1}^J\subset\Theta_R$, set $\eta_n(b):=\int_{\mathbb R}(z-b)_+^k H_n(z)\,d\gamma_1(z)$. Let $v=\sum_{j=1}^Jc_j\phi_{a_j,b_j}$ and $c=(c_1,\ldots,c_J)^\top$. Since $(z-b)_+^k\in L^2(\gamma_1)$, the definition of $\eta_n(b)$ and the orthonormal basis property of $\{H_n\}_{n\ge0}$ give
\[
  (z-b)_+^k=\sum_{n\ge0}\eta_n(b)H_n(z)\qquad\text{in }L^2(\gamma_1).
\]
Since $|a_j|=1$, the projection $a_j\cdot x$ has distribution $\gamma_1$ under $\gamma$. Thus, for every $F\in L^2(\gamma_1)$,
\[
  \int_{\mathbb R^d}|F(a_j\cdot x)|^2\,d\gamma(x)=\int_{\mathbb R}|F(z)|^2\,d\gamma_1(z).
\]
Applying this identity to the remainders of the Hermite expansion in one variable shows that substituting $z=a_j\cdot x$ preserves $L^2$ convergence. Combining this expansion with the Hermite addition formula \cite[Eq.~(41)]{UmemuraKono1965}, iterated over the coordinates and expressed in our normalization, we obtain, in $L^2(\gamma)$,
\[
  \phi_{a_j,b_j}(x)=\sum_{n\ge0}\eta_n(b_j)H_n(a_j\cdot x)=\sum_{n\ge0}\eta_n(b_j)\sum_{|\alpha|=n}\sqrt{\frac{n!}{\alpha!}}\,a_j^\alpha H_\alpha(x).
\]
Consequently, for $|\alpha|=n$,
\[
  \widehat v_\alpha=\sqrt{\frac{n!}{\alpha!}}\sum_{j=1}^Jc_j\eta_n(b_j)a_j^\alpha.
\]
The multinomial formula then yields
\[
  \sum_{|\alpha|=n}|\widehat v_\alpha|^2=\sum_{i,j=1}^Jc_ic_j\eta_n(b_i)\eta_n(b_j)\sum_{|\alpha|=n}\frac{n!}{\alpha!}a_i^\alpha a_j^\alpha=\sum_{i,j=1}^Jc_ic_j\eta_n(b_i)\eta_n(b_j)(a_i\cdot a_j)^n.
\]
Define the Gram matrix at degree $n$ by
\begin{equation}
\label{eq:An-definition}
  (A_n)_{ij}:=\eta_n(b_i)\eta_n(b_j)(a_i\cdot a_j)^n.
\end{equation}
Then $c^\top A_nc=\sum_{|\alpha|=n}|\widehat v_\alpha|^2\ge0$, so $A_n$ is positive semidefinite. For $0\le s<k+1/2$, \autoref{lem:normalized-block-localization} gives $\phi_{\theta_j}\in\mathcal H_\gamma^s$, and the Cauchy--Schwarz inequality implies absolute entrywise convergence of $K_s:=\sum_{n\ge0}(1+n)^sA_n$. Using \eqref{eq:Hermite-Sobolev-definition}, we obtain
\begin{equation}
\label{eq:quadratic-norm-identities}
  \|v\|_{\mathcal H_\gamma^s}^2=\sum_{n\ge0}(1+n)^s\sum_{|\alpha|=n}|\widehat v_\alpha|^2=\sum_{n\ge0}(1+n)^s c^\top A_nc=c^\top K_sc,\qquad \|v\|_{L_\gamma^2}^2=c^\top K_0c.
\end{equation}
Define $\widetilde\phi_{a,b}:=\rho_1(b)^{-1/2}\phi_{a,b}$ and $D:=\operatorname{diag}(\rho_1(b_1)^{-1/2},\ldots,\rho_1(b_J)^{-1/2})$. The corresponding matrices are
\begin{equation}
\label{eq:normalized-matrix-definitions}
  \widetilde A_n:=DA_nD,\qquad
  \widetilde K_s:=DK_sD.
\end{equation}
Since $D$ is invertible, a comparison of $\widetilde K_\tau$ and $\widetilde K_\sigma$ transfers to $K_\tau$ and $K_\sigma$ and hence to the Sobolev norms in \eqref{eq:quadratic-norm-identities}. We use the positive semidefinite order: for symmetric matrices $P,Q$, the relation $P\preceq Q$ means $c^\top Pc\le c^\top Qc$ for every $c\in\mathbb R^J$.

Choose a nonincreasing $\vartheta\in C_c^\infty([0,\infty))$ with $0\le\vartheta\le1$, $\vartheta=1$ on $[0,1]$, and $\vartheta=0$ on $[2,\infty)$. Set $\zeta_0(t):=\vartheta(2t)$ and $\zeta(t):=\vartheta(t)-\vartheta(2t)$. These nonnegative windows satisfy $\operatorname{supp}\zeta_0\subset[0,1]$, $\operatorname{supp}\zeta\subset[1/2,2]$, and
\begin{equation}
\label{eq:dyadic-partition}
  \zeta_0(t)+\sum_{j=0}^{\infty}\zeta(2^{-j}t)=1,\qquad t\ge0.
\end{equation}
For dyadic $N=2^j$, $j\ge0$, define
\[
  M_N:=\sum_{n\ge0}\zeta(n/N)A_n,\qquad
  K_{s,N}:=\sum_{n\ge0}(1+n)^s\zeta(n/N)A_n.
\]
Since $\zeta_0(0)=1$ and $\zeta_0(n)=0$ for integers $n\ge1$, the zeroth block is
\[
  K_{s,0}:=\sum_{n\ge0}(1+n)^s\zeta_0(n)A_n=A_0.
\]
Tildes denote congruence by $D$. Thus $\widetilde K_s=\widetilde K_{s,0}+\sum_{N=2^j}\widetilde K_{s,N}$. With $\widetilde\eta_n(b):=\rho_1(b)^{-1/2}\eta_n(b)$, define
\[
  \widetilde m_N(\theta,\theta')
  :=\sum_{n\ge0}\zeta(n/N)\widetilde\eta_n(b)\widetilde\eta_n(b')(a\cdot a')^n,
\]
so that $(\widetilde M_N)_{ij}=\widetilde m_N(\theta_i,\theta_j)$. The following estimate combines the Hermite coefficient formula with localized Hermite kernel bounds \cite{PetrushevXu2008}; the antipodal term follows from the parity of the Hermite polynomials. Its proof is given in \autoref{app:inverse-lemmas}.

\begin{lemma}
\label{lem:normalized-block-localization}
There exists a fixed $N_1\ge1$ such that, for every integer $L\ge1$, there exists $C_L>0$ satisfying, for all $R\ge1$, $N\ge N_1$, and $\theta,\theta'\in\Theta_R$,
\begin{equation}
\label{eq:normalized-block-localization}
  |\widetilde m_N(\theta,\theta')|
  \le
  C_L N^{-k-1/2}
  (1+\sqrt N\,d_0(\theta,\theta'))^{-L}
  +
  C_L N^{-k-1/2}
  (1+\sqrt N\,d_\pi(\theta,\theta'))^{-L}.
\end{equation}
Moreover, there is a fixed $C>0$ such that $N\ge C(1+R)^2$ implies
\[
  \widetilde m_N(\theta,\theta)
  \ge
  cN^{-k-1/2},
  \qquad \theta\in\Theta_R .
\]
In particular, $\phi_\theta\in\mathcal H_\gamma^s$ for every $\theta\in\Theta_R$ and $0\le s<k+1/2$.
\end{lemma}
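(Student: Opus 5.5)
The plan is to make $\widetilde m_N$ explicit and reduce it to a smoothed one-dimensional Hermite-function kernel multiplied by the angular factor $(a\cdot a')^n$.

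\textbf{Step 1: closed form for $\widetilde\eta_n$.} Use Rodrigues' formula $H_n\rho_1=(-1)^n\rho_1^{(n)}/\sqrt{n!}$ and integrate by parts $k+1$ times in $\eta_n(b)=\int(z-b)_+^kH_n\rho_1\,dz$. The boundary terms vanish, and the $(k+1)$-st derivative of $(z-b)_+^k$ is $k!\,\delta_b$. For $n\ge k+1$ this should give
\[
  \eta_n(b)=\pm k!\sqrt{\tfrac{(n-k-1)!}{n!}}\;\rho_1(b)H_{n-k-1}(b),
\]
with a sign depending only on $n,k$. Hence
\[
  \widetilde\eta_n(b)=\pm k!\,\lambda_n\,h_{n-k-1}(b),\qquad
  \lambda_n:=\sqrt{(n-k-1)!/n!}\asymp n^{-(k+1)/2},
\]
where $h_m:=\rho_1^{1/2}H_m$ are the $L^2(\mathbb R)$-normalized Hermite functions up to a constant. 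Take $N_1>2(k+1)$, so that only $n\ge k+1$ enter $\widetilde m_N$. Then, with $m=n-k-1$,
\[
  \widetilde m_N(\theta,\theta')=(k!)^2\sum_n \zeta(n/N)\,\lambda_n^2\,h_m(b)h_m(b')\,t^n,
  \qquad t=a\cdot a'.
\]

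\textbf{Step 2: reduce to the case $t\ge0$.} If $t<0$, write $t^n=(-1)^n|t|^n$. Parity gives $(-1)^m h_m(b')=h_m(-b')$, and $(-1)^n=(-1)^{m+k+1}$. So the $t<0$ case is the $t\ge0$ case with $(a',b')$ replaced by $(-a',-b')$, up to a global sign. This converts $d_0$ into $d_\pi$. It therefore suffices to bound the sum by the $d_0$ term when $t\ge0$, and to note that the other term is trivially dominating whenever the roles switch.

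\textbf{Step 3 (main obstacle): localization for $t=\cos\rho\ge0$.} There are two regimes.
\begin{itemize}
\item[(a)] If $\rho\ge N^{-1/2}$, use $t^n\le e^{-n\rho^2/2}\le e^{-cN\rho^2}$ on $\operatorname{supp}\zeta(\cdot/N)$. Bound the rest crudely by Cauchy--Schwarz and the Christoffel-type estimate $\sum_{m\le 2N}h_m(b)^2\lesssim\sqrt N$. This gives $N^{-k-1}\sqrt N\,e^{-cN\rho^2}$, which is at most $C_LN^{-k-1/2}(1+\sqrt N\rho)^{-L}$. The same bound holds whenever $|b-b'|\lesssim\rho$.
\item[(b)] If $\rho\le N^{-1/2}$, regard $g_N(m):=\zeta(n/N)(N\lambda_n^2n^{k+1}/N^{k+1})\,t^n\,N^{k+1}\lambda_n^2/\dots$, more simply the weight $n\mapsto\zeta(n/N)N^{k+1}\lambda_n^2t^{n}$, as $\Phi(n/N)$. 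Here $\Phi$ is smooth with all derivatives bounded uniformly, because $N\log t=O(1)$. Then invoke the Petrushev--Xu localized Hermite kernel bound
\[
  \Bigl|\sum_m\Phi(m/N)h_m(x)h_m(y)\Bigr|\le C_L\sqrt N(1+\sqrt N|x-y|)^{-L},
\]
whose constants depend only on finitely many derivatives of $\Phi$. This yields the claim with $d_0\asymp\rho+|b-b'|$.
\end{itemize}
The delicate part is verifying uniform symbol smoothness, including the shift $n\mapsto m$. Alternatively, I would handle it by summation by parts in $n$ against the Petrushev--Xu kernel for $\zeta$ alone.

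\textbf{Step 4: diagonal lower bound and membership.} On the diagonal, $\widetilde m_N(\theta,\theta)\gtrsim N^{-k-1}\sum_{m\in[N/2+1,\,3N/2]}h_m(b)^2$. For $|b|\le C_b(1+R)\le\frac12\sqrt N$, which is guaranteed by $N\ge C(1+R)^2$, the point $b$ lies in the oscillatory region. The Christoffel-function asymptotics (equivalently, the lower Petrushev--Xu bound) give $\sum h_m(b)^2\gtrsim\sqrt N$, hence $\widetilde m_N(\theta,\theta)\ge cN^{-k-1/2}$.

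Finally, for membership,
\[
  \|\phi_\theta\|_{\mathcal H_\gamma^s}^2=\rho_1(b)\sum_{n}(1+n)^s\widetilde\eta_n(b)^2 .
\]
Split this into the finitely many $n<N_1$ and the dyadic blocks, which are bounded by $\lesssim N^{s}\cdot N^{-k-1/2}$. The resulting sum converges exactly when $s<k+1/2$.
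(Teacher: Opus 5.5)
\textbf{Gap in Step~3.} Your overall route is the paper's: integration by parts via Rodrigues' formula for $\eta_n$, the parity reduction $b'\mapsto b_*=\pm b'$ that turns $d_0$ into $d_\pi$, the Petrushev--Xu localized kernel with a smooth multiplier, the band lower bound for $\sum\psi_m(b)^2$, and dyadic summation for membership. Step~3 as written does not prove the stated bound. In regime (a), Cauchy--Schwarz discards all information about $|b-b'|$. You get only $N^{-k-1/2}e^{-cN\rho^2}$, and you note yourself that this suffices only when $|b-b'|\lesssim\rho$. The case $\rho\ge N^{-1/2}$, $|b-b'|\gg\rho$ is then covered by neither regime. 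For instance, $\rho=N^{-1/2}$ and $|b-b'|\asymp1$ require a bound of order $N^{-k-1/2-L/2}$, whereas (a) gives only $N^{-k-1/2}$. Regime (b) does not help because you tied uniform smoothness of the symbol to $N\log(1/t)=O(1)$. This case matters downstream. In \autoref{lem:center-dominance} the Gershgorin row sums are controlled through \autoref{lem:packing}, which needs decay in the full $d_\pm$. With decay in the angle alone, every bias attached to a direction at angle about $N^{-1/2}$ would contribute.

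\textbf{The fix.} No case split is needed. Take $t=\cos\rho\in(0,1]$ and $\lambda:=-N\log t\ge0$, and pull out $t^{k+1}$. The coefficient of $\psi_m(b)\psi_m(b_*)$ is then $N^{-k-1}\mu(m/N)$ with $\mu(s)=\zeta(s+(k+1)/N)\prod_{j=1}^{k+1}(s+j/N)^{-1}e^{-\lambda s}$, which also settles the shift $n\mapsto m$. For $N\ge N_1\ge4(k+1)$ one has $\operatorname{supp}\mu\subset[1/4,2]$. On the support the $j$th derivative of $e^{-\lambda s}$ is at most $\lambda^je^{-\lambda/4}$. Hence $\max_{j\le L}\|\mu^{(j)}\|_\infty\le C_M(1+\lambda)^{-M}$ for every $M$, uniformly in $\lambda\ge0$, with no restriction to $N\log(1/t)=O(1)$. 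Since $\lambda\ge N(1-t)\ge cN\rho^2$, this is at most $C_L(1+\sqrt N\rho)^{-L}$. One application of the localized kernel estimate, whose constant is linear in $\max_j\|\mu^{(j)}\|_\infty$, then gives $N^{-k-1/2}(1+\sqrt N\rho)^{-L}(1+\sqrt N|b-b_*|)^{-L}$. This is at most $N^{-k-1/2}(1+\sqrt N(\rho+|b-b_*|))^{-L}$, in all regimes at once.

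\textbf{Minor point in Step~4.} $\zeta$ vanishes at $1/2$ and $2$, so on the band $m\in[N/2+1,3N/2]$ there is no uniform lower bound on $\zeta(n/N)$. Restrict first to a sub-band around $n/N=1$ where $\zeta\ge c$; positivity of the terms makes this free. Then apply the band lower bound.
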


The next lemma combines these kernel estimates with parameter separation to bound the Hermite Gram matrices from above and below. Set
\begin{equation}
\label{eq:truncated-antipodal-separation}
  \bar\delta_\pm:=\min\{1,\delta_\pm(\Xi)\}.
\end{equation}

\begin{lemma}
\label{lem:center-dominance}
Assume $\delta_\pm(\Xi)>0$. There exist $c,C>0$ and $\Lambda\ge1$, depending only on $d,k,C_b$, such that, for every dyadic $N\ge\Lambda\max\{N_1,\bar\delta_\pm^{-2},(1+R)^2\}$,
\begin{equation}
\label{eq:center-dominance}
  cN^{-k-1/2}I\preceq\widetilde M_N\preceq CN^{-k-1/2}I.
\end{equation}
\end{lemma}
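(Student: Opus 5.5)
The plan is a Gershgorin/Schur-test argument on the normalized block matrix $\widetilde M_N$, using the diagonal lower bound and the off-diagonal decay from \autoref{lem:normalized-block-localization}. Write $\widetilde M_N=\mathcal D+\mathcal O$, where $\mathcal D$ is the diagonal part and $\mathcal O$ the off-diagonal part.

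\textbf{Diagonal.} Since $N\ge\Lambda(1+R)^2$ with $\Lambda\ge C$, the lemma gives $\widetilde m_N(\theta_i,\theta_i)\ge c_0N^{-k-1/2}$. The upper bound \eqref{eq:normalized-block-localization} at $\theta=\theta'$ gives $\widetilde m_N(\theta_i,\theta_i)\le 2C_LN^{-k-1/2}$. Hence $c_0N^{-k-1/2}I\preceq\mathcal D\preceq C N^{-k-1/2}I$.

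\textbf{Off-diagonal.} Since $\mathcal O$ is symmetric, $\|\mathcal O\|_2\le\max_i\sum_{j\ne i}|\widetilde m_N(\theta_i,\theta_j)|$. The goal is to show this row sum is at most $\varepsilon(\Lambda)N^{-k-1/2}$, with $\varepsilon(\Lambda)\to0$ as $\Lambda\to\infty$.

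Fix $i$. Bound the $d_0$ terms and the $d_\pi$ terms separately; it suffices to bound
\[
\sum_{j\ne i}(1+\sqrt N\,d_0(\theta_i,\theta_j))^{-L}.
\]
This requires a packing count. The points $\theta_j$ are $\delta_\pm$-separated in $d_0$, because $d_0\ge d_\pm$. The $d_0$-balls of radius $\delta_\pm/2$ around them are disjoint; work with $\bar\delta_\pm$ so the radius is at most $1/2$. The $d$-dimensional volume of such a ball in $\Sph^{d-1}\times\R$ is $\asymp\bar\delta_\pm^{\,d}$, uniformly.

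Consequently, the number of $j$ with $d_0(\theta_i,\theta_j)\in[2^m\bar\delta_\pm,2^{m+1}\bar\delta_\pm)$ is at most $C2^{md}$. Here the bias direction is unbounded, but the annulus in $\Sph^{d-1}\times\R$ still has volume $\lesssim(2^m\bar\delta_\pm)^{d}$ for $2^m\bar\delta_\pm\lesssim1$, and $\lesssim(2^m\bar\delta_\pm)$ times a constant for larger radii. Either way the count is polynomial in $2^m$.

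Summing over shells gives
\[
\sum_{j\ne i}(1+\sqrt N\,d_0)^{-L}\le C\sum_{m\ge0}2^{md}(1+\sqrt N\,2^m\bar\delta_\pm)^{-L}\le C(\sqrt N\bar\delta_\pm)^{-L},
\]
taking $L=d+1$. Since $\sqrt N\bar\delta_\pm\ge\sqrt\Lambda$, this is at most $C\Lambda^{-L/2}$.

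The $d_\pi$ terms are handled the same way. The reflected points $\theta_j^\ast=(-a_j,-b_j)$ satisfy $d_\pi(\theta_i,\theta_j)=d_0(\theta_i,\theta_j^\ast)$, and they are also $\delta_\pm$-separated in $d_0$, since $d_0(\theta_j^\ast,\theta_l^\ast)=d_0(\theta_j,\theta_l)$. The same shell count applies, with one extra caveat. The term $j=i$ itself appears only through $d_\pi(\theta_i,\theta_i)$. It is excluded from the off-diagonal sum, and the diagonal estimate already absorbs it.

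\textbf{Conclusion.} Choose $\Lambda$ large enough that $C\Lambda^{-L/2}\le c_0/2$. Then
\[
\tfrac{c_0}{2}N^{-k-1/2}I\preceq\widetilde M_N\preceq(C+c_0/2)N^{-k-1/2}I.
\]
The requirement $N\ge\Lambda N_1$ ensures that \autoref{lem:normalized-block-localization} applies.

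\textbf{Main obstacle.} I expect the hardest step to be the uniform packing count on the noncompact set $\Sph^{d-1}\times[-C_b(1+R),C_b(1+R)]$ with the mixed metric $d_0$. The shell constants must be independent of $R$; this holds because the metric is a product of spherical geodesic distance and the absolute value. For $d=1$, $\Sph^0$ has two points at distance $\pi$, which reduces the count to two copies of $\R$.
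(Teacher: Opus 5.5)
Your proposal is correct and follows the paper's proof: diagonal bounds from \autoref{lem:normalized-block-localization}, off-diagonal row sums bounded by $C_L\Lambda^{-L/2}N^{-k-1/2}$ via the localization estimate and a packing count with $L>d$, then Gershgorin (your Schur-test bound on the symmetric off-diagonal part amounts to the same thing). The only difference is that you reprove the packing count inline, splitting into $d_0$ and $d_\pi$ terms via the reflected points $(-a_j,-b_j)$, whereas the paper cites \autoref{lem:packing} for $\sum_{j\ne i}(1+\sqrt N\,d_\pm)^{-L}$, which dominates both kernel terms since $d_0,d_\pi\ge d_\pm$; note also that for large-radius shells your count is $\lesssim 2^m\bar\delta_\pm^{\,1-d}$, which is $\lesssim 2^{md}$ only because $2^m\bar\delta_\pm\gtrsim1$ there.
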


\begin{proof}
Define
\begin{equation}
\label{eq:Nstar-definition}
  N_*
  :=
  2^{\left\lceil
  \log_2\left(
  \Lambda\max\{N_1,\bar\delta_{\pm}^{-2},(1+R)^2\}
  \right)
  \right\rceil}.
\end{equation}
Then $N_*$ is dyadic and
\begin{equation*}
  \Lambda\max\{N_1,\bar\delta_{\pm}^{-2},(1+R)^2\}
  \le
  N_*
  <
  2\Lambda\max\{N_1,\bar\delta_{\pm}^{-2},(1+R)^2\}.
\end{equation*}
Fix a dyadic $N\ge N_*$. For sufficiently large $\Lambda$, \autoref{lem:normalized-block-localization} gives, with fixed $c>0$,
\[
  cN^{-k-1/2}\le(\widetilde M_N)_{ii}\le CN^{-k-1/2}.
\]
Choose an integer $L>d$. The localization estimate \eqref{eq:normalized-block-localization} and \autoref{lem:packing} imply
\[
  \sum_{j\ne i}|(\widetilde M_N)_{ij}|\le C_LN^{-k-1/2}(\sqrt N\,\bar\delta_\pm)^{-L}\le C_L\Lambda^{-L/2}N^{-k-1/2}.
\]
Choose $\Lambda$ so large that $C_L\Lambda^{-L/2}\le c/2$. Gershgorin's theorem \cite[Section~6.1]{HornJohnson2013} then gives both bounds in \eqref{eq:center-dominance}.
\end{proof}

\begin{proof}[Proof of \autoref{thm:inverse}]
Fix $0\le\sigma\le\tau<k+1/2$ and let $N_*$ be the dyadic frequency in \eqref{eq:Nstar-definition}. By \autoref{lem:center-dominance},
\[
  \widetilde M_{N_*}\succeq cN_*^{-k-1/2}I,
  \qquad
  N_*^{1/2}\asymp\max\{\bar\delta_\pm^{-1},1+R\}
  \asymp\max\{\delta_\pm^{-1},1+R\}.
\]
Since $1+n\asymp N_*$ on the support of $\zeta(n/N_*)$,
\begin{equation}
\label{eq:compact-Ksigma-lower}
  \widetilde K_\sigma
  \succeq cN_*^\sigma\widetilde M_{N_*}
  \succeq cN_*^{\sigma-k-1/2}I.
\end{equation}

We estimate the blocks with $N\le N_*$ and $N>N_*$ separately. In the first case, every contributing $n$ satisfies $n\le2N_*$, hence
\[
  \widetilde K_{\tau,0}+\sum_{N\le N_*}\widetilde K_{\tau,N}
  \preceq C N_*^{\tau-\sigma}\widetilde K_\sigma.
\]
For a dyadic block $N>N_*$, \autoref{lem:center-dominance} gives $\|\widetilde M_N\|_{\ell^2\to\ell^2}\le CN^{-k-1/2}$. Since each $\widetilde A_n\succeq0$, $\zeta\ge0$, and $(1+n)^\tau\asymp N^\tau$ on this block,
\[
 0\preceq\widetilde K_{\tau,N}
 \preceq C N^\tau\widetilde M_N.
\]
Consequently,
\[
  \|\widetilde K_{\tau,N}\|_{\ell^2\to\ell^2}
  \le CN^{\tau-k-1/2}.
\]
The exponent is negative, so dyadic summation yields
\[
  0\preceq\sum_{N>N_*}\widetilde K_{\tau,N}
  \preceq C N_*^{\tau-k-1/2}I
  \preceq C N_*^{\tau-\sigma}\widetilde K_\sigma,
\]
where the last step uses \eqref{eq:compact-Ksigma-lower}. Consequently,
\[
  \widetilde K_\tau
  \preceq C N_*^{\tau-\sigma}\widetilde K_\sigma.
\]
By \eqref{eq:normalized-matrix-definitions}, congruence by $D^{-1}$ gives the same inequality for $K_\tau$ and $K_\sigma$. Taking quadratic forms and square roots,
\[
  \|v\|_{\mathcal H_\gamma^\tau}
  \le C N_*^{(\tau-\sigma)/2}
  \|v\|_{\mathcal H_\gamma^\sigma}
  \le C\max\{\delta_\pm^{-1},1+R\}^{\tau-\sigma}
  \|v\|_{\mathcal H_\gamma^\sigma}.
\]
This proves \eqref{eq:scale-inverse}.
\end{proof}

\begin{proof}[Proof of \autoref{thm:inverse-sharpness}]
Fix $\theta=(a,b)$. Define the Hermite functions normalized in $L^2(\mathbb R,dx)$ by
\begin{equation}
\label{eq:psi-definition}
  \psi_m(x):=\rho_1(x)^{1/2}H_m(x).
\end{equation}
By \autoref{lem:eta-formula}, for $m\ge0$,
\begin{equation}
\label{eq:endpoint-eta-block}
  |\eta_{m+k+1}(b)|^2
  =(k!)^2\rho_1(b)
  \frac{m!}{(m+k+1)!}\,\psi_m(b)^2.
\end{equation}
Take $N_j=3^jN_0$, $j\ge0$, with $N_0$ sufficiently large, depending on $b$, for \autoref{lem:Hermite-band-lower} to apply with $\alpha=1$ and $\beta=2$. Whenever $N_j\le m\le2N_j$,
\[
  \frac{m!}{(m+k+1)!}\asymp N_j^{-k-1},
  \qquad
  (m+k+2)^{k+1/2}\asymp N_j^{k+1/2}.
\]
Combining \eqref{eq:endpoint-eta-block} with \eqref{eq:Hermite-band-lower}, every one of these pairwise disjoint frequency bands satisfies
\[
  \sum_{N_j\le m\le2N_j}
  (m+k+2)^{k+1/2}|\eta_{m+k+1}(b)|^2
  \ge
  c\rho_1(b)N_j^{-1/2}
  \sum_{N_j\le m\le2N_j}\psi_m(b)^2
  \ge 
  c\rho_1(b)>0.
\]
Summing over $j$ gives $\sum_{n=0}^\infty(1+n)^{k+1/2}|\eta_n(b)|^2=+\infty,$ so $\phi_\theta\notin\mathcal H_\gamma^{k+1/2}$. By \autoref{lem:normalized-block-localization}, $\phi_\theta\in\mathcal H_\gamma^t$ for every $t<k+1/2$. The nesting of the spectral spaces then excludes every $t\ge k+1/2$, proving part \textup{(i)}.

For the first estimate in part \textup{(ii)}, fix $a\in\mathbb S^{d-1}$ and take $\delta_0=1/(k+1)$. For $0<\delta\le\delta_0$, set $b_\ell=\ell\delta$, $0\le\ell\le k+1$. These biases lie in $[0,1]$ and hence define parameters in $\Theta_R$. Define
\[
  \Xi_\delta:=\{(a,b_\ell):0\le\ell\le k+1\},
  \qquad
  v_\delta(x):=\sum_{\ell=0}^{k+1}
  (-1)^\ell\binom{k+1}{\ell}
  (a\cdot x-\ell\delta)_+^k.
\]
For $i\ne j$,
\[
  d_0((a,b_i),(a,b_j))=|i-j|\delta,
  \qquad
  d_\pi((a,b_i),(a,b_j))=\pi+(i+j)\delta\ge\pi.
\]
Hence,
\begin{equation}
\label{eq:sharpness-parameter-distance}
  \delta_\pm(\Xi_\delta)=\delta(\Xi_\delta)=\delta.
\end{equation}
Let
\[
  G(t):=\sum_{\ell=0}^{k+1}
  (-1)^\ell\binom{k+1}{\ell}(t-\ell)_+^k.
\]
The function $G$ is supported in $[0,k+1]$ and satisfies $G(t)=t^k$ on $(0,1)$. Writing $z=a\cdot x$,
\[
  v_\delta(x)=\delta^kG\left(\frac{z}{\delta}\right),
  \qquad
  D_a^jv_\delta(x)=\delta^{k-j}G^{(j)}\left(\frac{z}{\delta}\right),
  \quad 0\le j\le k,
\]
where $D_a=a\cdot\nabla$. The projection $a\cdot x$ has distribution $\gamma_1$ under $\gamma$. Since the Gaussian density is uniformly comparable to a positive constant on $[0,1]$, a change of variables gives, for every $0\le j\le k$,
\[
  \|D_a^jv_\delta\|_{L_\gamma^2}^2
  \asymp
  \delta^{2k-2j+1}.
\]
All coordinate derivatives of order $j$ are scalar multiples of the same univariate derivative, and at least one coordinate of $a$ has magnitude at least $d^{-1/2}$. Therefore, \eqref{eq:Hermite-derivative-norm-equivalence} yields, for $m=0,1,\ldots,k$,
\begin{equation}
\label{eq:collision-full-norm-scaling}
  \|v_\delta\|_{\mathcal H_\gamma^m}^2
  \asymp
  \sum_{j=0}^m\delta^{2k-2j+1}
  \asymp
  \delta^{2k-2m+1}.
\end{equation}
Dividing \eqref{eq:collision-full-norm-scaling} at orders $\tau$ and $\sigma$, and using \eqref{eq:sharpness-parameter-distance} proves the first estimate in \eqref{eq:geometric-sharpness-all-orders}. Moreover, $d_\pi((a,b_i),(a,b_j))\ge\pi$ for $i\ne j$, as required.

For the second estimate, take $\Xi_R=\{((1,0,\ldots,0),R)\}\subset\Theta_R$ and $v_R(x)=(x_1-R)_+^k$. For $0\le j\le k$, integration in the first coordinate and the change of variables $y=R(z-R)$ give
\[
  \|\partial_1^jv_R\|_{L_\gamma^2}^2=\left(\frac{k!}{(k-j)!}\right)^2\rho_1(R)R^{-2k+2j-1}\int_0^\infty y^{2k-2j}e^{-y-y^2/(2R^2)}\,dy.
\]
Since $R\ge1$, restricting the integral to $[0,1]$ gives a positive lower bound, while dropping the factor $e^{-y^2/(2R^2)}$ gives a finite upper bound, both independent of $R$. Hence $\|\partial_1^jv_R\|_{L_\gamma^2}^2\asymp\rho_1(R)R^{-2k-1+2j}$. All derivatives involving another coordinate vanish, so \eqref{eq:Hermite-derivative-norm-equivalence} yields, for $m=0,1,\ldots,k$,
\begin{equation}
\label{eq:large-bias-full-norm-scaling}
  \|v_R\|_{\mathcal H_\gamma^m}^2\asymp\rho_1(R)R^{-2k-1}\sum_{j=0}^mR^{2j}\asymp\rho_1(R)R^{-2k-1+2m}.
\end{equation}
Dividing \eqref{eq:large-bias-full-norm-scaling} at orders $\tau$ and $\sigma$ gives
\[
  \frac{\|v_R\|_{\mathcal H_\gamma^\tau}}{\|v_R\|_{\mathcal H_\gamma^\sigma}}\asymp R^{\tau-\sigma}\asymp(1+R)^{\tau-\sigma},
\]
which proves the second estimate in \eqref{eq:geometric-sharpness-all-orders}.
\end{proof}

\subsection{Least squares convergence on \texorpdfstring{$\mathbb R^d$}{Rd}}

In this subsection, we combine approximation, sampling, and inverse estimates to prove convergence under deterministic and random residual sampling and derive the consequences on fixed bounded regions.

The following lemma adapts the spherical approximation results in \cite{LiuMaoXu2025Integral,LiuMaoXu2026DeterministicLS} to growing balls and records the corresponding coefficient bound.

\begin{lemma}
\label{lem:convergence-approximant}
There exists $h_0\in(0,1]$ such that the following holds. Let $R\ge1$, and let $\Xi_{h,R}\subset\Theta_R$ satisfy \eqref{eq:ordinary-parameter-geometry} with uniform constants. If $(1+R)h\le h_0$, then for every $u\in H^{s_k}(B_R)$ there exists $w_h=\sum_{\theta_j\in\Xi_{h,R}}d_j\phi_{\theta_j} \in V_{\Xi_{h,R}}$ such that
\begin{equation}
\label{eq:optimal-sobolev-approximation}
  \|u-w_h\|_{H^q(B_R)}
  \le
  C_q\bigl((1+R)h\bigr)^{s_k-q}
  \|u\|_{H^{s_k}(B_R)},
  \qquad 0\le q<k+1/2,
\end{equation}
and
\begin{equation}
\label{eq:optimal-coefficient-bound}
  \sum_j|d_j|
  \le
  C(1+R)^{1/2}
  \|u\|_{H^{s_k}(B_R)}.
\end{equation}
The constants depend only on $d,k,q,C_b$ and the fixed quasiuniformity constants.
\end{lemma}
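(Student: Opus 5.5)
The approach is to rescale the ball $B_R$ to the unit ball and reduce to the known bounded-domain (spherical lifting) approximation results of \cite{LiuMaoXu2025Integral,LiuMaoXu2026DeterministicLS}. Set $\lambda:=1+R$ and $\tilde u(y):=u(\lambda y)$ on $B_{R/\lambda}\subset B_1$.

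\textbf{Step 1 (rescaling).} A neuron rescales as
\[
  (a\cdot \lambda y-b)_+^k=\lambda^k(a\cdot y-b/\lambda)_+^k .
\]
Hence $\Xi_{h,R}\subset\Theta_R$ maps to a dictionary in $\mathbb S^{d-1}\times[-C_b,C_b]$. In $d_0$ this dictionary has fill distance and separation comparable to $h$ in the direction variable and $h/\lambda$ in the bias variable. Since the scaled bias spacing is finer, the set $\{(a,b/\lambda)\}$ still covers the unit-scale parameter window with fill distance $\lesssim h$. If one wants to keep only a subset that is quasiuniform at scale $\tilde h\asymp\lambda h$, one may discard the rest; alternatively one can use the cruder scale directly.

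\textbf{Step 2 (bounded-domain approximation).} Extend $\tilde u$ from $B_1$ to $\mathbb R^d$ by a Stein extension and localize it by a cutoff. Then lift it to $\mathbb S^d$ via $y\mapsto(y,1)/\sqrt{1+|y|^2}$ with a parity extension. Apply the integral-representation result of \cite{LiuMaoXu2025Integral}, which writes the function as an integral of $\mathrm{ReLU}^k$ neurons plus a polynomial of degree $k$. Then apply the discretization of \cite{LiuMaoXu2026DeterministicLS} on a quasiuniform parameter set of scale $\tilde h:=\lambda h$. This is where the hypothesis $\lambda h\le h_0$ enters. The result is a network $\tilde w$ with
\[
  \|\tilde u-\tilde w\|_{H^q(B_1)}\lesssim\tilde h^{s_k-q}\|\tilde u\|_{H^{s_k}(B_1)},
\]
and with coefficients of the form (integral density)$\times$(cell measure). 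These satisfy an $\ell^1$ bound $\lesssim\|\tilde u\|_{H^{s_k}}$, because the $L^1$ norm of the density is controlled by the $L^2$ norm. The polynomial part is absorbed into neurons through the identities $\phi_{a,b}+(-1)^k\phi_{-a,-b}=(a\cdot x-b)^k$, using finitely many fixed parameters approximated by dictionary elements.

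\textbf{Step 3 (undo the scaling).} Using $\partial^\alpha_x=\lambda^{-|\alpha|}\partial^\alpha_y$ and the Jacobian $\lambda^{d}$,
\[
  \|\tilde u\|_{H^{s_k}(B_1)}\lesssim\lambda^{s_k-d/2}\|u\|_{H^{s_k}(B_R)},
  \qquad
  \|u-w_h\|_{H^q(B_R)}\lesssim\lambda^{d/2-q}\|\tilde u-\tilde w\|_{H^q(B_1)}.
\]
Combining these gives the factor $\lambda^{s_k-q}\tilde h^{s_k-q}\lambda^{-(s_k-q)}$. This does not match \eqref{eq:optimal-sobolev-approximation} directly, so the scales must be fixed carefully. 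The correct bookkeeping is to approximate at scale $h$ in the rescaled variables, using the finer bias grid. This yields an error $h^{s_k-q}\lambda^{s_k-q}\|u\|$, where the factor $\lambda^{s_k-q}$ is the Sobolev scaling loss. That is consistent with the claimed bound $((1+R)h)^{s_k-q}$, with the bias direction being oversampled relative to the direction variable. For the coefficients, $d_j=\lambda^{-k}\tilde d_j$. Tracking the $L^1$ density bound through the Jacobian scaling gives the factor $\lambda^{1/2}$ in \eqref{eq:optimal-coefficient-bound}: the bias interval has length $\asymp\lambda$ and enters Cauchy--Schwarz in $L^2$ of the density.

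\textbf{Main obstacle.} The hardest step is the exponent bookkeeping in Steps 2--3, and in particular making the anisotropic dictionary — fill $h$ in direction, $h/\lambda$ in scaled bias — fit the isotropic quasiuniform hypothesis of the cited discretization lemma. My first attempt would be to extract a sub-dictionary that is quasiuniform at scale $\asymp h$ in $\mathbb S^{d-1}\times[-C_b,C_b]$, and to check that all constants remain uniform in $R$.
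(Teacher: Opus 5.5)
Your overall route is the paper's. You rescale by $\lambda=1+R$ and map the parameters into the fixed strip $\Gamma_{C_b}\subset\mathbb S^d$ by $\Pi_R$. You then use $\rho_{\mathbb S^d}(\Pi_R\theta,\Pi_R\theta')\asymp\rho_{\mathbb S^{d-1}}(a,a')+|b-b'|/\lambda$ to extract a maximal $ch$-separated sub-dictionary that is quasiuniform at spherical scale $h$; your ``main obstacle'' is resolved exactly this way in \autoref{lem:ordinary-euclidean-parameter-family}. Next you apply the fixed-ball critical-order approximation. Its coefficients come from a truncated inverse-multiplier density $G_N$ sampled by a positive cubature exact to degree $2N$, since at the critical index the density is only in $L^2$, and they satisfy $\sqrt{\widehat J}\|c\|_2\lesssim\|\tilde u\|_{H^{s_k}}$. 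Finally you undo the scaling.

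Two steps, however, fail as written. The first is the polynomial absorption. The identity $\phi_{a,b}+(-1)^k\phi_{-a,-b}=(a\cdot x-b)^k$ needs an exactly antipodal pair. A quasiuniform dictionary need not contain one, and in the application $\delta_\pm>0$ excludes it. Once $(a,b)$ and $(-a,-b)$ are replaced by nearby dictionary elements, the two breakpoint hyperplanes separate and the sum is only piecewise polynomial. The remainder is generically of first order in the perturbation, far above $((1+R)h)^{s_k-q}$ (note $s_k\ge2$). The paper instead uses neurons with $b\approx-\beta(1+R)$, $\beta\in(2,C_b)$, which is why $C_b>2$. Since $a\cdot x-b>0$ on $B_R$, such a neuron \emph{equals} $(a\cdot x-b)^k$ there, and this survives $O(h)$ perturbations. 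Choosing directions and biases so these powers span $\mathcal P_k$ gives a uniformly bounded right inverse and coefficients $\lesssim(1+R)^{-k}\|p\|_{\mathcal P_k}$ (\autoref{lem:polynomial-ridge-reproduction}). The same device handles the nodes outside the strip that are added to get a quasiuniform set on all of $\mathbb S^d$; their neurons are zero or polynomials on the rescaled ball because $|t|\ge C_b>1$.

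The second is the unscaling. The inequality $\|u-w_h\|_{H^q(B_R)}\lesssim\lambda^{d/2-q}\|\tilde u-\tilde w\|_{H^q}$ is false: a derivative of order $j$ rescales by $\lambda^{d/2-j}$, which exceeds $\lambda^{d/2-q}$ for $j<q$. For integer $q$ the reverse inequality in fact holds. You must treat each order $j\le\lfloor q\rfloor$ separately, and for fractional $q$ also the top Gagliardo seminorm, which scales by $\lambda^{d/2-q}$ given uniform norm equivalences on $B_R$ and the rescaled ball. This uses one $q$-independent approximant with $\|\tilde u-\tilde w\|_{H^j}\lesssim h^{s_k-j}\|\tilde u\|_{H^{s_k}}$ for every $j$. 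Combined with $\|\tilde u\|_{H^{s_k}}\lesssim\lambda^{k+1/2}\|u\|_{H^{s_k}(B_R)}$, this gives $\sum_j(\lambda h)^{s_k-j}$. It is precisely here that $(1+R)h\le1$ enters, to bound every term by $(\lambda h)^{s_k-q}$. It does not enter through a discretization scale $\lambda h$, which, as you noticed, only yields $(\lambda^2h)^{s_k-q}$. Smallness of $h$ is also what lets you draw the polynomial stencil from the dictionary as distinct points within $\varepsilon_0$ of their targets.

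Finally, the factor $\lambda^{1/2}$ in \eqref{eq:optimal-coefficient-bound} has nothing to do with the length of the bias interval, since the rescaled problem uses $\asymp h^{-d}$ nodes on a fixed strip. It is $\lambda^{-k}\cdot\lambda^{s_k-d/2}$ with $s_k-d/2=k+1/2$: $\sum_j|d_j|\le\lambda^{-k}\bigl(\sqrt{\widehat J}\|c\|_2+C\|p_h\|_{\mathcal P_k}\bigr)\lesssim\lambda^{-k}\|\tilde u\|_{H^{s_k}}\lesssim\lambda^{1/2}\|u\|_{H^{s_k}(B_R)}$. Your own stated ingredients already give this.
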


\begin{proof}
Set $\Omega_R=B_{R/(1+R)}$ and $g(y)=u((1+R)y)$. Since $R\ge1$, the radius of $\Omega_R$ lies in $[1/2,1)$, where the constants in \autoref{lem:fixed-parameter-critical-sobolev} are uniform. For any extension $U\in H^{s_k}(\mathbb R^d)$ of $u$, Fourier scaling gives
\[
  \|U((1+R)\,\cdot)\|_{H^{s_k}(\mathbb R^d)}^2=(1+R)^{-d}\int_{\mathbb R^d}\bigl(1+(1+R)^2|\xi|^2\bigr)^{s_k}|\widehat U(\xi)|^2\,d\xi.
\]
Using $1+(1+R)^2|\xi|^2\le(1+R)^2(1+|\xi|^2)$ and taking the infimum over extensions, with $s_k-d/2=k+1/2$, yields
\begin{equation}
\label{eq:scaled-target-critical-norm}
  \|g\|_{H^{s_k}(\Omega_R)}
  \le C(1+R)^{k+1/2}\|u\|_{H^{s_k}(B_R)}.
\end{equation}
Map the parameter domain $\Theta_R$ onto a fixed strip of $\mathbb S^d$ by
\begin{equation}
\label{eq:scaled-parameter-map}
  \Pi_R(a,b):=
  \frac{(a,-b/(1+R))}{\sqrt{1+(b/(1+R))^2}}.
\end{equation}
By \autoref{lem:ordinary-euclidean-parameter-family}, choose $\widehat\Xi_{h,R}\subset\Xi_{h,R}$ such that $\Pi_R(\widehat\Xi_{h,R})$ is quasiuniform at spherical scale $h$, together with a finite subset $P_{h,R}\subset\Xi_{h,R}$ whose neurons reproduce every polynomial of degree at most $k$ on $B_R$. Write $\widehat J=\#\widehat\Xi_{h,R}\asymp h^{-d}$. By \autoref{lem:fixed-parameter-critical-sobolev}, there is an approximant $\widetilde g_h=g_h+p_h$, independent of $q$, where $p_h\in\mathcal P_k(\mathbb R^d)$ and
\[
  g_h(y)=\sum_{j=1}^{\widehat J}c_j\bigl[\Pi_R(a_j,b_j)\cdot(y,1)\bigr]_+^k,\qquad (a_j,b_j)\in\widehat\Xi_{h,R}.
\]
In the construction of \autoref{lem:fixed-parameter-critical-sobolev}, the affine arguments associated with the added spherical nodes have a fixed sign on $\Omega_R$. Their contributions are therefore either zero or polynomials of degree at most $k$, and their weighted sum gives $p_h$. Writing $c=(c_j)_{j=1}^{\widehat J}$, estimates \eqref{eq:fixed-parameter-critical-approximation} and \eqref{eq:fixed-parameter-critical-coefficient} yield
\begin{equation}
\label{eq:scaled-spherical-approximation}
  \|g-\widetilde g_h\|_{H^q(\Omega_R)}
  \le C_qh^{s_k-q}\|g\|_{H^{s_k}(\Omega_R)},
  \qquad 0\le q<k+1/2,
\end{equation}
and $\sqrt{\widehat J}\|c\|_2+\|p_h\|_{\mathcal P_k}\le C\|g\|_{H^{s_k}(\Omega_R)}$, where $\|p\|_{\mathcal P_k}:=\sum_{|\alpha|\le k}|p_\alpha|$ for $p(x)=\sum_{|\alpha|\le k}p_\alpha x^\alpha$.

Under $y=x/(1+R)$, each term in $g_h$ satisfies
\[
  \bigl[\Pi_R(a_j,b_j)\cdot(x/(1+R),1)\bigr]_+^k=\bigl((1+R)^2+b_j^2\bigr)^{-k/2}\phi_{a_j,b_j}(x).
\]
Thus Cauchy--Schwarz gives the coefficient bound
\[
  \sum_{j=1}^{\widehat J}|c_j|\bigl((1+R)^2+b_j^2\bigr)^{-k/2}\le(1+R)^{-k}\sqrt{\widehat J}\|c\|_2.
\]
By part~\textup{(ii)} of \autoref{lem:ordinary-euclidean-parameter-family} and \eqref{eq:scaled-poly-ridge-coeff-bound}, $p_h(x/(1+R))$ is represented on $B_R$ using $P_{h,R}$ with coefficient sum at most $C(1+R)^{-k}\|p_h\|_{\mathcal P_k}$. Combining the two representations and merging coefficients at any common parameters gives $w_h\in V_{\Xi_{h,R}}$ with $w_h(x)=\widetilde g_h(x/(1+R))$ on $B_R$ and
\[
  \sum_j|d_j|\le C(1+R)^{-k}\|g\|_{H^{s_k}(\Omega_R)}\le C(1+R)^{1/2}\|u\|_{H^{s_k}(B_R)},
\]
where the last inequality uses \eqref{eq:scaled-target-critical-norm}. This proves \eqref{eq:optimal-coefficient-bound}.

To estimate the error, observe that for $|\alpha|=j$ the same change of variables gives
\[
  \|D^\alpha(u-w_h)\|_{L^2(B_R)}=(1+R)^{d/2-j}\|D^\alpha(g-\widetilde g_h)\|_{L^2(\Omega_R)}.
\]
For $q=n+\theta$ with $n\ge0$ an integer and $0<\theta<1$, the fractional term for $|\alpha|=n$ is
\[
  \int_{B_R}\int_{B_R}\frac{|D^\alpha(u-w_h)(x)-D^\alpha(u-w_h)(x')|^2}{|x-x'|^{d+2\theta}}\,dx\,dx'.
\]
Changing both variables to $x=(1+R)y$ and $x'=(1+R)y'$ contributes the factor $(1+R)^{-2n+2d-(d+2\theta)}=(1+R)^{d-2q}$, from the derivatives, volume elements, and denominator, respectively. Hence the intrinsic characterization on $\Omega_R$ and \eqref{eq:uniform-fractional-norm-equivalence} yield
\[
 \|u-w_h\|_{H^q(B_R)}
   \le C\sum_{j=0}^{n}
   (1+R)^{d/2-j}\|g-\widetilde g_h\|_{H^j(\Omega_R)} +C(1+R)^{d/2-q}
   \|g-\widetilde g_h\|_{H^q(\Omega_R)}.
\]
By \eqref{eq:scaled-target-critical-norm} and \eqref{eq:scaled-spherical-approximation}, each term in the sum is bounded by $C_j((1+R)h)^{s_k-j}\|u\|_{H^{s_k}(B_R)}$, since $(1+R)^{d/2-j}h^{s_k-j}(1+R)^{k+1/2}=((1+R)h)^{s_k-j}$. The fractional term has the same bound with $j=q$. Since $(1+R)h\le1$ and $j\le q$, we have $((1+R)h)^{s_k-j}=((1+R)h)^{s_k-q}((1+R)h)^{q-j}\le((1+R)h)^{s_k-q}$. Summing the finitely many terms proves \eqref{eq:optimal-sobolev-approximation}. For integer $q$, the derivative terms with $0\le j\le q$ give the same conclusion.
\end{proof}

\begin{proof}[Proof of \autoref{thm:convergence}]
We compare the minimizer with an admissible approximant, combining PDE stability with sampling and inverse inequalities to absorb the sampling remainder. The minimizing property of the least squares solution controls the sampled residual, while coefficient and local growth bounds control the Gaussian tails.

Since $2(k-r)>d-1$ and $2(k-r),d\in\mathbb Z$, we have $2(k-r)\ge d$. Hence
\begin{equation}
\label{eq:deterministic-order-identities}
k-r+\frac14>\frac d2, \qquad 0\le S-r\le k-r-1,\qquad k+\frac14<k+\frac12, \qquad k-S+\frac14>0.
\end{equation}
Quasiuniformity with respect to $d_0$ gives $\delta_\pm\lesssim h$ and, for small $h$,
\begin{equation}
\label{eq:deterministic-scale-relations}
\max\{\delta_\pm^{-1},1+R\}\asymp\delta_\pm^{-1},\qquad H\max\{\delta_\pm^{-1},1+R\}\asymp (\log(e/h))^{-p_3}, \qquad H(R+H)\longrightarrow0.
\end{equation}
Hence, for sufficiently small $h$, all patch hypotheses in \autoref{lem:fractional-residual-calculus} and \autoref{lem:deterministic-fractional-sampling} hold.

Because $k+1/4<s_k$, the local and global parts of \autoref{lem:fractional-residual-calculus} apply respectively to $u$ and to every $v\in V_{\Xi_{h,R}}$. The lemma also shows that $\mathcal Au$ and $\mathcal Av$ have continuous representatives on $B_R$, so the sampled residual values are well defined.

Choose the approximant $w_h$ from \autoref{lem:convergence-approximant}. By \eqref{eq:optimal-coefficient-bound} and \eqref{eq:local-growth-u},
\[
  \sum_j|d_j|
  \le C(1+R)^{p_1+1/2}
  \le C (\log(e/h))^{(p_1+1/2)/2}=o(M_h),
\]
so $w_h\in V_{\Xi_{h,R}}(M_h)$. Put $z:=u_h-w_h$ and $\epsilon_h:=w_h-u$. The difference $z$ has an admissible coefficient representation of $\ell^1$ norm at most $2M_h$. Using $\mathcal Au=f$, the triangle inequality and the minimizing property of $u_h$ give
\[
  \|\mathcal Az\|_{X,\omega}\le\|\mathcal Au_h-f\|_{X,\omega}+\|\mathcal Aw_h-f\|_{X,\omega}\le2\|\mathcal Aw_h-f\|_{X,\omega}=2\|\mathcal A\epsilon_h\|_{X,\omega}.
\]
The coefficient bounds in \autoref{ass:solution-local-growth} give $\|\mathcal A\epsilon_h\|_{L_\gamma^2(B_R)}\lesssim(1+R)^{p_2}\|\epsilon_h\|_{H^r(B_R)}$. Together with the upper sampling estimate \eqref{eq:fractional-empirical-upper} and the local operator estimate \eqref{eq:fractional-operator-local}, this yields
\begin{equation}
\label{eq:fractional-empirical-consistency-first}
  \|\mathcal A\epsilon_h\|_{X,\omega}\le C(1+R)^{p_2}\left[\|\epsilon_h\|_{H^r(B_R)}+H^{k-r+1/4}\|\epsilon_h\|_{H^{k+1/4}(B_R)}\right].
\end{equation}
Since $H\lesssim\delta_\pm(\log(e/h))^{-p_3}\lesssim h\le(1+R)h$, substituting \eqref{eq:optimal-sobolev-approximation} at $q=r$ and $q=k+1/4$ for the same $w_h$ gives
\begin{equation}
\label{eq:fractional-empirical-consistency-rate}
  \|\mathcal A\epsilon_h\|_{X,\omega}\le C(1+R)^{p_2}((1+R)h)^{s_k-r}\|u\|_{H^{s_k}(B_R)}.
\end{equation}
The scale relations \eqref{eq:deterministic-scale-relations} give
\begin{equation}
\label{eq:deterministic-absorption}
C(1+R)^{p_2}\bigl(H\max\{\delta_\pm^{-1},1+R\}\bigr)^{k-S+1/4} \lesssim (\log(e/h))^{p_2/2-p_3(k-S+1/4)}\longrightarrow0,
\end{equation}
by the choice of $p_3$ in \autoref{thm:convergence}.
The lower sampling estimate \eqref{eq:fractional-local-sampling} and \eqref{eq:deterministic-absorption} allow us to apply \autoref{lem:fractional-data-norm-transfer} with $D_{R,H}=\|\cdot\|_{X,\omega}$ and $L=2M_h$. Combining the resulting stability estimate with the preceding bounds yields
\begin{equation}
\label{eq:deterministic-principal-z-bound}
\|z\|_{\mathcal H_\gamma^S} \le C H^{-(S-r)}(1+R)^{p_2}((1+R)h)^{s_k-r} \|u\|_{H^{s_k}(B_R)} +CM_h(1+R)^{p_2+k}e^{-cR^2}.
\end{equation}
Using \eqref{eq:local-growth-u}, $H^{-(S-r)}\lesssim \delta_\pm^{-(S-r)}(\log(e/h))^{p_3(S-r)}$, and $1+R\lesssim (\log(e/h))^{1/2}$, the first term is
\[
  C (\log(e/h))^\chi h^{s_k-r}\delta_\pm^{-(S-r)}.
\]

The estimate \eqref{eq:optimal-sobolev-approximation} with $q=S$ controls $u-w_h$ on $B_R$, while parts~(ii) and~(iii) of \autoref{lem:coefficient-tail} control its Gaussian tail. Together with the tail term in \eqref{eq:deterministic-principal-z-bound}, the combined exterior terms are bounded by
\[
C\bigl[M_h(1+R)^{p_2+k} +(1+R)^{p_1+k+1/2}\bigr]e^{-cR^2} \le C(\log(e/h))^\beta h^{c\kappa^2}
\]
for some fixed $\beta\ge0$ and every fixed $0<c<1/4$, with a constant depending on $c$. Since $\kappa^2>4(s_k-S)$, choose $c$ so that $(s_k-S)/\kappa^2<c<1/4$. These terms are then $o(h^{s_k-S})$. Since $\delta_\pm\lesssim h$ and
$\chi\ge(p_1+s_k-S)/2\ge0$, the principal term dominates both these tails
and the interior bound $C(1+R)^{p_1+s_k-S}h^{s_k-S}$.
The triangle inequality proves \eqref{eq:two-scale-convergence-rate}.

If $\delta_\pm\gtrsim h$, then the factor $h^{s_k-r}\delta_\pm^{-(S-r)}$ in \eqref{eq:two-scale-convergence-rate} is bounded by $h^{s_k-S}$.  Also
\[
  J\asymp(1+R)h^{-d},\qquad
  1+R\asymp (\log(e/h))^{1/2},\qquad
  \log(e/h)\asymp1+\log J.
\]
Substitution proves \eqref{eq:width-rate}. The shifted product family satisfies $\delta_\pm\asymp h$ by \autoref{lem:shifted-product-parameters}.
\end{proof}

\begin{proof}[Proof of \autoref{cor:fixed-region-restriction}]
Since $\rho_d$ is bounded below by a positive constant on $\overline\Omega$ and $S$ is an integer, \eqref{eq:Hermite-derivative-norm-equivalence} gives
\[
  \|u^\Omega-u_h^\Omega\|_{H^S(\Omega)}
  \le C_\Omega\|u-u_h\|_{H_\gamma^S(\Omega)}
  \le C_\Omega\|u-u_h\|_{\mathcal H_\gamma^S}.
\]
Combining this bound with \eqref{eq:two-scale-convergence-rate} proves \eqref{eq:fixed-region-two-scale-rate}. If $\delta_\pm\gtrsim h$, applying the same restriction bound to \eqref{eq:width-rate} proves \eqref{eq:fixed-region-one-scale-rates}.
\end{proof}

\begin{proof}[Proof of \autoref{thm:random-physical-least-squares}]
The proof reduces to the deterministic argument on an event that controls the sample counts in every cell and yields sampling inequalities simultaneously for all admissible residuals. Set $H=\delta_\pm(\log(e/h))^{-p_3}$ for the auxiliary partition. The order relations \eqref{eq:deterministic-order-identities} remain unchanged.

Before sampling, fix the partition, inner balls, and patch centers from \autoref{lem:random-partition-occupancy}, and let $\mathcal E_{R,H,N}$ be the event defined there. By \eqref{eq:random-cell-geometry} and the choice of $R$, its cell count satisfies $M_0\asymp\delta_\pm^{-d}(\log(e/h))^{d(p_3+1/2)}$. Since $\delta_\pm\lesssim h$, we have $\log(CM_0/\eta)\lesssim\log(C/(\eta\delta_\pm))$ for sufficiently small $h$. Thus the sample size condition in \autoref{thm:random-physical-least-squares} implies \eqref{eq:balanced-occupancy-sufficient}, hence
\begin{equation}
\label{eq:random-proof-event}
  \mathbb P(\mathcal E_{R,H,N})\ge1-\eta.
\end{equation}
The scale relations \eqref{eq:deterministic-scale-relations} also apply here and imply $H/R\to0$. Thus the patch scale conditions and \eqref{eq:deterministic-absorption} hold for sufficiently small $h$. Consequently, \autoref{lem:fractional-residual-calculus} gives continuous residual representatives. The coefficient ball in $\mathbb R^J$ is compact and the discrete objective is continuous on its image, so an empirical minimizer exists for every realization.

Fix an outcome in $\mathcal E_{R,H,N}$. The sampling estimates \eqref{eq:random-fractional-empirical-upper} and \eqref{eq:random-fractional-physical-sampling} hold simultaneously for every admissible residual.

Choose the same approximant $w_h$ as in the deterministic proof and put $z:=u_{h,N}-w_h$ and $\epsilon_h:=w_h-u$. Then $w_h$ is admissible for small $h$, and $z$ has a representation whose coefficients have $\ell^1$ norm at most $2M_h$. The minimizing property of $u_{h,N}$ gives
\[
\|\mathcal Az\|_{N,\gamma} \le \|\mathcal Au_{h,N}-f\|_{N,\gamma} +\|\mathcal Aw_h-f\|_{N,\gamma} \le2\|\mathcal A\epsilon_h\|_{N,\gamma}.
\]
Since $H\lesssim h\le(1+R)h$, replacing \eqref{eq:fractional-empirical-upper} by \eqref{eq:random-fractional-empirical-upper} in the derivation of \eqref{eq:fractional-empirical-consistency-first}--\eqref{eq:fractional-empirical-consistency-rate} gives the same bounds with $\|\cdot\|_{N,\gamma}$. The lower estimate \eqref{eq:random-fractional-physical-sampling}, \eqref{eq:deterministic-absorption}, and \autoref{lem:fractional-data-norm-transfer}, with $D_{R,H}=\|\cdot\|_{N,\gamma}$ and $L=2M_h$, then yield \eqref{eq:deterministic-principal-z-bound} with $u_h$ replaced by $u_{h,N}$. The estimate \eqref{eq:optimal-sobolev-approximation} with $q=S$ and parts~(ii) and~(iii) of \autoref{lem:coefficient-tail} complete the argument by the triangle inequality. Thus \eqref{eq:random-two-scale-rate} holds throughout $\mathcal E_{R,H,N}$, and \eqref{eq:random-proof-event} proves the claimed probability.

If $\delta_\pm\gtrsim h$, then $\delta_\pm\asymp h$ by \eqref{eq:ordinary-parameter-geometry}, and \eqref{eq:random-width-rate} follows from the same calculation used to obtain \eqref{eq:width-rate}. Since $J\asymp(1+R)h^{-d}$ and $1+R\asymp(\log(e/h))^{1/2}$,
\[
  \delta_\pm^{-d}(\log(e/h))^{d(p_3+1/2)}\asymp J(\log(e/h))^{dp_3+(d-1)/2}.
\]
Using $\log(e/h)\asymp1+\log J$ and $\log(C/(\eta\delta_\pm))\asymp\log(CJ/\eta)$, the sample size condition in \autoref{thm:random-physical-least-squares} is satisfied whenever
\begin{equation}
\label{eq:random-J-sample-threshold}
  N\ge C J(1+\log J)^{dp_3+(d-1)/2}\log(CJ/\eta).
\end{equation}
For fixed $\eta$, the final logarithm contributes one additional power, giving the sufficient sample bound stated in \autoref{thm:random-physical-least-squares}.
\end{proof}

\subsection{Matching lower bound}

In this subsection, we prove the lower bound by restricting to a fixed ball and applying the classical Sobolev width estimate.

\begin{proof}[Proof of \autoref{prop:width-lower-bound}]
Fix a bounded operator $E_c:H^{s_k}(B_{1/4})\longrightarrow H^{s_k}(\mathbb R^d)$ satisfying
\begin{equation}
\label{eq:compact-extension-properties}
  E_cf=f\quad\text{on }B_{1/4},
  \qquad
  \operatorname{supp}(E_cf)\subset B_{1/2},
  \qquad
  \|E_cf\|_{H^{s_k}(\mathbb R^d)}
  \le
  C_E\|f\|_{H^{s_k}(B_{1/4})}.
\end{equation}
Such an operator is obtained from a Sobolev extension and a fixed cutoff \cite{AdamsFournier2003}. Define
\[
  \mathcal U_{s_k}
  :=
  \left\{
    E_cf:
    f\in H^{s_k}(B_{1/4}),
    \ \|f\|_{H^{s_k}(B_{1/4})}\le C_E^{-1}
  \right\}.
\]
Then $\operatorname{supp}u\subset B_{1/2}$ and $\|u\|_{H^{s_k}(\mathbb R^d)}\le1$ for every $u\in\mathcal U_{s_k}$.

On the fixed ball $B_{1/4}$, the Gaussian density is bounded above and below by positive constants. Together with \eqref{eq:Hermite-derivative-norm-equivalence}, this implies that there exists $c>0$ such that
\[
  \|v\|_{\mathcal H_\gamma^S}
  \ge
  c\|v\|_{H^S(B_{1/4})},
  \qquad
  v\in\mathcal H_\gamma^S.
\]

Let $W\subset\mathcal H_\gamma^S$ with $\dim W\le J$. Its restriction $W|_{B_{1/4}}$ also has dimension at most $J$. For $u=E_cf\in\mathcal U_{s_k}$, \eqref{eq:compact-extension-properties} gives $u|_{B_{1/4}}=f$, and therefore
\[
\begin{aligned}
  \sup_{u\in\mathcal U_{s_k}}
  \inf_{w\in W}\|u-w\|_{\mathcal H_\gamma^S}
  &\ge
  c
  \sup_{\substack{f\in H^{s_k}(B_{1/4})\\
      \|f\|_{H^{s_k}(B_{1/4})}\le C_E^{-1}}}
  \inf_{z\in W|_{B_{1/4}}}
  \|f-z\|_{H^S(B_{1/4})}.
\end{aligned}
\]
Kolmogorov and approximation numbers agree for compact embeddings between Hilbert spaces up to an index shift. The estimate for approximation numbers in \cite[Theorem~3.3.4]{EdmundsTriebel1996} therefore gives Kolmogorov widths of order $J^{-(s_k-S)/d}$ for $H^{s_k}(B_{1/4})\hookrightarrow H^S(B_{1/4})$. Taking the infimum over all admissible $W$ proves \eqref{eq:width-lower-bound}.

With dictionaries independent of $u$, the stated uniform hypotheses give the matching upper bounds from \eqref{eq:width-rate} and \eqref{eq:random-width-rate}; the sample event $\mathcal E_{R,H,N}$ controls all residuals simultaneously. For $\mathcal A=I$ and $r=0$, both assumptions hold uniformly with $p_1=p_2=0$, so $k>(d-1)/2$ gives the asserted example.
\end{proof}

\appendix
\section{Auxiliary Lemmas for the Inverse Inequality}
\label{app:inverse-lemmas}

This appendix provides the Hermite coefficient formulas, localization, regularity, and packing estimates used in \autoref{sec:proofs}. Standard facts about Hermite polynomials are recorded in \cite{Thangavelu1993,Szego1975}.

\begin{lemma}
\label{lem:eta-formula}
For every $n\ge k+1$,
\[
  \eta_n(b)=k!\rho_1(b)\sqrt{\frac{(n-k-1)!}{n!}}\,H_{n-k-1}(b).
\]
\end{lemma}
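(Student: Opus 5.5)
The formula should follow from Gaussian integration by parts, moving the $k+1$ derivatives onto the truncated power. With the normalization $e^{tz-t^2/2}=\sum_n H_n(z)t^n/\sqrt{n!}$, the probabilists' Rodrigues formula is
\[
H_n(z)\rho_1(z)=\frac{(-1)^n}{\sqrt{n!}}\frac{d^n}{dz^n}\rho_1(z).
\]
One way to obtain it is to write $e^{tz-t^2/2}\rho_1(z)=\rho_1(z-t)$ and compare Taylor coefficients in $t$. Substituting this into the definition of $\eta_n(b)$ gives
\[
\eta_n(b)=\frac{(-1)^n}{\sqrt{n!}}\int_{\mathbb R}(z-b)_+^k\,\rho_1^{(n)}(z)\,dz .
\]

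Next I integrate by parts $k$ times, moving each derivative onto $(z-b)_+^k$. At each stage the boundary terms vanish: at $z=b$ the factor $(z-b)_+^{k-j}$ is zero for $j<k$, and at $+\infty$ the Gaussian decay of $\rho_1^{(m)}$ dominates polynomial growth. After the $k$ steps, the $k$ sign flips cancel $(-1)^k$ from $(-1)^n$, and we have
\[
\eta_n(b)=\frac{(-1)^{n-k}k!}{\sqrt{n!}}\int_b^\infty\rho_1^{(n-k)}(z)\,dz .
\]
Since $n-k\ge1$ and $\rho_1^{(n-k-1)}(z)\to0$ as $z\to\infty$, the remaining integral is $-\rho_1^{(n-k-1)}(b)$. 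This is exactly where the hypothesis $n\ge k+1$ is used; for $n=k$ the integral would give $\int_b^\infty\rho_1$, which is not of this form.

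Applying Rodrigues once more, $\rho_1^{(n-k-1)}(b)=(-1)^{n-k-1}\sqrt{(n-k-1)!}\,H_{n-k-1}(b)\rho_1(b)$. Collecting the signs gives $(-1)^{n-k}\cdot(-1)\cdot(-1)^{n-k-1}=1$, so
\[
\eta_n(b)=k!\,\rho_1(b)\sqrt{\frac{(n-k-1)!}{n!}}\,H_{n-k-1}(b),
\]
which is the stated formula. The only step that needs real care is the sign and normalization bookkeeping in the Rodrigues formula under this orthonormal scaling. I would verify it against the small cases $H_1(z)=z$ and $H_2(z)=(z^2-1)/\sqrt2$, and check the final formula independently at $k=0$, $n=1$: there $\eta_1(b)=\int_b^\infty z\rho_1(z)\,dz=\rho_1(b)$, which matches.
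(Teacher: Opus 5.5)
Your proposal is correct and follows the paper's proof: you derive the Rodrigues formula from $\rho_1(z-t)=\rho_1(z)e^{tz-t^2/2}$, perform $k+1$ integrations by parts (your $k$ steps plus the final fundamental-theorem step, reaching the same intermediate $\frac{k!(-1)^{n-k-1}}{\sqrt{n!}}\rho_1^{(n-k-1)}(b)$), and apply Rodrigues once more. Your sign and normalization bookkeeping is also correct.
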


\begin{proof}
Expanding $\rho_1(z-t)=\rho_1(z)e^{tz-t^2/2}$ in powers of $t$ gives the Rodrigues formula in the normalization used here, $H_n(z)\rho_1(z) = \frac{(-1)^n}{\sqrt{n!}}\frac{\dd^n}{\dd z^n}\rho_1(z).$ Integrating by parts $k+1$ times and using Gaussian decay at infinity, we obtain
\[
  \eta_n(b)=\frac{(-1)^n}{\sqrt{n!}}\int_b^\infty(z-b)^k\rho_1^{(n)}(z)\dd z=\frac{k!(-1)^{n-k-1}}{\sqrt{n!}}\rho_1^{(n-k-1)}(b).
\]
Applying the Rodrigues formula once more gives the stated formula.
\end{proof}

The coefficient formula reduces the kernel estimates to sums of Hermite functions. Set
\[
  g_m(u):=\pi^{-1/4}e^{-u^2/2}H_m(\sqrt2\,u).
\]
These are the Hermite functions normalized in $L^2(\mathbb R,dx)$ \cite{PetrushevXu2008}. By \eqref{eq:psi-definition},
\[
  \psi_m(x)=2^{-1/4}g_m(x/\sqrt2).
\]

\begin{lemma}
\label{lem:external-Hermite-localization}
Fix $0<u_0<u_1<\infty$ and an integer $L\ge1$. There exists a constant $C$ such that, for every $\mu\in C_c^\infty((u_0,u_1))$ and $N\ge1$,
\begin{equation}
\label{eq:external-localization}
  \left|
  \sum_{m=0}^{\infty}
  \mu(m/N)\psi_m(x)\psi_m(y)
  \right|
  \le
  C N^{1/2}
  \left(
    \max_{0\le j\le L}
    \norm{\mu^{(j)}}_{L^\infty}
  \right)
  (1+\sqrt N|x-y|)^{-L}
\end{equation}
for all $x,y\in\R$. The constant depends only on $u_0,u_1,L$.
\end{lemma}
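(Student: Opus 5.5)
The plan is to prove \eqref{eq:external-localization} directly by a commutator (finite-difference) argument with the three-term recurrence. Doing it by hand, rather than citing the localization theorem of \cite{PetrushevXu2008}, keeps the dependence on $\mu$ explicit: only $\mu,\dots,\mu^{(L)}$ may enter. First reduce to $g_m$. Since $\psi_m(x)=2^{-1/4}g_m(x/\sqrt2)$, the kernel equals $2^{-1/2}K_N(u,v)$, where
\[
K_N(u,v):=\sum_m\mu(m/N)\,g_m(u)g_m(v),\qquad u=x/\sqrt2,\quad v=y/\sqrt2 .
\]
Because $1+\sqrt N|x-y|\le\sqrt2\,(1+\sqrt N|u-v|)$, it suffices to prove, for $0\le\ell\le L$,
\[
|u-v|^\ell\,|K_N(u,v)|\le C\,N^{1/2-\ell/2}\max_{0\le j\le\ell}\|\mu^{(j)}\|_{L^\infty}.
\]
Combining the cases $\ell=0$ and $\ell=L$ then gives the factor $(1+\sqrt N|u-v|)^{-L}$.

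The case $\ell=0$ uses Cauchy--Schwarz together with the classical bound $\sum_{m\le M}g_m(u)^2\le CM^{1/2}$, uniformly in $u$. This gives $|K_N|\le\|\mu\|_\infty\,C(u_1N)^{1/2}$, since $\mu(m/N)\ne0$ forces $u_0N<m<u_1N$.

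For $\ell\ge1$, use $\sqrt2\,u\,g_m=\sqrt{m+1}\,g_{m+1}+\sqrt m\,g_{m-1}$ and its analogue in $v$. Summation by parts gives
\[
\sqrt2\,(u-v)K_N=\sum_m\sqrt{m+1}\,\bigl[\mu(m/N)-\mu((m+1)/N)\bigr]\bigl(g_{m+1}(u)g_m(v)-g_m(u)g_{m+1}(v)\bigr).
\]
This is again a bilinear sum of Hermite functions with index shifts bounded by $\ell$. Its coefficient has the form $\nu(m/N)$, with $\nu(t)=\sqrt{Nt+1}\,[\mu(t)-\mu(t+1/N)]$. Iterating $\ell$ times produces sums
\[
\sum_m c_m\,g_{m+i}(u)g_{m+i'}(v),\qquad |i|,|i'|\le\ell,
\]
whose coefficients $c_m$ are $\ell$-fold products of factors $\sqrt{m+O(\ell)}$ (each $\lesssim\sqrt N$ on the support) and of discrete differences of $\mu$ and of the preceding coefficients. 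The key bookkeeping step is an inductive claim. After $p$ steps each coefficient has the form $N^{-p/2}\beta_p(m/N)$, where $\beta_p$ is supported in a fixed neighbourhood $[u_0-O(p/N),u_1+O(p/N)]$ of $[u_0,u_1]$. Moreover, $\beta_p$ and its first $\ell-p$ derivatives are bounded by $C\max_{j\le p+(\ell-p)}\|\mu^{(j)}\|_\infty$. This follows from the mean value theorem, $N[\beta(t)-\beta(t+1/N)]=-\beta'(\xi)$, and from the smoothness of $t\mapsto\sqrt{t+O(1/N)}$ on $t\ge u_0/2$. The last point requires $u_0>0$, which is exactly where the hypothesis $u_0>0$ enters.

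With the claim in hand, each term is bounded by $N^{-\ell/2}\,C\max_{j\le\ell}\|\mu^{(j)}\|_\infty\sum_{m\lesssim N}|g_{m+i}(u)g_{m+i'}(v)|$. Cauchy--Schwarz and the bound $\sum_{m\le M}g_m^2\le CM^{1/2}$ again give $N^{1/2}$. The number of terms depends only on $\ell$, so the constant depends only on $u_0,u_1,L$.

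I expect the main obstacle to be this inductive coefficient bookkeeping, namely keeping the multiplier smooth in $m/N$ with $N$-independent derivative bounds after repeated differencing against the square-root weights. If it becomes messy, I will instead write $(u-v)^\ell K_N$ through the operator identity $[u,\,\cdot\,]$ acting on $\mu(\mathcal H/N)$, where $\mathcal H$ is the Hermite operator and $\mu(\mathcal H/N)$ is its spectral multiplier with kernel $K_N$. Expressed via the ladder operators, the same counting appears as commutators of smooth functions of $\mathcal H/N$.
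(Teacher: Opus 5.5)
Your proof is correct, but it takes a different route from the paper. The paper does not prove this estimate itself. It applies the localization theorem of \cite{PetrushevXu2008} (Theorem~1, Corollary~1, (2.17)) in one dimension at $u=x/\sqrt2$, $v=y/\sqrt2$, and reads off that the constant depends on the multiplier only through $\max_{0\le j\le L}\|\mu^{(j)}\|_{L^\infty}$ and on $u_0,u_1,L$. You instead reprove the localization from the three-term recurrence: each factor $\sqrt2(u-v)$ is absorbed by a Christoffel--Darboux-type summation by parts that gains $N^{-1/2}$, and Cauchy--Schwarz with the uniform bound $\sup_u\sum_{m\le M}g_m(u)^2\le CM^{1/2}$ closes the estimate. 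I checked the identity and the pairing, and both are right.

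What your route buys is that the linear, explicit dependence on $\max_{j\le L}\|\mu^{(j)}\|_{L^\infty}$ is derived rather than asserted. This is exactly what the paper needs, since it applies the lemma to the $N$- and $\rho$-dependent multipliers $\mu_{N,\rho}$ in the proof of \autoref{lem:normalized-block-localization}. The citation, in exchange, is shorter and is not tied to one dimension or to $u_0>0$.

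When you write out the induction, make two points explicit.
\begin{itemize}
\item In the off-diagonal sums $\sum_m c_m g_{m+i}(u)g_{m+i'}(v)$ with $i\ne i'$, the pairing produces the coefficients $c_m\sqrt{m+i+1}-c_{m+1}\sqrt{m+i'+1}$ and $c_m\sqrt{m+i}-c_{m-1}\sqrt{m+i'}$. Part of the $N^{-1/2}$ gain therefore comes from $\sqrt{m+i+1}-\sqrt{m+i'+1}=O(N^{-1/2})$, not from differencing $\beta_p$. This is where $u_0>0$ is genuinely used.
\item The uniform derivative bounds for $\sqrt{t+O(\ell/N)}$ on the support require $N\ge C\ell/u_0$. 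For the smaller $N$, only boundedly many indices $m<u_1N$ occur. The Gaussian decay $|g_m(u)g_m(v)|\le C_me^{-(u-v)^2/8}$ then gives the bound directly.
\end{itemize}
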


\begin{proof}
We apply the localization estimate in \cite[Theorem~1 and Corollary~1, (2.17)]{PetrushevXu2008} in one dimension at $u=x/\sqrt2$ and $v=y/\sqrt2$, using $\psi_m(x)=2^{-1/4}g_m(x/\sqrt2)$. Since $\operatorname{supp}\mu\subset(u_0,u_1)$, the constant depends only on $u_0,u_1,L$, with multiplier dependence bounded by $\max_{0\le j\le L}\|\mu^{(j)}\|_{L^\infty}$. Absorbing the fixed scaling factors gives \eqref{eq:external-localization}.
\end{proof}

For the diagonal lower bound in \autoref{lem:normalized-block-localization}, we also need the following estimate for the Hermite energy in a frequency band.

\begin{lemma}
\label{lem:Hermite-band-lower}
Fix $0<\alpha<\beta$ and $C_b>0$. There exist $c,C>0$, depending only on $\alpha,\beta,C_b$, such that, whenever $R\ge1$, $N\ge C(1+R)^2$, and $|b|\le C_b(1+R)$,
\begin{equation}
\label{eq:Hermite-band-lower}
  \sum_{\alpha N\le m\le \beta N}\psi_m(b)^2
  \ge cN^{1/2}.
\end{equation}
\end{lemma}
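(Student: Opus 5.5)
The plan is to reduce \eqref{eq:Hermite-band-lower} to the classical bulk asymptotics of Hermite functions, using the fact that $b$ lies deep inside the oscillatory region. Because $|b|\le C_b(1+R)$ and $N\ge C(1+R)^2$, we have $b^2\le (C_b^2/C)N$. Taking $C$ large, depending on $\alpha,C_b$ and a small $\varepsilon>0$ fixed below, gives $|b|\le\varepsilon\sqrt{\alpha N}\le\varepsilon\sqrt m$ for every $m$ in the band $\alpha N\le m\le\beta N$. For the normalization \eqref{eq:psi-definition}, the turning point of $\psi_m$ is at $|x|\approx2\sqrt{m}$. So $b/(2\sqrt m)\in[-\varepsilon/2,\varepsilon/2]$, uniformly in the band and in $R$.

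Step 1 is to invoke the Plancherel--Rotach asymptotics \cite[Theorem~8.22.9]{Szego1975}, rescaled via $\psi_m(x)=2^{-1/4}g_m(x/\sqrt2)$. For $x=2\sqrt{m+1/2}\cos\varphi$ with $\varphi\in[\varepsilon',\pi-\varepsilon']$, they give
\[
  \psi_m(x)=\Bigl(\tfrac{2}{\pi}\Bigr)^{1/2}(2m)^{-1/4}(\sin\varphi)^{-1/2}\Bigl[\sin\Bigl(\tfrac{2m+1}{4}\bigl(\sin2\varphi-2\varphi\bigr)+\tfrac{3\pi}{4}\Bigr)+O(m^{-1})\Bigr],
\]
uniformly in this range. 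Here $\varphi=\varphi_m(b)$ stays within $O(\varepsilon)$ of $\pi/2$ for all $m$ in the band, so $\psi_m(b)^2=c_0m^{-1/2}\bigl[\sin^2\Phi_m(b)+O(m^{-1})\bigr]$ with $c_0\asymp1$. If I prefer not to rely on the exact form of the phase, I would instead derive the needed behaviour from the three-term recurrence $b\,\psi_m=\sqrt{m+1}\,\psi_{m+1}+\sqrt m\,\psi_{m-1}$ (valid for $\psi_m=\rho_1^{1/2}H_m$). For $|b|\ll\sqrt m$ this is a perturbed rotation by an angle close to $\pi/2$.

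Step 2, the pairing argument, is where the main obstacle lies: the amplitude bound alone gives only an upper estimate, and one must exclude near-cancellation of $\sin\Phi_m(b)$ over many consecutive $m$. The increment of the phase in $m$ is $\Phi_{m+1}(b)-\Phi_m(b)=\tfrac12(\sin2\varphi-2\varphi)+O(\varepsilon/\sqrt m)$, which equals $-\pi/2+O(\varepsilon)$ near $\varphi=\pi/2$. The elementary inequality $\sin^2t+\sin^2(t+\theta)\ge\min\{\sin^2\theta,\cos^2\theta\}$-type lower bounds (precisely, $\sin^2t+\sin^2(t+\theta)\ge 1-|\cos\theta|$) then give $\psi_m(b)^2+\psi_{m+1}(b)^2\ge c\,m^{-1/2}$ for every consecutive pair in the band, provided $\varepsilon$ is small and $N$ is large. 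In the recurrence version, the same conclusion follows from the nearly conserved quadratic form $Q_m=\psi_m^2+\psi_{m+1}^2-\tfrac{b}{\sqrt{m+1}}\psi_m\psi_{m+1}$, which is comparable to $\psi_m^2+\psi_{m+1}^2$ when $|b|\le\varepsilon\sqrt m$. That form changes only by a factor $1+O(1/m)$ per step, so $Q_m\asymp Q_{\lfloor\alpha N\rfloor}$ across the band. Its size is then fixed by a single Plancherel--Rotach evaluation, or by the Christoffel--Darboux diagonal asymptotic $\sum_{m<n}\psi_m(x)^2\approx\pi^{-1}\sqrt{n-x^2/4}$.

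Step 3 sums the roughly $(\beta-\alpha)N/2$ disjoint consecutive pairs in $[\alpha N,\beta N]$, each contributing at least $cN^{-1/2}$, which yields $\sum_{\alpha N\le m\le\beta N}\psi_m(b)^2\ge cN^{1/2}$. The finitely many small $N$ not covered by the asymptotic regime are absorbed by enlarging $C$, since $N\ge C(1+R)^2\ge C$. All constants depend only on $\alpha,\beta,C_b$ through the choice of $\varepsilon$ and $C$.
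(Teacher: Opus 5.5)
Your proposal is correct, but it takes a different route from the paper.

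\textbf{The paper's route.} The paper never examines the oscillation of individual Hermite functions. It quotes a band-versus-kernel comparison from Petrushev--Xu (their Lemma~1),
\[
\sum_{m=\lfloor(1-\lambda)n\rfloor}^{n}g_m(x)^2\ge c\,\mathcal K^{(1)}_{\lfloor\rho n\rfloor}(x,x),\qquad |x|\le2\sqrt{2n+1},
\]
and then the diagonal bound $\mathcal K^{(1)}_M(x,x)\ge cM^{1/2}$ for $|x|\le\sqrt M$ from their (2.12). The hypothesis $N\ge C(1+R)^2$ is used only to place $b/\sqrt2$ in that range. The first lemma is needed because (2.12) is a two-sided bound only up to constants, so band sums cannot be obtained by subtracting kernels.

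\textbf{Your route.} You work from Plancherel--Rotach (or the three-term recurrence) and show that two consecutive terms cannot both be small. This yields the stronger pairwise statement $\psi_m(b)^2+\psi_{m+1}(b)^2\ge cm^{-1/2}$ for every consecutive pair in the band. It is more self-contained, since it uses only classical asymptotics, and it exposes the mechanism. The paper's version is shorter because it reuses the Petrushev--Xu machinery already needed for Lemma~\ref{lem:external-Hermite-localization}.

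\textbf{Two harmless slips.}
\begin{itemize}
\item Since $\psi_m(x)=2^{-1/4}g_m(x/\sqrt2)$, the amplitude is $\pi^{-1/2}m^{-1/4}(\sin\varphi)^{-1/2}$, not $2^{1/4}$ times that.
\item With $b$ fixed, $\frac{d}{dm}\Phi_m=-\varphi_m$. So the correction to $\tfrac12(\sin2\varphi-2\varphi)$ is $-\tfrac12\sin2\varphi=O(\varepsilon)$, not $O(\varepsilon/\sqrt m)$, and $\Phi_{m+1}-\Phi_m=-\varphi_m+O(\varepsilon/m)$. The increment is still $-\pi/2+O(\varepsilon)$, so your bound $\sin^2t+\sin^2(t+\theta)\ge1-|\cos\theta|$ goes through. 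In fact it works for any $\varphi$ bounded away from $0$ and $\pi$.
\end{itemize}

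\textbf{The recurrence variant.} This is also sound. A direct computation gives $Q_{m+1}=Q_m-\frac{1}{m+2}\bigl(\psi_m^2-\frac{b}{\sqrt{m+1}}\psi_m\psi_{m+1}\bigr)$. Hence $Q_{m+1}=Q_m(1+O(1/m))$ whenever $|b|\le\varepsilon\sqrt m$, exactly as you claim.
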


\begin{proof}
Denote the kernel of the Hermite partial sum in one dimension by
\[
  \mathcal K_M^{(1)}(u,v)
  :=
  \sum_{j=0}^{M}g_j(u)g_j(v),
\]
so that $\mathcal K_M^{(1)}(u,u) = \sum_{j=0}^{M}g_j(u)^2.$ The superscript $(1)$ indicates the spatial dimension; in the notation of \cite{PetrushevXu2008}, this is precisely $K_{M,1}$.

Since $\psi_m(b)=2^{-1/4}g_m(b/\sqrt2)$,
\[
  \sum_{\alpha N\le m\le \beta N}\psi_m(b)^2
  =
  2^{-1/2}
  \sum_{\alpha N\le m\le \beta N}g_m(b/\sqrt2)^2.
\]
Set $n:=\floor{\beta N}$ and $\lambda:=(1-\alpha/\beta)/2\in(0,1)$. For sufficiently large $C$, $[\floor{(1-\lambda)n},n]\subset[\alpha N,\beta N]$. In one dimension, the estimate in \cite[Lemma~1]{PetrushevXu2008} gives constants $\rho\in(0,1)$ and $c>0$ such that, whenever $|x|\le2\sqrt{2n+1}$,
\[
  \sum_{m=\floor{(1-\lambda)n}}^{n}g_m(x)^2
  \ge
  c\,\mathcal K_{\floor{\rho n}}^{(1)}(x,x).
\]
Set $M=\floor{\rho n}\asymp N$. Since $|b|\le C_b(1+R)$ and $N\ge C(1+R)^2$, choosing $C$ sufficiently large ensures $|b|/\sqrt2\le\sqrt M$. Combining the preceding bound with the estimate in \cite[(2.12)]{PetrushevXu2008} gives
\[
  \sum_{\alpha N\le m\le\beta N}g_m(b/\sqrt2)^2\ge c\mathcal K_M^{(1)}(b/\sqrt2,b/\sqrt2)\ge cM^{1/2}\ge cN^{1/2}.
\]
Combining the preceding estimates proves \eqref{eq:Hermite-band-lower}.
\end{proof}

\begin{proof}[Proof of \autoref{lem:normalized-block-localization}]
By \autoref{lem:eta-formula} and \eqref{eq:psi-definition}, for $n\ge k+1$,
\[
  \widetilde\eta_n(b)
  =
  k!\sqrt{\frac{(n-k-1)!}{n!}}\psi_{n-k-1}(b).
\]
Since $\operatorname{supp}\zeta\subset[1/2,2]$, choosing $N_1\ge4(k+1)$ ensures that $\zeta(n/N)=0$ for $0\le n\le k$ whenever $N\ge N_1$. Applying the coefficient formula to the remaining terms and setting $m=n-k-1\ge0$, we obtain
\[
  \widetilde m_N(\theta,\theta')
  =
  (k!)^2
  \sum_{m=0}^{\infty}
  \zeta((m+k+1)/N)
  \frac{m!}{(m+k+1)!}
  (a\cdot a')^{m+k+1}
  \psi_m(b)\psi_m(b').
\]

Set $\rho:=|a\cdot a'|$. If $\rho=0$, the kernel vanishes. Otherwise, let $b_*:=\operatorname{sgn}(a\cdot a')b'$. Since $H_m(-x)=(-1)^m H_m(x)$ and $\rho_1(-x)=\rho_1(x)$, $\psi_m(-x)=(-1)^m\psi_m(x).$ Hence, $(a\cdot a')^{m+k+1}\psi_m(b') = (a\cdot a')^{k+1} \rho^m\psi_m(b_*),$ and therefore
\[
  \widetilde m_N(\theta,\theta')
  =
  (k!)^2(a\cdot a')^{k+1}
  \sum_{m=0}^{\infty}
  \zeta((m+k+1)/N)
  \frac{m!}{(m+k+1)!}
  \rho^m
  \psi_m(b)\psi_m(b_*).
\]

Define
\[
  \mu_{N,\rho}(t)
  :=
  \zeta(t+(k+1)/N)
  \prod_{j=1}^{k+1}(t+j/N)^{-1}
  \rho^{Nt},
  \qquad t\ge0.
\]
At $t=m/N$, $\frac{m!}{(m+k+1)!} = N^{-k-1} \prod_{j=1}^{k+1}(m/N+j/N)^{-1},$ so
\[
  \zeta((m+k+1)/N)
  \frac{m!}{(m+k+1)!}
  \rho^m
  =
  N^{-k-1}\mu_{N,\rho}(m/N).
\]
Thus,
\[
  \widetilde m_N(\theta,\theta')
  =
  (k!)^2(a\cdot a')^{k+1} N^{-k-1}
  \sum_{m=0}^{\infty}
  \mu_{N,\rho}(m/N)
  \psi_m(b)\psi_m(b_*).
\]

We bound the derivatives of $\mu_{N,\rho}$. For $N\ge N_1$, its support lies in $[1/4,2]$, a fixed compact interval away from $0$. Let $\lambda:=-N\log\rho\ge0$, so that $\rho^{Nt}=e^{-\lambda t}$. On the support, the function
\[
  \zeta(t+(k+1)/N)\prod_{j=1}^{k+1}(t+j/N)^{-1}
\]
and all of its derivatives up to any fixed order are bounded uniformly in $N$. For $0\le j\le L$ and every fixed nonnegative integer $M$, we have $\sup_{\lambda\ge0}(1+\lambda)^M\lambda^j e^{-c\lambda}<\infty$. Leibniz's rule therefore gives
\[
  \max_{0\le j\le L}
  \norm{\partial_t^j\mu_{N,\rho}}_{L^\infty}
  \le
  C_{L,M,k,\zeta}(1+\lambda)^{-M}.
\]

Let $\varphi:=\arccos\rho\in[0,\pi/2]$. Since $-\log\rho\ge1-\rho$ and $1-\cos\varphi\ge c\varphi^2$, $\lambda=N(-\log\rho)\ge cN\varphi^2.$ Taking $M=\lceil L/2\rceil$ gives $(1+\lambda)^{-M} \le C_L(1+\sqrt N\varphi)^{-L}.$ Consequently,
\[
  \max_{0\le j\le L}
  \norm{\partial_t^j\mu_{N,\rho}}_{L^\infty}
  \le
  C_L(1+\sqrt N\varphi)^{-L}.
\]

Applying \autoref{lem:external-Hermite-localization} with $x=b$ and $y=b_*$ yields
\[
\begin{aligned}
  |\widetilde m_N(\theta,\theta')|
  &\le
  C N^{-k-1}N^{1/2}
  (1+\sqrt N\varphi)^{-L}
  (1+\sqrt N|b-b_*|)^{-L}\\
  &=
  C N^{-k-1/2}
  (1+\sqrt N\varphi)^{-L}
  (1+\sqrt N|b-b_*|)^{-L}.
\end{aligned}
\]
If $a\cdot a'>0$, then $b_*=b'$ and $\varphi=\rho_{\Sph^{d-1}}(a,a')$, so $\varphi+|b-b_*|=d_0(\theta,\theta')$. If $a\cdot a'<0$, then $b_*=-b'$ and $\varphi=\rho_{\Sph^{d-1}}(a,-a')$, so $\varphi+|b-b_*|=d_\pi(\theta,\theta')$. Using $(1+u)^{-L}(1+v)^{-L} \le (1+u+v)^{-L},$ we obtain \eqref{eq:normalized-block-localization}.

It remains to prove the diagonal lower bound. For $\theta=\theta'$,
\[
  \widetilde m_N(\theta,\theta)
  =
  (k!)^2
  \sum_{m=0}^{\infty}
  \zeta((m+k+1)/N)
  \frac{m!}{(m+k+1)!}
  \psi_m(b)^2.
\]
Since $\zeta(1)=1$, we have $\zeta\ge c>0$ on a fixed interval $[\alpha_0,\beta_0]\subset(1/2,2)$. Whenever $\alpha_0N\le m+k+1\le\beta_0N$, $\zeta((m+k+1)/N)\ge c.$ Moreover, $m\asymp N$ on this interval, so $\frac{m!}{(m+k+1)!} = \prod_{j=1}^{k+1}(m+j)^{-1} \ge cN^{-k-1}.$ Hence,
\[
  \widetilde m_N(\theta,\theta)
  \ge
  cN^{-k-1}
  \sum_{\alpha_0N\le m+k+1\le \beta_0N}
  \psi_m(b)^2.
\]
Choose $\alpha_0<\alpha_1<\beta_1<\beta_0.$ Since $k$ is fixed, for $N\ge(k+1)/(\beta_0-\beta_1)$, $[\alpha_1N,\beta_1N] \subset [\alpha_0N-k-1,\beta_0N-k-1].$ Therefore,
\[
  \sum_{\alpha_0N\le m+k+1\le \beta_0N}\psi_m(b)^2
  \ge
  \sum_{\alpha_1N\le m\le \beta_1N}\psi_m(b)^2.
\]
If $N\ge C(1+R)^2$ and $\theta\in\Theta_R$, then $|b|\le C_b(1+R)$. After enlarging the fixed initial frequency if necessary, \autoref{lem:Hermite-band-lower} applied to $[\alpha_1,\beta_1]$ gives
\[
  \sum_{\alpha_0N\le m+k+1\le \beta_0N}\psi_m(b)^2
  \ge
  cN^{1/2}.
\]
Thus,
\[
  \widetilde m_N(\theta,\theta)
  \ge
  cN^{-k-1}N^{1/2}
  =
  cN^{-k-1/2}.
\]

Finally, let $\theta=(a,b)\in\Theta_R$ and $0\le s<k+1/2$. The $L^2(\gamma)$ Hermite expansion derived in \autoref{subsec:inverse-parameters} gives
\[
  \langle\widetilde\phi_\theta,H_\alpha\rangle_{L^2(\gamma)}
  =\widetilde\eta_n(b)\sqrt{\frac{n!}{\alpha!}}\,a^\alpha,
  \qquad |\alpha|=n.
\]
Since $|a|=1$, the multinomial formula yields
\[
  \sum_{|\alpha|=n}|\langle\widetilde\phi_\theta,H_\alpha\rangle_{L^2(\gamma)}|^2
  =|\widetilde\eta_n(b)|^2\sum_{|\alpha|=n}\frac{n!}{\alpha!}a^{2\alpha}
  =|\widetilde\eta_n(b)|^2.
\]
Grouping the terms in \eqref{eq:Hermite-Sobolev-definition} by $|\alpha|=n$, we therefore obtain
\[
  \|\widetilde\phi_\theta\|_{\mathcal H_\gamma^s}^2
  =\sum_{n\ge0}(1+n)^s|\widetilde\eta_n(b)|^2,
\]
where the nonnegative series may initially be infinite.

Insert the dyadic partition \eqref{eq:dyadic-partition} into this series and interchange the nonnegative sums. Since $\zeta_0(0)=1$ and $\zeta_0(n)=0$ for integers $n\ge1$, the window at low frequencies contributes $|\widetilde\eta_0(b)|^2$. Together with the finitely many blocks with $N<N_1$, this contribution is bounded by $C_{R,\theta,s}$. On each remaining block, $\zeta(n/N)\ne0$ implies $N/2\le n\le2N$, so $(1+n)^s\le3^sN^s$. The identity $\widetilde m_N(\theta,\theta)=\sum_{n\ge0}\zeta(n/N)|\widetilde\eta_n(b)|^2$ and \eqref{eq:normalized-block-localization} with $\theta'=\theta$ consequently give
\[
  \sum_{n\ge0}(1+n)^s\zeta(n/N)|\widetilde\eta_n(b)|^2
  \le 3^sN^s\widetilde m_N(\theta,\theta)
  \le CN^{s-k-1/2}.
\]
Summing over the dyadic scales yields
\[
  \|\widetilde\phi_\theta\|_{\mathcal H_\gamma^s}^2
  \le C_{R,\theta,s}+C\sum_{\substack{N=2^j\\N\ge N_1}}N^{s-k-1/2}<\infty,
\]
because $\sum_{j\ge0}2^{j(s-k-1/2)}$ is a convergent geometric series when $s<k+1/2$. Finally, $\phi_\theta=\rho_1(b)^{1/2}\widetilde\phi_\theta$ with $\rho_1(b)>0$, so $\phi_\theta\in\mathcal H_\gamma^s$.
\end{proof}

To pass from the kernel bounds to the matrix estimate in \autoref{lem:center-dominance}, we use parameter separation to bound the absolute row sums.

\begin{lemma}
\label{lem:packing}
Let $0<\rho\le\delta_\pm(\Xi)$, $L>d$, and $Q>0$. Then
\[
  \sup_i\sum_{j\ne i}(1+Qd_\pm(\theta_i,\theta_j))^{-L}\le C_{d,C_b,L}(Q\rho)^{-L}.
\]
\end{lemma}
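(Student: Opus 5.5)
The plan is a standard packing argument in the parameter space $\Theta_R$, with $d_\pm$ handled through its two components. Since $(1+Qd_\pm)^{-L}\le(1+Qd_0)^{-L}+(1+Qd_\pi)^{-L}$, I will bound the two sums
\[
  \sum_{j\ne i}(1+Qd_0(\theta_i,\theta_j))^{-L}
  \qquad\text{and}\qquad
  \sum_{j\ne i}(1+Qd_\pi(\theta_i,\theta_j))^{-L}
\]
separately. The second reduces to the first. The antipodal map $\iota(a,b)=(-a,-b)$ preserves $\Theta_R$, because the bias window is symmetric, and it satisfies $d_\pi(\theta_i,\theta_j)=d_0(\theta_i,\iota\theta_j)$. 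Moreover, $d_0(\iota\theta_j,\iota\theta_l)=d_0(\theta_j,\theta_l)\ge\rho$ for $j\ne l$. So it suffices to prove the following claim: for any point $\theta_*\in\Theta_R$ and any set $\{\xi_j\}\subset\Theta_R$ with pairwise $d_0$-distance at least $\rho$ and with $d_0(\theta_*,\xi_j)\ge\rho$ for all $j$,
\[
  \sum_j(1+Qd_0(\theta_*,\xi_j))^{-L}\le C(Q\rho)^{-L}.
\]
For the $d_0$ sum, $j\ne i$ gives $d_0(\theta_i,\theta_j)\ge\rho$ directly. For the $d_\pi$ sum, we have $d_0(\theta_i,\iota\theta_j)=d_\pi(\theta_i,\theta_j)\ge d_\pm(\theta_i,\theta_j)\ge\rho$.

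To prove the claim, I decompose into shells $A_m:=\{j: m\rho\le d_0(\theta_*,\xi_j)<(m+1)\rho\}$ for $m\ge1$. The balls of $d_0$-radius $\rho/2$ around the $\xi_j$ are pairwise disjoint. The metric $d_0$ is comparable to the product of geodesic distance and $|b-b'|$ on $\mathbb S^{d-1}\times\mathbb R$, so the volume estimate gives
\[
  \#A_m\le C\,\frac{((m+2)\rho)^{d}}{\rho^{d}}\le C m^{d-1}.
\]
This needs care in two places. First, for small radii the $d_0$-ball in $\Theta_R$ has volume $\gtrsim r^d$ even near the boundary $|b|=C_b(1+R)$, because a half-interval in $b$ remains. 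Second, for radii exceeding $\pi$, the sphere factor saturates. There the ball has volume about $r$ while its intersection has volume $\lesssim r$, and the small balls still have volume $\gtrsim\min(\rho,1)^d$.

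The main obstacle is keeping the constant independent of $R$ and $\rho$ when $\rho$ is not small, since the bias range is unbounded. I would handle it by counting in the shell with the slab volume
\[
  |\{(a,b):|b-b_*|<(m+1)\rho\}|\lesssim (m+1)\rho\cdot\min(1,((m+1)\rho)^{d-1}),
\]
and dividing by $\min(\rho,1)^d$. Where $\rho\ge1$ this gives $\#A_m\lesssim m\rho\cdot\rho^{0}\lesssim m\rho$. That bound still needs comparison with $(Q\rho m)^{-L}$, and $L>d\ge1$ makes the sum converge.

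Finally, I sum the shells:
\[
  \sum_{m\ge1}Cm^{d-1}(1+Qm\rho)^{-L}\le C(Q\rho)^{-L}\sum_m m^{d-1-L}\le C(Q\rho)^{-L},
\]
using $L>d$. If $Q\rho<1$, the right side $(Q\rho)^{-L}$ is at least $1$, and a slightly cruder argument covers this case. I expect the uniform volume counting near large $\rho$ and at the bias boundary to be the only delicate bookkeeping. If it fails for $\rho\gg1$, I would restrict to $\rho\le\pi$, which is the only case used, via $\bar\delta_\pm$.
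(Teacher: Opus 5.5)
Your handling of the antipodal part is sound. Splitting $(1+Qd_\pm)^{-L}\le(1+Qd_0)^{-L}+(1+Qd_\pi)^{-L}$ and using $d_\pi(\theta_i,\theta_j)=d_0(\theta_i,\iota\theta_j)$ is equivalent, up to a factor $2$, to the paper's device of replacing $\theta_j$ by $\varepsilon_j\theta_j$ with $\varepsilon_j\in\{\pm1\}$.

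\textbf{Gap 1: the shell count.} The disjoint-balls argument you describe compares the balls around points of $A_m$ with the whole ball of radius $(m+2)\rho$. That yields only $\#A_m\le C(m+2)^d$, and the next step $(m+2)^d\le Cm^{d-1}$ is false. With shells of linear width $\rho$ and $\#A_m\lesssim m^d$, the final series is $(Q\rho)^{-L}\sum_m m^{d-L}$. This diverges for $d<L\le d+1$, so your argument does not work under the stated hypothesis $L>d$. To get $m^{d-1}$ you would need a volume bound for $d_0$-annuli of width comparable to $\rho$, of order $\rho\min\{m\rho,1\}^{d-1}$, holding uniformly near the bias boundary and beyond the saturation radius $\pi$. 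You have not supplied this.

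The paper avoids the issue with dyadic shells $2^\ell\rho\le d_\pm(\theta_i,\theta_j)<2^{\ell+1}\rho$. There the cumulative count $\#\{j:d_\pm(\theta_i,\theta_j)<r\}\le C(1+r/\rho)^d$ suffices, because $\sum_{\ell\ge0}2^{\ell d}(Q2^\ell\rho)^{-L}\le C(Q\rho)^{-L}$ exactly when $L>d$. The bound $(1+x)^{-L}\le x^{-L}$ also covers $Q\rho<1$, so no separate case is needed.

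\textbf{Gap 2: uniformity in $\rho$.} A $d_0$-ball of radius $t$ in $\Theta_R$ has volume comparable to $V_R(t)=\min\{t,1\}^{d-1}\min\{t,1+R\}$, not $\min\{t,1\}^d$. Dividing by $\min\{\rho,1\}^d$ is what costs you the factor $\rho$ in $\#A_m\lesssim m\rho$. That factor then gives $\rho(Q\rho)^{-L}$, and for $d=1$ with $L\le 2$ the series $\sum_m m^{1-L}$ diverges.

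With the correct volume, $V_R(T)/V_R(t)\le (T/t)^d$ for $T\ge t$, since each of the two factors has ratio at most $T/t$. Take $t$ to be the radius of the disjoint small balls and $T$ the radius of the ball containing them. This gives the cumulative count uniformly in $R\ge1$ and in every $\rho>0$.

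Your fallback of restricting to $\rho\le\pi$ would prove a weaker lemma. It suffices for the application in \autoref{lem:center-dominance}, where $\rho=\bar\delta_\pm\le 1$, but not for the statement as given.
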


\begin{proof}
We first prove that, for every $\theta_i\in\Xi$ and $r>0$,
\begin{equation}
\label{eq:packing-number}
  \#\{j:d_\pm(\theta_i,\theta_j)<r\}
  \le C\left(1+\frac{r}{\rho}\right)^d.
\end{equation}
Here $C$ depends only on $d,C_b$.

With respect to the product measure $d\sigma(a)\,db$ on $\Theta_R$, where $d\sigma$ is surface measure on $\mathbb S^{d-1}$, every ball of radius $t>0$ in the $d_0$ metric has volume comparable to
\[
  V_R(t):=\min\{t,1\}^{d-1}\min\{t,1+R\},
\]
uniformly in its center and $R\ge1$. Indeed, such a ball contains the product of a spherical ball and a truncated bias interval of radii $t/2$ and is contained in the corresponding product of radii $t$. Spherical ball measures are comparable to $\min\{t,1\}^{d-1}$, including counting measure on $\mathbb S^0$ when $d=1$. The truncated interval lengths are comparable to $\min\{t,1+R\}$, including at the endpoints of the bias interval.

Fix $\theta_i\in\Xi$. For each $j$ with $d_\pm(\theta_i,\theta_j)<r$, choose $\varepsilon_j\in\{-1,1\}$ so that $d_0(\theta_i,\varepsilon_j\theta_j)<r$, where $-(a,b):=(-a,-b)$. For distinct selected indices,
\[
  d_0(\varepsilon_j\theta_j,\varepsilon_\ell\theta_\ell)\ge d_\pm(\theta_j,\theta_\ell)\ge\rho.
\]
In the $d_0$ metric, the balls of radius $\rho/3$ about these representatives are pairwise disjoint and lie in the ball centered at $\theta_i$ with radius $r+\rho/3$. Comparing volumes gives
\[
  \#\{j:d_\pm(\theta_i,\theta_j)<r\}
  \le C\frac{V_R(r+\rho/3)}{V_R(\rho/3)}
  \le C\left(1+\frac{3r}{\rho}\right)^d
  \le C\left(1+\frac r\rho\right)^d,
\]
which proves \eqref{eq:packing-number}.

For the weighted sum, decompose the indices into annuli
\[
  2^\ell \rho\le d_\pm(\theta_i,\theta_j)<2^{\ell+1}\rho,
  \qquad \ell=0,1,2,\ldots .
\]
By \eqref{eq:packing-number}, the $\ell$th annulus contains at most $C2^{\ell d}$ points. Hence,
\[
  \sum_{j\ne i}
  (1+Qd_\pm(\theta_i,\theta_j))^{-L}
  \le
  C\sum_{\ell\ge0}
  2^{\ell d}(Q2^\ell \rho)^{-L}
  \le
  C(Q\rho)^{-L},
\]
where the last inequality uses $L>d$.
\end{proof}

\section{Auxiliary Results for Convergence Analysis on \texorpdfstring{$\mathbb R^d$}{Rd}}
\label{app:convergence-lemmas}

This appendix collects the approximation, parameter, sampling, operator, and tail results used in the convergence proof on $\mathbb R^d$.

\subsection{Approximation construction}

In this subsection, we establish Sobolev approximation on growing balls with explicit coefficient bounds and construct parameter sets with the required coverage and separation.

Let $d\nu=dS/|\mathbb S^d|$ be normalized surface measure on $\mathbb S^d$ and set $\sigma_k(t):=t_+^k$. Let $\Pi_m$ be the orthogonal projection in $L^2(\mathbb S^d,\nu)$ onto the spherical harmonics of degree $m$. We equip $H^s(\mathbb S^d)$ with the norm
\[
  \|F\|_{H^s(\mathbb S^d)}^2:=\sum_{m=0}^{\infty}(1+m)^{2s}\|\Pi_m F\|_{L^2(\nu)}^2,\qquad s\ge0.
\]
The multiplier $\widehat{\sigma_k}(m)$ is defined by
\[
  \int_{\mathbb S^d}\sigma_k(\theta\cdot\eta)Y(\eta)\,d\nu(\eta)
  =\widehat{\sigma_k}(m)Y(\theta),
  \qquad \theta\in\mathbb S^d,
\]
for every spherical harmonic $Y$ of degree $m$.

For $C_b>1$, define the spherical strip
\begin{equation}
\label{eq:spherical-strip-definition}
  \Gamma_{C_b}:=\left\{\frac{(a,t)}{\sqrt{1+t^2}}:a\in\mathbb S^{d-1},\ |t|\le C_b\right\}\subset\mathbb S^d.
\end{equation}
All spherical fill and separation distances below are measured in the geodesic metric.

\begin{lemma}
\label{lem:fixed-parameter-critical-sobolev}
Fix $C_b>1$ and $1/2\le\varrho<1$. Suppose $\Theta_h= \{\vartheta_j:1\le j\le J\} \subset\Gamma_{C_b}$ is quasiuniform on $\Gamma_{C_b}$ at spherical scale $h$; in particular, $J\asymp h^{-d}$. Then there exists a linear approximation operator, independent of the target order $q$,
\[
  \mathcal Q_h:H^{s_k}(B_\varrho)
  \longrightarrow
  \operatorname{span}\{\sigma_k(\vartheta_j\cdot(x,1))\}_{j=1}^{J}
  +\mathcal P_k(\mathbb R^d),
\]
such that, for every $f\in H^{s_k}(B_\varrho)$, writing $\mathcal Q_hf=f_h+p_h$, with $p_h\in\mathcal P_k(\mathbb R^d)$, one has
\begin{equation}
\label{eq:fixed-parameter-critical-approximation}
  \|f-\mathcal Q_hf\|_{H^q(B_\varrho)}
  \le C_q h^{s_k-q}\|f\|_{H^{s_k}(B_\varrho)},
  \qquad 0\le q<k+\frac12,
\end{equation}
for every fixed real $q$ in this range. If $c=(c_j)_{j=1}^{J}$ denotes the coefficient vector of $f_h$, then
\begin{equation}
\label{eq:fixed-parameter-critical-coefficient}
  \sqrt J\|c\|_2
  +\|p_h\|_{\mathcal P_k}
  \le C\|f\|_{H^{s_k}(B_\varrho)}.
\end{equation}
Here $\|p\|_{\mathcal P_k}:=\sum_{|\alpha|\le k}|p_\alpha|$ for $p(x)=\sum_{|\alpha|\le k}p_\alpha x^\alpha$. The constants are uniform in $1/2\le\varrho<1$, $h$, and $f$.
\end{lemma}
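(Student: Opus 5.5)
The plan is to lift $f$ to the sphere, use a quasi-interpolant built from the spherical integral representation of $\mathrm{ReLU}^k$ ridge functions, and then discretize that representation at the nodes of $\Theta_h$.

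\emph{Extension and lifting.} Fix a bounded extension operator $E:H^{s_k}(B_\varrho)\to H^{s_k}(\mathbb R^d)$ whose output is supported in $B_2$. Because $\varrho\in[1/2,1)$ ranges over a compact set of radii, the operator norm can be chosen uniform in $\varrho$, for instance by rescaling a single extension from $B_1$. Next, lift $Ef$ to a function $F$ on $\mathbb S^d$ through the gnomonic map $x\mapsto (x,1)/\sqrt{1+|x|^2}$. Use the homogeneity of degree $k$: for $\eta=(x,1)/\sqrt{1+|x|^2}$ and $\vartheta\in\mathbb S^d$ we have $\sigma_k(\vartheta\cdot(x,1))=(1+|x|^2)^{k/2}\sigma_k(\vartheta\cdot\eta)$. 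Accordingly, set $F(\eta)=(1+|x|^2)^{-k/2}Ef(x)$ on the upper hemisphere and $F=0$ elsewhere. Since $Ef$ has compact support, $F$ is supported strictly inside the upper hemisphere. It therefore lies in $H^{s_k}(\mathbb S^d)$, with $\|F\|_{H^{s_k}(\mathbb S^d)}\le C\|f\|_{H^{s_k}(B_\varrho)}$, because the chart is a smooth diffeomorphism on a fixed compact set.

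\emph{Integral representation.} The multiplier $\widehat{\sigma_k}(m)$ is of size $m^{-(d/2+k+1/2)}=m^{-s_k}$ for $m$ in one parity class, and it vanishes for the other parity when $m>k$. Write $F=F_e+F_o$ according to parity. Using the antipodal identity $\sigma_k(t)-(-1)^{k+1}\sigma_k(-t)=t^k$, together with the parity of spherical harmonics, the component in the "bad" parity class of degree larger than $k$ can be transferred to the good class: a function $G(\vartheta)$ and its reflection $G(-\vartheta)$ produce the same ridge integral up to a polynomial of degree $\le k$. The result is $F=\int_{\mathbb S^d}\sigma_k(\vartheta\cdot\eta)\,G(\vartheta)\,d\nu(\vartheta)+P$, where $P$ is a polynomial of degree $\le k$ and $\|G\|_{L^2}\lesssim\|F\|_{H^{s_k}}$. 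Next I will localize the support of $G$ to the strip $\Gamma_{C_b}$. For $\vartheta$ outside the strip, the hyperplane $\vartheta\cdot(x,1)=0$ does not meet $B_\varrho$, because $C_b>1>\varrho$. Hence $\sigma_k(\vartheta\cdot(x,1))$ is either $0$ or a polynomial of degree $k$ on $B_\varrho$. These contributions are absorbed into $p_h$, and this is exactly the "fixed sign" remark used later in the paper.

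\emph{Discretization.} Partition $\Gamma_{C_b}$ into cells $V_j\ni\vartheta_j$ of diameter $\asymp h$ and volume $\asymp h^d$. Replace $G$ by a smoothed version $G_h$, obtained from a spherical-harmonic filter at degree $\asymp h^{-1}$, and put $c_j=\int_{V_j}G_h\,d\nu$. Cauchy--Schwarz then gives $\sqrt J\|c\|_2\lesssim\|G\|_{L^2}$, which is the coefficient bound \eqref{eq:fixed-parameter-critical-coefficient}. Everything constructed so far is linear in $f$.

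\emph{Main obstacle.} The hard step is the error estimate \eqref{eq:fixed-parameter-critical-approximation} in $H^q$ for all $q<k+1/2$ at the sharp rate $h^{s_k-q}$. Two errors must be controlled. The filtering error is $\int\sigma_k(\vartheta\cdot\eta)(G-G_h)$; since the multiplier decays like $m^{-s_k}$, this error is $\lesssim h^{s_k-q}\|F\|_{H^{s_k}}$ in $H^q(\mathbb S^d)$. The quadrature error is the difference between $\sum_j\int_{V_j}\bigl[\sigma_k(\vartheta\cdot\eta)-\sigma_k(\vartheta_j\cdot\eta)\bigr]G_h(\vartheta)\,d\nu$ and the corresponding exact integral. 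To handle it, I would use the facts that the map $\vartheta\mapsto\sigma_k(\vartheta\cdot\cdot)$ into $H^q$ is Hölder of order $k+1/2-q$, and that $G_h$ is smooth at scale $h$. A naive bound loses $h^{d/2}$, so I would instead use a first-order (or higher) moment matching within each cell, or the stability bounds for quasi-uniform nodes from the spherical literature cited in the paper (Liu--Mao--Xu), to recover $h^{s_k-q}$. Finally, map back from $\mathbb S^d$ to $B_\varrho$ through the chart; this gives the $H^q(B_\varrho)$ bound with constants uniform in $\varrho$.
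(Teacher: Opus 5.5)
There is a genuine gap. It lies in the discretization step, which you correctly flag as the main obstacle, and with your coefficients $c_j=\int_{V_j}G_h\,d\nu$ it cannot be closed.

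\textbf{Why the cell-average scheme fails.} These coefficients freeze the ridge direction at $\vartheta_j$ on each cell. The error $\sum_j\int_{V_j}[\sigma_k(\vartheta\cdot\eta)-\sigma_k(\vartheta_j\cdot\eta)]G_h(\vartheta)\,d\nu(\vartheta)$ is therefore that of a one-node-per-cell quadrature rule of fixed low order. Take $f$ smooth, so that $G_h$ is smooth uniformly in $h$. Then this error is in general of order $h$, coming from the first cell moments. Even for cells centered at the nodes, a generically nonzero $h^2$ term remains from the positive second moments $\int_{V_j}|\vartheta-\vartheta_j|^2\,d\nu$. The defect already affects low-degree harmonics, so $\|f-\mathcal Q_hf\|_{L^2(B_\varrho)}$ cannot reach $h^{s_k}$. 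That rate is what \eqref{eq:fixed-parameter-critical-approximation} requires at $q=0$, and $s_k=(d+2k+1)/2>2$ unless $d=k=1$.

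Your premise that $\vartheta\mapsto\sigma_k(\vartheta\cdot\,\cdot)\in H^q(\mathbb S^d)$ is H\"older of order $k+1/2-q$ also fails once $k+1/2-q>1$. First differences are only Lipschitz; the extra smoothness shows up only in higher-order differences, which one prescribed node per cell cannot exploit. Local moment matching beyond this order is impossible when the nodes are fixed. Stability bounds for quasiuniform nodes control coefficients and high-frequency tails, not a low-frequency error built into the choice of coefficients.

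\textbf{What is missing: exact reproduction of all degrees up to $N\asymp h^{-1}$.} The paper proceeds as follows.
\begin{enumerate}
\item It completes $\Theta_h$ to a quasiuniform set $\widetilde\Theta_h$ on all of $\mathbb S^d$ by adding nodes outside the open strip. By your fixed-sign observation, their neurons are $0$ or polynomials of degree at most $k$ on $B_\varrho$, and they produce $p_h$.
\item It takes positive cubature weights $\tau_j\lesssim h^d$ exact for spherical polynomials of degree $2N$.
\item It sets $\lambda_j=\tau_jG_N(\widetilde\vartheta_j)$ with $G_N=\sum_{m\le N,\ \widehat{\sigma_k}(m)\ne0}\widehat{\sigma_k}(m)^{-1}\Pi_mF$.
\end{enumerate}
Exactness for $G_NY$ and the Funk--Hecke formula give $\Pi_\ell Q_hF=\Pi_\ell F$ for all $\ell\le N$; parity handles the degrees where the multiplier vanishes. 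Exactness for $|G_N|^2$ gives $\|\lambda\|_2^2\le Ch^d\|F\|_{H^{s_k}}^2$.

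Only the tails $m>N$ then remain. The tail of $F$ is at most $N^{-(s_k-q)}\|F\|_{H^{s_k}}$. For the network, a summation-by-parts estimate gives $\sum_{m>N}(1+m)^{2q}\|\Pi_mQ_hF\|_{L^2}^2\le C_qN^{-(2k+1-2q)}\|\lambda\|_2^2$. This is where $q<k+1/2$ enters, and combined with the coefficient bound it yields $h^{s_k-q}$.

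\textbf{A smaller point: the lift.} Your global identity $F=\int\sigma_k(\vartheta\cdot\eta)G(\vartheta)\,d\nu+P$ fails for the zero-extended lift. Its components of degree $m>k$ with $m-k$ even lie in the kernel of the Funk--Hecke operator, and relating $G$ to $G(-\cdot)$ does not move them. Instead, replace $F$ by $F(\eta)+(-1)^{k+1}F(-\eta)$. This agrees with $F$ on the upper hemisphere, because your $F$ vanishes on the lower one, and it contains only degrees with nonzero multipliers. This is the parity lift the paper uses.
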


\begin{proof}
We first represent $f$ by a function on $\mathbb S^d$. The construction in \cite[Section~5.1]{LiuMaoXu2026DeterministicLS} defines a bounded linear map $f\mapsto F\in H^{s_k}(\mathbb S^d)$ with the required symmetry $F(-\eta)=(-1)^{k+1}F(\eta)$ and
\begin{equation}
\label{eq:spherical-lift-bound}
  f(x)=|(x,1)|^kF((x,1)/|(x,1)|),\quad x\in B_\varrho,\qquad \|F\|_{H^{s_k}(\mathbb S^d)}\le C\|f\|_{H^{s_k}(B_\varrho)}.
\end{equation}
The constant $C$ can be chosen independently of $1/2\le\varrho<1$: extension from these balls has a uniform bound by scaling, and the subsequent construction on the sphere is fixed.

Extend $\Theta_h$ to a maximal separated set $\widetilde\Theta_h=\{\widetilde\vartheta_j\}_{j=1}^{\widetilde J}$ by adding points only from $\mathbb S^d\setminus\operatorname{int}(\Gamma_{C_b})$, with separation at least a fixed positive multiple of $h$ chosen below the original separation. Maximality implies that every point outside the strip lies within $O(h)$ of a selected node; inside the strip, the original nodes already provide this covering. Thus the completed set is quasiuniform on $\mathbb S^d$, and spherical packing and covering estimates yield $\widetilde J\asymp J\asymp h^{-d}$. Number the nodes so that $\widetilde\vartheta_j=\vartheta_j$ for $1\le j\le J$.

It suffices to consider sufficiently small $h$: when $h$ is bounded below, the zero operator satisfies both conclusions by the uniform embedding $H^{s_k}(B_\varrho)\hookrightarrow H^q(B_\varrho)$. Fix a sufficiently small constant $\varepsilon>0$ and take $N=\lfloor\varepsilon h^{-1}\rfloor\ge k+1$. The spherical cubature construction in \cite{MhaskarNarcowichWard2001,MhaskarNarcowichWard2002Corrigendum} provides nonnegative weights $\tau_j$ such that
\[
  \sum_{j=1}^{\widetilde J}\tau_j P(\widetilde\vartheta_j)=\int_{\mathbb S^d}P(\eta)\,d\nu(\eta)
\]
for every spherical polynomial $P$ of degree at most $2N$. Positivity and exactness bound each weight by the degree-$N$ spherical Christoffel function, yielding $\tau_j\lesssim N^{-d}\lesssim h^d$. Following \cite[Lemma~3.2 and Theorem~4.1]{LiuMaoXu2025Integral}, set
\[
  G_N:=\sum_{\substack{0\le m\le N\\\widehat{\sigma_k}(m)\ne0}}\widehat{\sigma_k}(m)^{-1}\Pi_m F
\]
and define
\begin{equation}
\label{eq:fixed-operator-coefficient-formula}
  \lambda_j(F):=\tau_jG_N(\widetilde\vartheta_j),\qquad Q_hF(\eta):=\sum_{j=1}^{\widetilde J}\lambda_j(F)\sigma_k(\widetilde\vartheta_j\cdot\eta).
\end{equation}
The construction \eqref{eq:fixed-operator-coefficient-formula} is linear in $F$ and independent of $q$.

We first bound the coefficients. The formula in \cite[equation~(3.10)]{LiuMaoXu2025Integral} shows that $\widehat{\sigma_k}(m)=0$ precisely when $m>k$ and $m-k$ is even. For $m\ge k+1$ with $m-k$ odd, applying the Gamma duplication formula to the coefficient expression in \cite[equation~(3.10) and the proof of Lemma~3.1]{LiuMaoXu2025Integral} gives
\[
  |\widehat{\sigma_k}(m)|\asymp\frac{\Gamma((m-k)/2)}{\Gamma((m+d+k+1)/2)}\asymp(1+m)^{-s_k}.
\]
The last comparison follows from the Gamma ratio asymptotics, since the two arguments differ by $s_k=(d+2k+1)/2$. Absorbing the finitely many nonzero coefficients with $m\le k$ into the constants gives $|\widehat{\sigma_k}(m)|^{-2}\le C(1+m)^{2s_k}$ whenever $\widehat{\sigma_k}(m)\ne0$. Orthogonality and the definition of the Sobolev norm therefore imply
\[
  \|G_N\|_{L^2(\nu)}^2=\sum_{\substack{0\le m\le N\\\widehat{\sigma_k}(m)\ne0}}|\widehat{\sigma_k}(m)|^{-2}\|\Pi_m F\|_{L^2(\nu)}^2\le C\|F\|_{H^{s_k}(\mathbb S^d)}^2.
\]
Since $0\le\tau_j\le Ch^d$ and cubature is exact for $|G_N|^2$, we obtain
\[
  \|\lambda(F)\|_2^2\le Ch^d\sum_{j=1}^{\widetilde J}\tau_j|G_N(\widetilde\vartheta_j)|^2=Ch^d\|G_N\|_{L^2(\nu)}^2\le Ch^d\|F\|_{H^{s_k}(\mathbb S^d)}^2.
\]
Together with $\widetilde J\asymp h^{-d}$, this proves
\begin{equation}
\label{eq:spherical-critical-coefficient-bound}
  \sqrt{\widetilde J}\|\lambda(F)\|_2\le C\|F\|_{H^{s_k}(\mathbb S^d)}.
\end{equation}

To estimate $F-Q_hF$, fix $0\le\ell\le N$ and let $Y$ be any spherical harmonic of degree $\ell$. The Funk--Hecke formula \cite[equations~(3.6)--(3.8)]{LiuMaoXu2025Integral}, in the normalization above, gives
\[
  \int_{\mathbb S^d}Q_hF(\eta)Y(\eta)\,d\nu(\eta)=\widehat{\sigma_k}(\ell)\sum_{j=1}^{\widetilde J}\tau_jG_N(\widetilde\vartheta_j)Y(\widetilde\vartheta_j).
\]
If $\widehat{\sigma_k}(\ell)\ne0$, exactness for the polynomial $G_NY$, whose degree is at most $2N$, and orthogonality of distinct harmonic degrees yield
\[
  \sum_{j=1}^{\widetilde J}\tau_jG_N(\widetilde\vartheta_j)Y(\widetilde\vartheta_j)=\int_{\mathbb S^d}G_N(\eta)Y(\eta)\,d\nu(\eta)=\widehat{\sigma_k}(\ell)^{-1}\int_{\mathbb S^d}F(\eta)Y(\eta)\,d\nu(\eta).
\]
If $\widehat{\sigma_k}(\ell)=0$, then $\ell>k$ and $\ell-k$ is even. In this case $Y(-\eta)=(-1)^\ell Y(\eta)$ and the symmetry of $F$ make $FY$ odd, so $\int_{\mathbb S^d}FY\,d\nu=0$; the Funk--Hecke formula also gives $\int_{\mathbb S^d}(Q_hF)Y\,d\nu=0$. Thus, in both cases, $\int_{\mathbb S^d}(F-Q_hF)Y\,d\nu=0$. Since $Y$ was arbitrary in its harmonic space,
\[
  \Pi_\ell(Q_hF)=\Pi_\ell F,\qquad 0\le\ell\le N.
\]

The identities for $0\le\ell\le N$ leave only the two tails:
\[
  \|F-Q_hF\|_{H^q(\mathbb S^d)}^2\le 2\sum_{m>N}(1+m)^{2q}\|\Pi_m F\|_{L^2(\nu)}^2+2\sum_{m>N}(1+m)^{2q}\|\Pi_m(Q_hF)\|_{L^2(\nu)}^2.
\]
For the first tail, write $(1+m)^{2q}=(1+m)^{2s_k}(1+m)^{-2(s_k-q)}$ and use $m>N$ to obtain
\[
  \sum_{m>N}(1+m)^{2q}\|\Pi_m F\|_{L^2(\nu)}^2\le(1+N)^{-2(s_k-q)}\|F\|_{H^{s_k}(\mathbb S^d)}^2.
\]
For the network tail, the summation-by-parts estimate in \cite[equations~(4.16)--(4.18)]{LiuMaoXu2025Integral}, applied to the quasiuniform completed nodes, gives
\[
  \sum_{m>N}(1+m)^{2q}\|\Pi_m(Q_hF)\|_{L^2(\nu)}^2
  \le C_qN^{-(2k+1-2q)}\|\lambda(F)\|_2^2.
\]
The series in that estimate is $\sum_{m>N}m^{2q-2k-2}$, which converges for $q<k+1/2$; see also \cite[Theorem~6]{LiuMaoXu2026DeterministicLS} for the approximation result in this range. Since $N\asymp h^{-1}$, $\widetilde J\asymp h^{-d}$, and $2s_k=d+2k+1$, \eqref{eq:spherical-critical-coefficient-bound} gives $N^{-(2k+1-2q)}\|\lambda(F)\|_2^2\le C_qh^{2(s_k-q)}\|F\|_{H^{s_k}(\mathbb S^d)}^2$. Combining the two tails and taking square roots yields
\begin{equation}
\label{eq:same-spherical-operator-all-orders}
  \|F-Q_hF\|_{H^q(\mathbb S^d)}\le C_qh^{s_k-q}\|F\|_{H^{s_k}(\mathbb S^d)},\qquad 0\le q<k+\frac12.
\end{equation}

Define $\mathcal Q_hf(x):=|(x,1)|^kQ_hF((x,1)/|(x,1)|)$ for $x\in B_\varrho$. Positive homogeneity gives, for each node,
\[
  |(x,1)|^k\sigma_k\!\left(\widetilde\vartheta_j\cdot\frac{(x,1)}{|(x,1)|}\right)=\sigma_k(\widetilde\vartheta_j\cdot(x,1)).
\]
The original nodes therefore give $f_h(x)=\sum_{j=1}^{J}\lambda_j(F)\sigma_k(\vartheta_j\cdot(x,1))$, with coefficient vector $c=(\lambda_1(F),\ldots,\lambda_J(F))$. A nonpolar added node has $\widetilde\vartheta=(a,t)/\sqrt{1+t^2}$ with $|t|\ge C_b>1>\varrho$, and its contribution is
\[
  \sigma_k(\widetilde\vartheta\cdot(x,1))=(1+t^2)^{-k/2}(a\cdot x+t)_+^k.
\]
For $x\in B_\varrho$, $|a\cdot x|<\varrho$. If $t>0$, then $a\cdot x+t>0$, so the positive part can be removed and the expression is a polynomial of degree at most $k$; if $t<0$, the expression is zero. The north and south poles give the constants $1$ and $0$, respectively. Summing the polynomial expressions of the added nodes defines $p_h\in\mathcal P_k(\mathbb R^d)$ and gives $\mathcal Q_hf=f_h+p_h$ on $B_\varrho$.

The representations of $f$ and $\mathcal Q_hf$ give
\[
  f(x)-\mathcal Q_hf(x)=|(x,1)|^k(F-Q_hF)((x,1)/|(x,1)|),\qquad x\in B_\varrho.
\]
The coordinate map and multiplier are smooth on the fixed ball $B_1$, and the inverse coordinate map on its spherical cap has denominator $\eta_{d+1}\ge1/\sqrt2$. The Sobolev estimates for coordinate changes and multiplication in \cite{AdamsFournier2003} bound this pullback at integer orders. The interpolation theorem in \cite[Theorem~4.1.2]{BerghLofstrom1976} extends these bounds to fractional orders on $B_1$; restriction to $B_\varrho$ preserves uniformity in $\varrho$. Combining this with \eqref{eq:same-spherical-operator-all-orders} and \eqref{eq:spherical-lift-bound} gives
\[
  \|f-\mathcal Q_hf\|_{H^q(B_\varrho)}\le C_q\|F-Q_hF\|_{H^q(\mathbb S^d)}\le C_qh^{s_k-q}\|f\|_{H^{s_k}(B_\varrho)}.
\]
This proves \eqref{eq:fixed-parameter-critical-approximation}.

It remains to bound the coefficients after this decomposition. Since $c$ is a subvector of $\lambda(F)$ and $J\le\widetilde J$, one has $\sqrt J\|c\|_2\le\sqrt{\widetilde J}\|\lambda(F)\|_2$. For an added node written as $(v,s)\in\mathbb S^d$, every nonzero polynomial contribution is $(v\cdot x+s)^k$. The multinomial formula and the definition of the polynomial coefficient norm give
\[
  \|(v\cdot x+s)^k\|_{\mathcal P_k}=\left(|s|+\sum_{i=1}^d|v_i|\right)^k\le(d+1)^{k/2},
\]
where the last inequality uses $|v|^2+s^2=1$. The triangle inequality and Cauchy--Schwarz now show
\[
  \|p_h\|_{\mathcal P_k}\le C\sum_{j=J+1}^{\widetilde J}|\lambda_j(F)|\le C\sqrt{\widetilde J}\|\lambda(F)\|_2.
\]
Combining these bounds with \eqref{eq:spherical-critical-coefficient-bound} and \eqref{eq:spherical-lift-bound} proves
\[
  \sqrt J\|c\|_2+\|p_h\|_{\mathcal P_k}\le C\sqrt{\widetilde J}\|\lambda(F)\|_2\le C\|f\|_{H^{s_k}(B_\varrho)},
\]
which is \eqref{eq:fixed-parameter-critical-coefficient}. All constructions are linear in $f$ and independent of $q$, so the same approximation operator satisfies the estimates for every fixed $q$ in the stated range.
\end{proof}

We next represent the polynomial term in \autoref{lem:fixed-parameter-critical-sobolev} by $\mathrm{ReLU}^k$ networks, retaining coefficient control under small parameter perturbations.

\begin{lemma}
\label{lem:polynomial-ridge-reproduction}
There exist pairwise distinct directions $\bar a_1,\ldots,\bar a_L\in\mathbb S^{d-1}$, with $L=L(d,k)$, distinct constants $2<\beta_0,\ldots,\beta_k<C_b$, and $\varepsilon_0>0$ with the following property. Suppose $a'_{\ell r}\in\mathbb S^{d-1}$ and $\beta'_{\ell r}\in\mathbb R$ satisfy $\rho_{\mathbb S^{d-1}}(a'_{\ell r},\bar a_\ell)\le\varepsilon_0$ and $|\beta'_{\ell r}-\beta_r|\le\varepsilon_0$ for every $\ell,r$. For every $R\ge1$ and $p\in\mathcal P_k$, there exist coefficients $d_{\ell r}$ such that
\[
  p(x/(1+R))=\sum_{\ell=1}^{L}\sum_{r=0}^{k}d_{\ell r}(a'_{\ell r}\cdot x+\beta'_{\ell r}(1+R))_+^k,\qquad x\in B_R,
\]
and
\begin{equation}
\label{eq:scaled-poly-ridge-coeff-bound}
  \sum_{\ell,r}|d_{\ell r}|\le C(1+R)^{-k}\|p\|_{\mathcal P_k}.
\end{equation}
The constants depend only on $d,k,C_b$.
\end{lemma}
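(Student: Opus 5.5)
The plan is to reduce everything to a reference configuration and then use a stability argument for small perturbations. First I will change variables. Write $y=x/(1+R)\in B_{R/(1+R)}\subset B_1$. Then
\[
(a'\cdot x+\beta'(1+R))_+^k=(1+R)^k(a'\cdot y+\beta')_+^k .
\]
Because $|a'\cdot y|<1<2<\beta'-\varepsilon_0$ once $\varepsilon_0<\beta_0-2$, the argument is positive on $B_1$. So the positive part can be dropped, and each neuron equals the polynomial $(1+R)^k(a'\cdot y+\beta')^k$ on $B_R$. The claim therefore reduces to a statement independent of $R$: find coefficients $e_{\ell r}$ with
\[
p(y)=\sum_{\ell,r}e_{\ell r}(a'_{\ell r}\cdot y+\beta'_{\ell r})^k\quad\text{on }\mathcal P_k,\qquad \sum|e_{\ell r}|\le C\|p\|_{\mathcal P_k}.
\]
Setting $d_{\ell r}=(1+R)^{-k}e_{\ell r}$ then gives \eqref{eq:scaled-poly-ridge-coeff-bound}.

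Next I fix the reference configuration. Choose distinct $\beta_0,\dots,\beta_k\in(2,C_b)$, which is possible since $C_b>2$. Expanding in $\beta$,
\[
(a\cdot y+\beta)^k=\sum_{j=0}^k\binom kj\beta^{k-j}(a\cdot y)^j .
\]
Inverting the Vandermonde matrix in the $\beta_r$ shows that the span of $\{(a\cdot y+\beta_r)^k\}_{r=0}^{k}$ contains every power $(a\cdot y)^j$, $0\le j\le k$. Now choose directions $\bar a_1,\dots,\bar a_L$ such that, for each $j\le k$, the ridge powers $(\bar a_\ell\cdot y)^j$ span the homogeneous polynomials of degree $j$. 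This is the classical fact that a generic finite set of directions works. One can take $\bar a_\ell$ to be distinct and avoid finitely many algebraic degeneracy conditions, with $L=\dim$ of the homogeneous polynomials of degree $k$ (enough points in general position serve every degree $j\le k$ at once). It follows that the linear map
\[
T(\{a_{\ell r},\beta_{\ell r}\}):\mathbb R^{L(k+1)}\to\mathcal P_k,\qquad e\mapsto\sum e_{\ell r}(a_{\ell r}\cdot y+\beta_{\ell r})^k
\]
is surjective at the reference point $(\bar a_\ell,\beta_r)$.

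The main step, and the only real obstacle, is uniformity under perturbation. The entries of $T$ in the monomial basis are polynomial, hence continuous, in the parameters. Surjectivity is equivalent to $TT^\top$ being invertible, which is an open condition. So there is $\varepsilon_0>0$ such that, for all perturbed parameters within $\varepsilon_0$ of the reference point, $T$ remains surjective and its minimal-norm right inverse $T^\top(TT^\top)^{-1}$ has uniformly bounded operator norm. This follows from compactness of the closed $\varepsilon_0$-neighbourhood. One point needs checking: the perturbations are taken independently for each pair $(\ell,r)$. That is harmless, because the continuity argument treats all $L(k+1)$ parameter pairs as independent variables. Using the $\ell^1$ and $\mathcal P_k$ norms on finite-dimensional spaces gives $\sum|e_{\ell r}|\le C\|p\|_{\mathcal P_k}$, with $C$ depending only on $d,k,C_b$ through the fixed choices above. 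Substituting back in the original variable $x$ then completes the proof.
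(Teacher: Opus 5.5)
Your proposal is correct and follows essentially the same route as the paper. Both use Vandermonde inversion in the biases together with directions whose ridge powers span each homogeneous degree, then a continuity/compactness argument for a uniformly bounded right inverse under independent perturbations; you use $T^\top(TT^\top)^{-1}$ where the paper uses a nonsingular square submatrix, and the $(1+R)$-scaling and positivity on $B_R$ are handled as in the paper. The only slip is cosmetic: the positivity chain should read $|a'\cdot y|<1<2<\beta_r-\varepsilon_0\le\beta'$ with $\varepsilon_0<\min_r\beta_r-2$, rather than $2<\beta'-\varepsilon_0$.
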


\begin{proof}
Choose finitely many distinct directions $\bar a_\ell$ so that $(\bar a_\ell\cdot y)^m$ span the homogeneous polynomials of degree $m$ for every $0\le m\le k$; for $d=1$, take $L=1$ and $\bar a_1=1$. For distinct $\beta_r\in(2,C_b)$, the matrix $\bigl(\binom{k}{m}\beta_r^{k-m}\bigr)_{0\le m,r\le k}$ is an invertible Vandermonde matrix up to row scaling and reversal. Thus $(\bar a_\ell\cdot y+\beta_r)^k$ span $\mathcal P_k$. Their monomial coefficient matrix has a nonsingular square submatrix, whose inverse remains uniformly bounded under sufficiently small independent perturbations of all parameters. The perturbed coefficient matrix therefore has a uniformly bounded right inverse. Choose $\varepsilon_0$ small enough that also $\beta'_{\ell r}>3/2$.

Applying this right inverse to $(1+R)^{-k}p(y)$ gives $(1+R)^{-k}p(y)=\sum_{\ell,r}d_{\ell r}(a'_{\ell r}\cdot y+\beta'_{\ell r})^k$ and \eqref{eq:scaled-poly-ridge-coeff-bound}. Substitute $y=x/(1+R)$ and multiply by $(1+R)^k$. Since $a'_{\ell r}\cdot x+\beta'_{\ell r}(1+R)>0$ on $B_R$, the powers equal their positive parts, proving the stated representation.

The same network functions also represent every $p\in\mathcal P_k$ on $B_R$ with coefficient sum at most $C\max_{|\alpha|\le k}\|D^\alpha p\|_{L^\infty(B_R)}$. To see this, replace $p(y)$ by $p((1+R)y)$ in the construction and use $(1+R)^{-k}\|p((1+R)\,\cdot)\|_{\mathcal P_k}=\sum_{|\alpha|\le k}(1+R)^{|\alpha|-k}|D^\alpha p(0)|/\alpha!\le C\max_{|\alpha|\le k}\|D^\alpha p\|_{L^\infty(B_R)}$.
\end{proof}

We now select the parameters required by Lemmas~\ref{lem:fixed-parameter-critical-sobolev} and~\ref{lem:polynomial-ridge-reproduction} from a prescribed dictionary satisfying \eqref{eq:ordinary-parameter-geometry}.

\begin{lemma}
\label{lem:ordinary-euclidean-parameter-family}
There exists $h_0\in(0,1]$, depending only on $d,k,C_b$ and the fixed quasiuniformity constants in \eqref{eq:ordinary-parameter-geometry}, such that, for every $R\ge1$ and $0<h\le h_0$, any dictionary $\Xi_{h,R}\subset\Theta_R$ satisfying \eqref{eq:ordinary-parameter-geometry} admits the following two subsets.
\begin{enumerate}
\item[\textup{(i)}] There exists
\[
  \widehat{\Xi}_{h,R}\subset\Xi_{h,R},
  \qquad
  \#\widehat{\Xi}_{h,R}\asymp h^{-d},
\]
such that, under the map $\Pi_R$ from \eqref{eq:scaled-parameter-map}, the set $\Pi_R(\widehat{\Xi}_{h,R})$ is quasiuniform at spherical scale $h$ on the strip $\Gamma_{C_b}$ from \eqref{eq:spherical-strip-definition}.

\item[\textup{(ii)}] There exists a fixed finite subset $P_{h,R}\subset\Xi_{h,R}$ such that every polynomial in $\mathcal P_k(\mathbb R^d)$ is reproduced exactly on $B_R$, with the uniform coefficient bounds in \autoref{lem:polynomial-ridge-reproduction}.
\end{enumerate}
Both conclusions are formulated entirely in terms of the ordinary $d_0$ fill distance and separation.
\end{lemma}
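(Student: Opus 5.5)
The plan is to transport the $d_0$ geometry of $\Xi_{h,R}$ to the sphere through $\Pi_R$ and then to select subsets by a greedy (maximal separated set) argument for part (i), and by nearest-neighbour selection for part (ii).

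\emph{Step 1: $\Pi_R$ is bi-Lipschitz.} Write $t=-b/(1+R)$, so that $|t|\le C_b$ for $(a,b)\in\Theta_R$. The map $(a,t)\mapsto(a,t)/\sqrt{1+t^2}$ is a smooth diffeomorphism of $\mathbb S^{d-1}\times[-C_b,C_b]$ onto $\Gamma_{C_b}$. Its Jacobian is bounded above and below by constants depending only on $d,C_b$. Consequently the geodesic distance between $\Pi_R(a,b)$ and $\Pi_R(a',b')$ is comparable to
\[
\rho_{\mathbb S^{d-1}}(a,a')+\frac{|b-b'|}{1+R},
\]
uniformly in $R\ge1$. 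The distance induced on $\Theta_R$ in this way is $d_R:=\rho_{\mathbb S^{d-1}}(a,a')+|b-b'|/(1+R)$, and it satisfies $d_R\le d_0$.

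\emph{Step 2: covering at scale $h$ in $d_R$.} Fix $\theta\in\Theta_R$. Since $h(\Xi_{h,R})\asymp h$, there is $\xi\in\Xi_{h,R}$ with $d_0(\theta,\xi)\le Ch$. Because $d_R\le d_0$, the same $\xi$ satisfies $d_R(\theta,\xi)\le Ch$. Thus $\Pi_R(\Xi_{h,R})$ is $Ch$-dense in $\Gamma_{C_b}$.

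\emph{Step 3: part (i).} The full image $\Pi_R(\Xi_{h,R})$ is not separated at scale $h$, since biases at mutual distance $\asymp h$ collapse to distance $\asymp h/(1+R)$. I therefore extract a maximal subset $\widehat\Xi_{h,R}\subset\Xi_{h,R}$ whose images are pairwise at geodesic distance at least $c_1h$. Maximality means every point of $\Xi_{h,R}$ lies within $c_1h$ of a selected point, so $\widehat\Xi_{h,R}$ is $(C+c_1)h$-dense and hence quasiuniform at scale $h$. Spherical packing and covering bounds on the strip then give $\#\widehat\Xi_{h,R}\asymp h^{-d}$. This step uses only the fill distance and holds for every $R$.

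\emph{Step 4: part (ii).} Let $\bar a_\ell$ and $\beta_r$ be as in \autoref{lem:polynomial-ridge-reproduction}. The target parameters are $(\bar a_\ell,-\beta_r(1+R))$, since then $\phi_{a,b}(x)=(a\cdot x+\beta(1+R))_+^k$. These targets lie in $\Theta_R$ because $2<\beta_r<C_b$.
\begin{itemize}
\item By the $d_0$ fill distance, each target has a neighbour $(a'_{\ell r},b'_{\ell r})\in\Xi_{h,R}$ with $d_0\le Ch$.
\item Set $\beta'_{\ell r}=-b'_{\ell r}/(1+R)$. Then $\rho_{\mathbb S^{d-1}}(a'_{\ell r},\bar a_\ell)\le Ch$ and $|\beta'_{\ell r}-\beta_r|\le Ch/(1+R)\le Ch$.
\item Choosing $h_0$ so that $Ch_0\le\varepsilon_0$ puts all selected parameters within the perturbation tolerance of \autoref{lem:polynomial-ridge-reproduction}.
\end{itemize}
That lemma then yields the exact reproduction of $\mathcal P_k$ on $B_R$ together with \eqref{eq:scaled-poly-ridge-coeff-bound}. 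Distinct $(\ell,r)$ may select the same neuron; in that case I merge their coefficients, which only helps the $\ell^1$ bound. Alternatively, shrinking $h_0$ below the separation of the targets guarantees the selected neurons are distinct.

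\emph{Main obstacle.} The delicate point is Step 1 together with the count in Step 3. I need the bi-Lipschitz constants of $\Pi_R$ to be uniform in $R$ and valid up to the boundary of the strip. For $d=1$, $\mathbb S^0$ has two points and the strip is a union of two arcs, which the same argument handles. The volume comparison on $\Gamma_{C_b}$ also has to be uniform up to the strip boundary so that $\#\widehat\Xi_{h,R}\asymp h^{-d}$. I will handle both by working in the coordinates $(a,t)$ on the compact product $\mathbb S^{d-1}\times[-C_b,C_b]$, where $R$ no longer appears.
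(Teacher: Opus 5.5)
Your proposal is correct and follows essentially the same route as the paper. The uniform distance comparison for $\Pi_R$ transfers the $d_0$ fill distance to the strip, and a maximal $ch$-separated subset of $\Pi_R(\Xi_{h,R})$ gives the quasiuniform family with $\#\widehat\Xi_{h,R}\asymp h^{-d}$; nearest dictionary neighbours of the targets $(\bar a_\ell,-\beta_r(1+R))$ then lie within the perturbation tolerance of \autoref{lem:polynomial-ridge-reproduction} once $h_0$ is small. Of your two options for making the selected neurons distinct, the paper uses the second: it shrinks $h_0$ below the $R$-independent separation of the targets.
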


\begin{proof}
We first select the subset used for Sobolev approximation with fixed parameters. Set $\Theta_h:=\Pi_R(\Xi_{h,R})\subset\Gamma_{C_b}$. The map
\[
  (a,t)\mapsto\frac{(a,t)}{\sqrt{1+t^2}},
  \qquad (a,t)\in\mathbb S^{d-1}\times[-C_b,C_b],
\]
is a smooth embedding whose differential has uniform upper and lower bounds on this compact set. Hence,
\begin{equation}
\label{eq:strip-map-distance-comparison}
  \rho_{\mathbb S^d}(\Pi_R(a,b),\Pi_R(a',b'))
  \asymp
  \rho_{\mathbb S^{d-1}}(a,a')+
  \frac{|b-b'|}{1+R}.
\end{equation}
Fix $\eta=\Pi_R(a,b)\in\Gamma_{C_b}$. By the ordinary fill distance property, there exists $(a',b')\in\Xi_{h,R}$ such that $d_0((a,b),(a',b'))\lesssim h.$ Consequently, \eqref{eq:strip-map-distance-comparison} gives $\rho_{\mathbb S^d}(\eta,\Pi_R(a',b'))\lesssim h.$ Thus, $\Theta_h$ has $O(h)$ fill distance on $\Gamma_{C_b}$.

Choose a subset $\widehat\Theta_h$ of $\Theta_h$ that is maximal subject to pairwise distances of at least $ch$, with fixed $c>0$ sufficiently small. Maximality implies that its fill distance in $\Theta_h$ is less than $ch$, so it still has $O(h)$ fill distance on the entire strip. Since the strip is a fixed compact smooth manifold of dimension $d$, standard spherical packing and covering estimates give $\#\widehat\Theta_h\asymp h^{-d}.$ Taking the preimage under $\Pi_R$ gives $\widehat\Xi_{h,R}$ and proves \textup{(i)}.

For the polynomial stencil, let $\bar a_\ell,\beta_r$ be the fixed data from \autoref{lem:polynomial-ridge-reproduction} and define the target parameters $\bar\theta_{\ell r}:=(\bar a_\ell,-\beta_r(1+R)).$ The ordinary fill distance provides $\theta_{\ell r}=(a_{\ell r},b_{\ell r})\in\Xi_{h,R}$ such that $d_0(\theta_{\ell r},\bar\theta_{\ell r})\lesssim h.$ The target parameters have a positive mutual separation bounded below independently of $R$. Choose $h_0$ small enough that the selected points are pairwise distinct. Moreover,
\[
  \rho_{\mathbb S^{d-1}}(a_{\ell r},\bar a_\ell)\lesssim h,
  \qquad
  \left|-\frac{b_{\ell r}}{1+R}-\beta_r\right|\lesssim h.
\]
For sufficiently small $h_0$, these parameters satisfy the hypotheses of \autoref{lem:polynomial-ridge-reproduction}. Define
\[
  P_{h,R}
  :=\{\theta_{\ell r}:1\le\ell\le L,\ 0\le r\le k\}
  \subset\Xi_{h,R}.
\]
That lemma gives exact polynomial reproduction and the uniform coefficient bounds.
\end{proof}

To combine approximation with the inverse inequality, we next construct parameter sets whose coverage and antipodal separation have the same scale.

\begin{lemma}
\label{lem:shifted-product-parameters}
For $d\ge2$, let $A_h\subset\mathbb S^{d-1}$ be quasiuniform at scale $h$; for $d=1$, set $A_h=\mathbb S^0$. For every $R\ge1$ and sufficiently small $h>0$, there exists a set $B_{h,R}\subset[-C_b(1+R),C_b(1+R)]$ such that $\Xi_{h,R}:=A_h\times B_{h,R}$ satisfies
\begin{equation}
\label{eq:shifted-product-geometry}
  h(\Xi_{h,R})\asymp\delta(\Xi_{h,R})\asymp\delta_\pm(\Xi_{h,R})\asymp h,\qquad \#\Xi_{h,R}\asymp(1+R)h^{-d},
\end{equation}
with constants independent of $h,R$.
\end{lemma}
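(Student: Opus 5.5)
The plan is to build $B_{h,R}$ as a shifted uniform grid, so that exact antipodal coincidences $b\approx -b'$ are excluded. Take $n:=\lceil 2C_b(1+R)/h\rceil$ and spacing $\Delta:=2C_b(1+R)/n\asymp h$. Define
\[
  B_{h,R}:=\{-C_b(1+R)+(t+\tfrac14)\Delta:\ t=0,\ldots,n-1\}.
\]
Every bias then has the form $(m+\tfrac14)\Delta-C_b(1+R)$ with $m$ an integer. The sum of two biases is $(m+m'+\tfrac12)\Delta-2C_b(1+R)$. Since $2C_b(1+R)=n\Delta$, this sum equals $(m+m'-n+\tfrac12)\Delta$, a half-integer multiple of $\Delta$. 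Hence $|b+b'|\ge\Delta/2$ for all $b,b'\in B_{h,R}$, including $b=b'$. This is the same shift by $3/4$ used in the experiments.

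The four geometric quantities then follow in turn.

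\textbf{Count.} We have $\#\Xi_{h,R}=\#A_h\cdot n\asymp h^{-(d-1)}(1+R)h^{-1}$. For $d=1$, $\#A_h=2$, which is consistent.

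\textbf{Fill distance.} Given $(a,b)\in\Theta_R$, pick $a'\in A_h$ with $\rho(a,a')\lesssim h$ and $b'\in B_{h,R}$ with $|b-b'|\le\Delta$, so $d_0\lesssim h$. The fill distance is at least of order $h$, because the point count would otherwise be too small; alternatively, take $b$ midway between grid points.

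\textbf{Ordinary separation.} For distinct pairs, either $a\ne a'$, so $\rho(a,a')\gtrsim h$ by quasiuniformity of $A_h$ (for $d=1$ the distance is $\pi$), or $a=a'$ and $|b-b'|\ge\Delta$. Thus $\delta\gtrsim h$, and $\delta\le\Delta\lesssim h$.

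\textbf{Antipodal separation.} For any two parameters, $d_\pi\ge|b+b'|\ge\Delta/2\gtrsim h$. Hence $\delta_\pm=\min\{\delta,\min d_\pi\}\gtrsim h$. The upper bound $\delta_\pm\le\delta\lesssim h$ is immediate.

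The main point of care is the antipodal bound. It must hold for pairs with the same direction as well, namely $a'=a$ with $d_\pi$ measured against $-a$, and for pairs $a'=-a$ when $A_h$ happens to be antipodally symmetric. The half-integer argument handles both cases uniformly, because it uses only the bias term and ignores directions entirely. It remains to check that the grid lies in $[-C_b(1+R),C_b(1+R)]$, which holds since $t+\tfrac14\in[\tfrac14,n-\tfrac34]$. We also need ``sufficiently small $h$'' so that $n\ge2$ and the quasiuniform $A_h$ exists with constants independent of $h$. All constants depend only on $d$, $C_b$ and the quasiuniformity constants of $A_h$, and not on $R$.
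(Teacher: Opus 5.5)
Your proof is correct and follows the paper's argument. Both use a uniform bias grid with spacing $\Delta\asymp h$, shifted by a fractional offset so that every sum $b+b'$ is a half-integer multiple of $\Delta$; this gives $d_\pi\ge|b+b'|\ge\Delta/2$ regardless of the directions, and the count, fill distance and ordinary separation are then read off from the product structure. The only difference is cosmetic: you shift by $1/4$ where the paper shifts by $3/4$. Your grid is the reflection $b\mapsto-b$ of the paper's, so your remark that it is ``the same shift by $3/4$'' is slightly inaccurate but does not affect the argument.
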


\begin{proof}
Set $n_b:=\lceil2C_b(1+R)/h\rceil$ and $\Delta_b:=2C_b(1+R)/n_b$, and define
\[
  B_{h,R}:=\{-C_b(1+R)+(m+3/4)\Delta_b:0\le m<n_b\}.
\]
For sufficiently small $h$, $ch\le\Delta_b\le h$, with $c>0$ independent of $R,h$. The distances from the bias grid to the two interval endpoints are $3\Delta_b/4$ and $\Delta_b/4$, while the largest interior gap is $\Delta_b$. Hence, both its ordinary fill distance and its separation are comparable to $h$. If $d\ge2$, the $d_0$ fill distance of the product set is controlled by the sum of the directional and bias fill distances, and its ordinary separation is controlled by the smaller of the two corresponding separations. Together with $\#A_h\asymp h^{-(d-1)}$ and $n_b\asymp(1+R)h^{-1}$, this gives the stated fill distance, ordinary separation, and cardinality bounds. If $d=1$, the directional set is the fixed sphere $\mathbb S^0$, consisting of two points, so the bias grid alone determines the small geometric scale and $\#A_h=2$ gives the same cardinality conclusion.

For $b_m,b_n\in B_{h,R}$, using $2C_b(1+R)=n_b\Delta_b$, $b_m+b_n=(m+n+3/2-n_b)\Delta_b.$ Since $m+n-n_b\in\mathbb Z$ and $\operatorname{dist}(3/2,\mathbb Z)=1/2$, $|b_m+b_n|\ge\frac12\Delta_b\gtrsim h$ for all $m,n.$ Thus, all $d_\pi$ distances have a uniform lower bound $ch$. Combining this with $\delta(\Xi_{h,R})\asymp h$ and $\delta_\pm\le\delta$ yields \eqref{eq:shifted-product-geometry}.
\end{proof}

\subsection{Deterministic sampling}
\label{subsec:fractional-sampling}

In this subsection, we establish deterministic sampling inequalities and the operator, tail, and stability estimates used in both sampling arguments.

For the sampling estimates, fix $c_0>0$ and let $0\le r\le S<k$ be integers with $k-r>(d-1)/2$. The order relations \eqref{eq:deterministic-order-identities} will be used in both sampling arguments.

The sampling conditions below are $h_X\lesssim H$ and $q_X\gtrsim H$, with implicit constants fixed independently of $H,R,N$. Choose a fixed $a>0$ sufficiently large that $aH>h_X$, and then fix $A>a$ sufficiently large in terms of $d,k-r$ and these constants. For a deterministic physical center set $X=\{x_i\}_{i=1}^N\subset B_R$, define
\begin{equation}
\label{eq:fractional-patches}
\begin{aligned}
  Q_i&:=B(x_i,aH)\cap B_R,
  &Q_i^*&:=B(x_i,AH)\cap B_R,\\
  I_i&:=\{j:x_j\in Q_i^*\},
  &X_i&:=\{x_j:j\in I_i\}.
\end{aligned}
\end{equation}
On all local patches $Q$ and their normalized copies in the deterministic and random sampling arguments, we equip $H^t(Q)$ with the interpolation norm of $[H^n(Q),H^{n+1}(Q)]_\theta$, where $t=n+\theta$, $n\in\mathbb N_0$, and $0<\theta<1$. Here $[\cdot,\cdot]_\theta$ denotes complex interpolation with the intrinsic integer derivative norms at the endpoints. For this patch family, set
\begin{equation}
\label{eq:fractional-patch-norm}
  \|g\|_{t,H}^2
  :=
  \sum_{i=1}^N\rho_d(x_i)
  \|g\|_{H^t(Q_i^*)}^2.
\end{equation}
For integer $t$, the same notation denotes the corresponding weighted direct sum of intrinsic $H^t(Q_i^*)$ norms. On the growing balls $B_R$, $R\ge1$, we retain the Bessel potential norm obtained by restriction from $\mathbb R^d$. Unless stated otherwise, constants depend only on the fixed geometric, regularity, weight, and operator data.

Let $q=n+\theta$, where $n\in\mathbb N_0$ and $0<\theta<1$. Write
\[
  \|v\|_{W^{q,2}(Q)}^2:=\sum_{|\beta|\le n}\|D^\beta v\|_{L^2(Q)}^2+\sum_{|\alpha|=n}[D^\alpha v]_{W^{\theta,2}(Q)}^2,
\]
where $[w]_{W^{\theta,2}(Q)}^2:=\int_Q\int_Q|w(y)-w(z)|^2|y-z|^{-d-2\theta}\,dy\,dz$.

The local sampling argument requires extension and norm comparison bounds that are uniform over the growing balls and the normalized patches.

\begin{lemma}
\label{lem:uniform-fractional-interfaces}
\label{lem:fractional-patch-structure}
\label{lem:fractional-interpolation-intrinsic-bridge}
\label{lem:fractional-gaussian-localization}
\label{lem:uniform-growing-ball-interpolation}
Let $Q=B_R$ or $Q=(AH)^{-1}((B(x,AH)\cap B_R)-x)$, with $R\ge1$, $x\in B_R$, and $0<AH\le R/4$. For every integer $L\ge1$, there is a common linear extension $E_Q$, with $(E_Qv)|_Q=v$, satisfying
\begin{equation}
\label{eq:uniform-sobolev-extension}
  \|E_Qv\|_{H^t(\mathbb R^d)}\le C\|v\|_{H^t(Q)},\qquad 0\le t\le L.
\end{equation}
The spaces $H^q(Q)$, $[H^n(Q),H^{n+1}(Q)]_\theta$, and $W^{q,2}(Q)$ coincide, with uniform norm equivalence
\begin{equation}
\label{eq:uniform-fractional-norm-equivalence}
  \|v\|_{H^q(Q)}\asymp\|v\|_{[H^n(Q),H^{n+1}(Q)]_\theta}\asymp\|v\|_{W^{q,2}(Q)}.
\end{equation}

If $q_X\gtrsim H$, $H(R+H)\le c_0$, and $AH\le R/4$, then for every noninteger $t\ge0$,
\begin{equation}
\label{eq:fractional-gaussian-localization}
  \|g\|_{t,H}\le C\|g\|_{\mathcal H_\gamma^t(\mathbb R^d)}.
\end{equation}
\end{lemma}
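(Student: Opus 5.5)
The lemma has three parts: uniform extension, uniform equivalence of fractional norms, and Gaussian localization. I will handle them in that order, reducing everything to domains with uniformly controlled Lipschitz (in fact uniform cone/interior ball) geometry.

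\emph{Step 1: geometry.} For $Q=B_R$ with $R\ge1$, rescaling $x\mapsto x/R$ gives the unit ball, but rescaling destroys uniformity of $H^t$ norms for large $R$. I would therefore not rescale. Instead I would use that $B_R$, $R\ge1$, is a uniformly Lipschitz domain at unit scale: every boundary point has a neighbourhood of fixed radius $1/2$ in which $\partial B_R$ is a Lipschitz graph with constant bounded independently of $R$ (curvature $1/R\le1$).

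For a normalized patch $Q=(AH)^{-1}((B(x,AH)\cap B_R)-x)$ the situation is similar. It is the intersection of the unit ball with a ball of radius $R/(AH)\ge4$ whose center lies at distance at most $R/(AH)$ from the origin. Such intersections of two balls, the second having radius at least $4$ times the first, are uniformly Lipschitz, since the corner angle stays bounded away from $0$. So both families are uniformly Lipschitz with fixed scale and constants.

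\emph{Step 2: extension and norm equivalence.} Stein's universal extension operator for Lipschitz domains extends from $W^{m,2}$ to $W^{m,2}(\mathbb R^d)$ simultaneously for all $m\le L$. Its norm depends only on the Lipschitz character, through a fixed finite number of charts and a partition of unity at unit scale. On $B_R$ the number of charts grows, but they have bounded overlap, so the local contributions sum in $\ell^2$ with uniform constants. This gives \eqref{eq:uniform-sobolev-extension} at integer orders, and complex interpolation of the same linear operator gives it for all $0\le t\le L$.

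For the norm equivalence \eqref{eq:uniform-fractional-norm-equivalence}, I would argue as follows.
\begin{itemize}
\item Restriction is bounded $H^q(\mathbb R^d)\to W^{q,2}(Q)$ trivially.
\item $W^{q,2}(Q)$ is controlled by $\|E_Qv\|_{W^{q,2}(\mathbb R^d)}\asymp\|E_Qv\|_{H^q(\mathbb R^d)}$, using Stein's extension bounded also on Slobodeckij spaces, again with a Lipschitz-character-dependent constant.
\item The interpolation norm is sandwiched because $E_Q$ and restriction form a uniform retraction pair between the scales $H^n(Q)$ and $H^n(\mathbb R^d)$.
\end{itemize}

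\emph{Step 3: Gaussian localization.} I would write $\|g\|_{t,H}^2=\sum_i\rho_d(x_i)\|g\|_{H^t(Q_i^*)}^2$ and proceed in three moves.
\begin{itemize}
\item \emph{Comparable weight.} Since $|y|\le R+AH$ and $|x_i-y|\le AH$, we have $|\,|y|^2-|x_i|^2|\le AH(2R+AH)\lesssim c_0$. Hence $\rho_d(x_i)\asymp\rho_d(y)$ on $Q_i^*$.
\item \emph{Bounded overlap.} The separation $q_X\gtrsim H$ gives a uniform bound on how many $Q_i^*$ overlap.
\item \emph{Sum local seminorms.} For integer orders, $\sum_i\rho_d(x_i)\|D^\beta g\|_{L^2(Q_i^*)}^2\lesssim\|D^\beta g\|_{L^2_\gamma}^2$, which by \eqref{eq:Hermite-derivative-norm-equivalence} is $\lesssim\|g\|_{\mathcal H_\gamma^m}^2$. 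For noninteger $t=n+\theta$, I would use the Slobodeckij form on each patch. The double integral over $Q_i^*\times Q_i^*$, weighted by $\rho_d(x_i)\asymp\rho_d(y)^{1/2}\rho_d(z)^{1/2}$, sums by bounded overlap to a Gaussian-weighted Slobodeckij seminorm restricted to $|y-z|\le2AH$.
\end{itemize}
It remains to bound that Gaussian Slobodeckij seminorm by $\|g\|_{\mathcal H_\gamma^t}$. I would do this by interpolation. The map $g\mapsto(\rho_d(x_i)^{1/2}g|_{Q_i^*})_i$ is bounded from $\mathcal H_\gamma^n$ to $\ell^2(H^n(Q_i^*))$ and from $\mathcal H_\gamma^{n+1}$ to $\ell^2(H^{n+1}(Q_i^*))$, uniformly. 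The spaces $\mathcal H_\gamma^s$ form a complex interpolation scale, being weighted $\ell^2$ on Hermite coefficients. The interpolation space of the weighted $\ell^2$-sums is the $\ell^2$-sum of the interpolation spaces. Combined with the uniform equivalence from Step 2, this yields \eqref{eq:fractional-gaussian-localization}, and the interpolation route avoids the weighted Slobodeckij computation altogether.

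The main obstacle is Step 2, specifically obtaining extension and fractional norm equivalence constants that are uniform over $B_R$ for unbounded $R$ and over the two-ball patch intersections. Everything else is bookkeeping. I would address it by localizing at unit scale with bounded overlap and invoking Stein's operator chartwise.
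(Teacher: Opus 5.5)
Your proposal is correct in outline, and your Step 3 is essentially the paper's argument. The paper uses the same ingredients: the density comparison $\rho_d(y)\asymp\rho_d(x_i)$ on $Q_i^*$, bounded overlap from $q_X\gtrsim H$, and the integer-order bound via \eqref{eq:Hermite-derivative-norm-equivalence}. It then applies complex interpolation to the weighted restriction map, using $[\mathcal H_\gamma^n,\mathcal H_\gamma^{n+1}]_\theta=\mathcal H_\gamma^{n+\theta}$ and the fact that interpolation commutes with weighted $\ell^2$ sums. Your closing appeal to Step 2 there is unnecessary, since $H^t(Q_i^*)$ is by definition the interpolation space. It also could not help if $H^t(Q_i^*)$ meant the restriction norm, because that equivalence is not uniform on patches whose diameter $AH\to0$.

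The real difference is Step 2. You treat $B_R$ (as a minimally smooth domain) and the two-ball intersections as uniformly Lipschitz at unit scale. You then invoke Stein's extension operator, including its boundedness on intrinsic Slobodeckij spaces with constants depending only on the Lipschitz character. The paper instead proves the fractional equivalence by hand:
\begin{itemize}
\item The bound of the top Slobodeckij seminorm by the interpolation norm, \eqref{eq:interpolation-top-derivative-bridge}, comes from the $K$-functional and integration along segments. It holds on every open convex set, independently of size.
\item The reverse bound on normalized patches uses the smoothing operator $S_t$, which contracts toward an inscribed ball of radius $1/4$.
\item For $B_R$ the paper simply dilates a fixed extension on $B_1$.
\end{itemize}

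Your remark that rescaling destroys uniformity for large $R$ is mistaken. Dilating from $B_1$ to $B_R$ produces the lower-order terms with factors $R^{2(|\beta|-m)}$ and $R^{2(|\beta|-q)}$, all at most $1$ for $R\ge1$. So the dilation route is uniform and avoids the countable-chart theory altogether.

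Your route buys brevity by citing general extension theory. Its cost is that the crucial uniformity claim is taken as a black box: intrinsic $W^{q,2}$ extension with Lipschitz-character-only constants, uniformly over $R$ and over the patch family. That is exactly where the paper does its work. Moreover, the paper's convex-set bridge \eqref{eq:interpolation-top-derivative-bridge} applies directly on the unscaled patches $Q_i^*$ and is reused in the later sampling lemma. Your argument does not provide that.
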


\begin{proof}
For $x\in B_R$ and $0<\rho\le R/4$, the intersection $B(x,\rho)\cap B_R$ is convex, has diameter at most $2\rho$, and contains a ball of radius $\rho/4$. Indeed, if $|x|\le R-\rho/2$, it contains $B(x,\rho/2)$. Otherwise, set $y:=x-\rho x/(2|x|)$. For $z\in B(y,\rho/4)$, one has $|z-x|<3\rho/4<\rho$ and $|z|<|x|-\rho/4<R$, so this ball lies in the intersection. Taking $\rho=AH$ and normalizing, each patch $Q$ is therefore a convex subset of $B_1$ containing a ball of radius $1/4$, with uniform chunkiness and interior cone constants.

On a fixed Lipschitz domain, the common extension at integer orders and the equivalence between the difference quotient norm and the real interpolation norm follow from \cite[Theorems~1.4.5 and~14.2.3]{BrennerScott2008}. For the normalized patches, the uniform Lipschitz charts and cutoffs supplied by this geometry give common extensions at integer orders with uniformly bounded norms by the same construction.

For any open convex $Q$, set $Q_h=\{y\in Q:y+h\in Q\}$. For $w_1\in H^1(Q)$, convexity allows integration along the segment from $y$ to $y+h$, giving $w_1(y+h)-w_1(y)=\int_0^1h\cdot\nabla w_1(y+sh)\,ds$ for almost every $y\in Q_h$. Hence $\|w(\cdot+h)-w\|_{L^2(Q_h)}\le2\|w_0\|_2+|h|\|\nabla w_1\|_2$ whenever $w=w_0+w_1$, with $w_0\in L^2(Q)$ and $w_1\in H^1(Q)$. Define
\[
  K(t,v):=\inf_{v=v_0+v_1}\bigl(\|v_0\|_{H^n(Q)}+t\|v_1\|_{H^{n+1}(Q)}\bigr).
\]
Applying the same decomposition of $v$ to every $D^\alpha v$, $|\alpha|=n$, gives $\sum_{|\alpha|=n}\|D^\alpha v(\cdot+h)-D^\alpha v\|_{L^2(Q_h)}^2\le CK(|h|,v)^2$. Integrating against $|h|^{-d-2\theta}\,dh$ and using polar coordinates, we obtain
\[
  \sum_{|\alpha|=n}[D^\alpha v]_{W^{\theta,2}(Q)}^2\le C\int_0^\infty t^{-1-2\theta}K(t,v)^2\,dt.
\]
The integral is the squared real interpolation norm. The spectral representation of a Hilbert pair and the weighted $L^2$ interpolation formulas in \cite[Theorems~5.4.1 and~5.5.3]{BerghLofstrom1976} identify it, up to constants depending only on $\theta$, with the squared complex interpolation norm. Consequently, for every open convex $Q$, independently of its size,
\begin{equation}
\label{eq:interpolation-top-derivative-bridge}
  \sum_{|\alpha|=n}[D^\alpha v]_{W^{\theta,2}(Q)}^2\le C\|v\|_{[H^n(Q),H^{n+1}(Q)]_\theta}^2.
\end{equation}
In particular, on normalized patches, $\|v\|_{W^{q,2}(Q)}\lesssim\|v\|_{H^q(Q)}$ uniformly.

For the reverse bound on a normalized patch $Q$, choose $\xi$ with $B(\xi,1/4)\subset Q$ and a fixed nonnegative $\eta\in C_c^\infty(B_1)$ with $\int\eta=1$. For $0<t<1/4$, set
\[
  S_tv(y):=\int_{B_1}\eta(z)v((1-t)y+t(\xi+z/4))\,dz.
\]
Convexity keeps all evaluation points in $Q$. Write $S_tw(y)=\int_Q\eta_t(y,z)w(z)\,dz$. The nonnegative kernel has unit mass, column integrals bounded by $(1-t)^{-d}$, support in $|y-z|\le3t$, and bounds $|\eta_t|\le Ct^{-d}$ and $|\nabla_y\eta_t|\le Ct^{-d-1}$. Since $\int_Q\nabla_y\eta_t(y,z)\,dz=0$,
\[
  \nabla S_tw(y)=\int_Q\nabla_y\eta_t(y,z)\bigl(w(z)-w(y)\bigr)\,dz.
\]
Together with $w(y)-S_tw(y)=\int_Q\eta_t(y,z)(w(y)-w(z))\,dz$, Jensen's and Cauchy--Schwarz inequalities give
\[
  \|w-S_tw\|_2^2+t^2\|\nabla S_tw\|_2^2\le Ct^{-d}\int_Q\int_{Q\cap B(y,3t)}|w(y)-w(z)|^2\,dz\,dy.
\]
For $w\in H^1(Q)$, integration along the same convex segments also gives $\|w-S_tw\|_2\le Ct\|\nabla w\|_2$. This controls derivatives below order $n$. For the top derivatives, use the preceding difference estimate and $D^\beta S_tv=(1-t)^{|\beta|}S_tD^\beta v$, noting that $|1-(1-t)^{|\beta|}|\le Ct$ for $|\beta|\le n$. The decomposition $v=(v-S_tv)+S_tv$ therefore gives
\[
  K(t,v)^2\le Ct^2\|v\|_{H^n(Q)}^2+Ct^{-d}\sum_{|\alpha|=n}\int_{Q}\int_{Q\cap B(y,3t)}|D^\alpha v(y)-D^\alpha v(z)|^2\,dz\,dy.
\]
After integration against $t^{-1-2\theta}\,dt$ on $(0,1/4)$, the first term is bounded because $\int_0^{1/4}t^{1-2\theta}\,dt<\infty$. For the second term, Tonelli's theorem and $\int_0^{1/4}t^{-d-1-2\theta}\mathbf1_{\{|y-z|<3t\}}\,dt\le C|y-z|^{-d-2\theta}$ give precisely the fractional seminorms. For $t\ge1/4$, use $K(t,v)\le\|v\|_{H^n(Q)}$. Combining the two ranges yields
\[
  \int_0^\infty t^{-1-2\theta}K(t,v)^2\,dt\le C\|v\|_{W^{q,2}(Q)}^2.
\]
This proves the uniform reverse bound. Interpolating the common extension at integer orders then gives \eqref{eq:uniform-sobolev-extension} on $Q$.

For $B_R$, set $E_{B_R}v(x):=E_{B_1}[v(R\,\cdot)](x/R)$, where $E_{B_1}$ is a common extension on the unit ball. The fixed-domain norm equivalence in \cite{BrennerScott2008} also makes $E_{B_1}$ bounded in $W^{q,2}$. Since $\|D^\beta[v(R\,\cdot)]\|_{L^2(B_1)}=R^{|\beta|-d/2}\|D^\beta v\|_{L^2(B_R)}$, its bounds at integer orders give, for each integer $m\ge0$ under consideration,
\[
  \sum_{|\alpha|=m}\|D^\alpha E_{B_R}v\|_{L^2(\mathbb R^d)}^2\le C\sum_{|\beta|\le m}R^{2(|\beta|-m)}\|D^\beta v\|_{L^2(B_R)}^2.
\]
Scaling the highest fractional seminorm similarly gives
\[
  \sum_{|\alpha|=n}[D^\alpha E_{B_R}v]_{W^{\theta,2}(\mathbb R^d)}^2\le C\sum_{|\beta|\le n}R^{2(|\beta|-q)}\|D^\beta v\|_{L^2(B_R)}^2+C\sum_{|\alpha|=n}[D^\alpha v]_{W^{\theta,2}(B_R)}^2.
\]
All powers of $R$ on the right are at most $1$ for $R\ge1$. Combining these estimates yields uniform integer and intrinsic fractional extension bounds. Interpolating the restriction map and this common extension at the integer endpoints identifies the interpolation norm with the norm obtained by restriction from $\mathbb R^d$, uniformly in $R$. This proves \eqref{eq:uniform-sobolev-extension} and \eqref{eq:uniform-fractional-norm-equivalence} for $B_R$.

Finally, separation gives bounded patch overlap. For $y\in Q_i^*$, $\left|\log\frac{\rho_d(y)}{\rho_d(x_i)}\right|\le R|y-x_i|+\frac12|y-x_i|^2\le AHR+\frac12A^2H^2\le Cc_0$, so
\begin{equation}
\label{eq:patch-density-comparison}
  \rho_d(y)\asymp\rho_d(x_i),\qquad y\in Q_i^*,
\end{equation}
uniformly. Consequently, for every integer $j\ge0$,
\[
  \sum_i\rho_d(x_i)\|g\|_{H^j(Q_i^*)}^2\le C\sum_{|\alpha|\le j}\int_{\mathbb R^d}|D^\alpha g|^2\,d\gamma\le C\|g\|_{\mathcal H_\gamma^j}^2,
\]
where the last inequality uses \eqref{eq:Hermite-derivative-norm-equivalence}. Apply complex interpolation to the same restriction map $g\mapsto(g|_{Q_i^*})_i$ at orders $j=n,n+1$. The Hermite coefficient representation gives $[\mathcal H_\gamma^n,\mathcal H_\gamma^{n+1}]_\theta=\mathcal H_\gamma^{n+\theta}$, while interpolation commutes with the weighted $\ell^2$ direct sums. The weights $\rho_d(x_i)$ are the same at both endpoints, and the interpolated patch spaces are precisely the spaces $H^{n+\theta}(Q_i^*)$ defined above. Thus $\|g\|_{n+\theta,H}\le C\|g\|_{\mathcal H_\gamma^{n+\theta}}$, proving \eqref{eq:fractional-gaussian-localization}.
\end{proof}

These uniform local estimates yield the two sampling bounds needed below. The upper bound controls sampled residuals of the approximant, while the lower bound recovers continuous norms from sampled values.

\begin{lemma}
\label{lem:deterministic-fractional-sampling}
\label{lem:fractional-scaled-point-evaluation}
\label{lem:fractional-local-fill}
\label{lem:fractional-polynomial-norming}
\label{lem:fractional-bramble-hilbert}
\label{lem:fractional-empirical-upper}
\label{lem:fractional-local-sampling}
Choose $A>\max\{a,1\}$ sufficiently large in terms of $d,k-r$ and the sampling constants, and $H_*>0$ with $AH_*\le1$. Let $R\ge1$ and $0<H\le H_*$ satisfy $h_X\lesssim H$, $q_X\gtrsim H$, $H(R+H)\le c_0$, and $AH\le R/4$.

Every $g\in L^2(B_R)$ with $\|g\|_{k-r+1/4,H}<\infty$ has a unique continuous representative on $B_R$. If the upper weight bound in \eqref{eq:weight-condition} holds, then
\begin{equation}
\label{eq:fractional-empirical-upper}
  \|g\|_{X,\omega}
  \le C\left[
    \|g\|_{L_\gamma^2(B_R)}
    +H^{k-r+1/4}\|g\|_{k-r+1/4,H}
  \right].
\end{equation}
Under both weight bounds in \eqref{eq:weight-condition},
\begin{equation}
\label{eq:fractional-local-sampling}
\|g\|_{H_\gamma^{S-r}(B_R)} \le C\bigl[ H^{k-S+1/4}\|g\|_{k-r+1/4,H} +H^{-(S-r)}\|g\|_{X,\omega} \bigr].
\end{equation}
\end{lemma}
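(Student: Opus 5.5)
The plan is to argue patch by patch on the normalized copies $\widehat Q_i^*:=(AH)^{-1}(Q_i^*-x_i)$ and then sum with the weights $\rho_d(x_i)$. By \autoref{lem:uniform-fractional-interfaces}, each $\widehat Q_i^*$ is convex, contains a ball of radius $1/4$, and carries a uniform extension operator and uniform norm equivalences. Set $t:=k-r+1/4$. The order relations \eqref{eq:deterministic-order-identities} give $t>d/2$, and $t<k-r+1/2$. Sobolev embedding $H^t(\mathbb R^d)\hookrightarrow C(\mathbb R^d)$, composed with $E_Q$, then gives a continuous representative on each patch with
\[
\|\widehat g\|_{L^\infty(\widehat Q)}\le C\|\widehat g\|_{H^t(\widehat Q)}.
\]
The patches cover $B_R$, since $aH>h_X$, so these representatives agree on overlaps and glue to a unique continuous representative on $B_R$.

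\textbf{Upper bound \eqref{eq:fractional-empirical-upper}.} For each $i$, I would write $g=\bar g_i+(g-\bar g_i)$ on $Q_i^*$, where $\bar g_i$ is the average of $g$ over $Q_i^*$; any fixed bounded projection onto constants works. I then apply the embedding on the normalized patch and scale back. Only the top-order seminorm survives, so a Poincar\'e/Bramble--Hilbert estimate on the uniformly convex normalized patches gives
\[
|g(x_i)|^2\le C\Bigl[(AH)^{-d}\|g\|_{L^2(Q_i^*)}^2+H^{2t-d}\|g\|_{H^t(Q_i^*)}^2\Bigr].
\]
I multiply by $\omega_i\le C_\omega H^d\rho_d(x_i)$ and sum over $i$. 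For the first term I use the density comparison \eqref{eq:patch-density-comparison} and bounded overlap of the $Q_i^*$, which comes from $q_X\gtrsim H$; this yields $\|g\|_{L_\gamma^2(B_R)}^2$. The second term is exactly $H^{2t}\|g\|_{t,H}^2$.

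\textbf{Lower bound \eqref{eq:fractional-local-sampling}.} This is the main step, and I expect it to be the main obstacle. I would use a local polynomial norming set together with Bramble--Hilbert. Because $h_X\lesssim H$ and $A$ is chosen large relative to $a$, each normalized set $\widehat X_i$ has fill distance $\lesssim 1/A$ in $\widehat Q_i^*$. Provided $A$ is large in terms of $d$ and $m:=k-r$, standard norming-set arguments for convex domains with uniform cone constants then give, uniformly in $i$,
\[
\|p\|_{L^\infty(\widehat Q_i^*)}\le 2\max_{x\in\widehat X_i}|p(x)|,\qquad p\in\mathcal P_{m}.
\]
I take $p_i$ to be the averaged Taylor polynomial of degree $m$ of $g$ on $\widehat Q_i^*$. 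Uniform Bramble--Hilbert in the fractional norm $W^{t,2}$, available via \eqref{eq:uniform-fractional-norm-equivalence}, gives
\[
\|g-p_i\|_{H^{S-r}(\widehat Q)}+\|g-p_i\|_{L^\infty(\widehat Q)}\le C|g|_{t}.
\]
The norming inequality combined with an inverse estimate for polynomials on $\widehat Q$ gives
\[
\|p_i\|_{H^{S-r}(\widehat Q)}\le C\Bigl(\max_{\widehat X_i}|g|+|g|_t\Bigr).
\]
Scaling back introduces $H^{-(S-r)+d/2}$ on the sampled term and $H^{t-(S-r)}=H^{k-S+1/4}$ on the remainder. I then bound $\max_{X_i}|g|^2$ by $\sum_{j\in I_i}|g(x_j)|^2$ and multiply by $\rho_d(x_i)$. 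Using \eqref{eq:patch-density-comparison} together with $\omega_j\ge c_\omega H^d\rho_d(x_j)$ turns these sums into $H^{-2(S-r)}\|g\|_{X,\omega}^2$, since $\#I_i$ is bounded by separation. Summing over $i$, and using that the $Q_i$ (hence the $Q_i^*$) cover $B_R$, completes the bound for $\|g\|_{H_\gamma^{S-r}(B_R)}$. The delicate point is keeping all constants uniform over truncated boundary patches; this is precisely what the uniform chunkiness in \autoref{lem:uniform-fractional-interfaces} is meant to supply.
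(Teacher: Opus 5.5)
Your lower bound follows the paper's proof. The common steps are a degree-$(k-r)$ polynomial projection with a fractional Bramble--Hilbert estimate on the normalized convex patches, a local fill bound inside $\overline{Q_i^*}$, Wendland's norming-set theorem for $\mathcal P_{k-r}$, a polynomial inverse estimate, scaling, and weighted summation using \eqref{eq:patch-density-comparison} and the lower weight bound. The paper uses a moment projector ($\int_Q D^\beta(v-P_Qv)=0$ for $|\beta|\le k-r$) with a self-contained Poincar\'e argument instead of the averaged Taylor polynomial; this difference is immaterial. When you sum the sampled terms, the fact you need is that each index $j$ lies in boundedly many $I_i$, not that $\#I_i$ is bounded; both follow from $q_X\gtrsim H$.

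The step that fails is in your proof of the upper bound \eqref{eq:fractional-empirical-upper}. You subtract only the mean $\bar g_i$ and assert that only the top-order seminorm survives. But $k-r$ is an integer exceeding $(d-1)/2\ge0$, so $t=k-r+1/4\ge5/4$. Bramble--Hilbert with constants cannot give $\|v-\bar v\|_{L^\infty(\widehat Q)}\lesssim|v|_t$: a nonconstant affine $v$ has $|v|_t=0$ but $v-\bar v\neq0$. Poincar\'e only bounds $v-\bar v$ by the seminorms of all orders $1,\dots,t$. After scaling, order $j$ contributes $H^{2j-d}|g|_{H^j(Q_i^*)}^2$, and for $1\le j\le k-r$ this is not dominated by $H^{2t-d}\|g\|_{H^t(Q_i^*)}^2$ as $H\to0$. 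So your displayed bound for $|g(x_i)|^2$ does not follow from the argument given.

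The intermediate orders must be removed on the normalized patch before scaling back. One option is to subtract the same degree-$(k-r)$ projection $P$ you use in the lower bound. Then bound $\|Pv\|_{L^\infty(\widehat Q)}\lesssim\|Pv\|_{L^2(\widehat Q)}\le\|v\|_{L^2(\widehat Q)}+\|v-Pv\|_{L^2(\widehat Q)}$, using norm equivalence on $\mathcal P_{k-r}$ over uniformly chunky patches. The paper instead applies the Fourier splitting $\|F\|_{H^{k-r}}\le\varepsilon\|F\|_{H^{k-r+1/4}}+C_\varepsilon\|F\|_{L^2}$ to the common extension $F=E_{\widehat Q_i^*}v$ and absorbs, giving $\|v\|_{H^t(\widehat Q_i^*)}\lesssim\|v\|_{L^2(\widehat Q_i^*)}+|v|_t$. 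Embedding and scaling then yield your display. Converting the scaled top seminorm on the unnormalized patch $Q_i^*$ back to the interpolation norm that defines $\|g\|_{t,H}$ requires the size-independent bound \eqref{eq:interpolation-top-derivative-bridge}, because \eqref{eq:uniform-fractional-norm-equivalence} is stated only for the normalized patches.
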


\begin{proof}
Let $\widehat Q_i^*=(AH)^{-1}(Q_i^*-x_i)$ and put $v(\xi)=g(x_i+AH\xi)$. Using the extension in \eqref{eq:uniform-sobolev-extension}, set $F:=E_{\widehat Q_i^*}v$. Fourier splitting gives $\|F\|_{H^{k-r}}\le\varepsilon\|F\|_{H^{k-r+1/4}} +C_\varepsilon\|F\|_2$; after restriction, \eqref{eq:uniform-fractional-norm-equivalence}, the common $L^2$ endpoint bound, and absorption imply
\[
 \|v\|_{H^{k-r}(\widehat Q_i^*)}
 \lesssim \|v\|_2+
 \Bigl(\sum_{|\alpha|=k-r}[D^\alpha v]_{W^{1/4,2}}^2\Bigr)^{1/2}.
\]
Together with \eqref{eq:uniform-sobolev-extension}--\eqref{eq:uniform-fractional-norm-equivalence} and $H^{k-r+1/4}(\mathbb R^d)\hookrightarrow C^0$, this gives the normalized supremum estimate and a continuous representative. The change of variables yields
\[
  \|v\|_2^2=(AH)^{-d}\|g\|_2^2,\qquad [D^\alpha v]_{W^{1/4,2}}^2=(AH)^{2(k-r)+1/2-d}[D^\alpha g]_{W^{1/4,2}}^2.
\]
Using \eqref{eq:interpolation-top-derivative-bridge}, we obtain, for every $g\in H^{k-r+1/4}(Q_i^*)$,
\begin{equation}
\label{eq:fractional-scaled-supremum-estimate}
  \|g\|_{L^\infty(Q_i^*)}^2
  \le C\left[
    H^{-d}\|g\|_{L^2(Q_i^*)}^2
    +H^{2(k-r)+1/2-d}\|g\|_{H^{k-r+1/4}(Q_i^*)}^2
  \right].
\end{equation}

The patch representatives agree on overlaps as continuous versions of the same $L^2$ function, and $AH>h_X$ makes the patches cover $B_R$. Multiply \eqref{eq:fractional-scaled-supremum-estimate} by $\omega_i$ and sum over $i$. The upper bound in \eqref{eq:weight-condition}, the density comparison \eqref{eq:patch-density-comparison}, and the bounded overlap of the $Q_i^*$ prove \eqref{eq:fractional-empirical-upper}.

To prove \eqref{eq:fractional-local-sampling}, first work on a normalized patch $Q$ and define $P_{Q}v\in\mathcal P_{k-r}$ by
\begin{equation*}
  \int_{Q}D^\beta(v-P_{Q}v)=0,
  \qquad |\beta|\le k-r.
\end{equation*}
The moment system is triangular in decreasing total degree, with diagonal $\beta!|Q|>0$, so it defines a linear projector. For $w=v-P_Qv$, every derivative $D^\beta w$ with $|\beta|\le k-r$ has zero mean. Since $\operatorname{diam}Q\le2$ and $|Q|\ge c>0$, for $|\alpha|=k-r$ this gives
\[
  \|D^\alpha w\|_{L^2(Q)}^2=\frac{1}{2|Q|}\int_Q\int_Q|D^\alpha w(x)-D^\alpha w(y)|^2\,dx\,dy\le C[D^\alpha v]_{W^{1/4,2}(Q)}^2,
\]
where $D^\alpha P_Qv$ is constant. For $|\beta|<k-r$, use $D^\beta w(x)-D^\beta w(y)=\int_0^1(x-y)\cdot\nabla D^\beta w(y+t(x-y))\,dt$, initially for smooth functions and then by density. Cauchy--Schwarz and $\int_Q D^\beta w=0$ give
\[
  \|D^\beta w\|_{L^2(Q)}^2\le2^{d-1}(\operatorname{diam}Q)^2\|\nabla D^\beta w\|_{L^2(Q)}^2.
\]
Indeed, split the integral in $t$ at $1/2$ and change variables in $y$ on the first half and in $x$ on the second; convexity keeps both images in $Q$, and the inverse Jacobians are bounded by $2^d$. Iterating over the derivative orders yields
\[
 \|w\|_{H^{k-r}(Q)}
 \lesssim\Bigl(\sum_{|\alpha|=k-r}
 [D^\alpha v]_{W^{1/4,2}(Q)}^2\Bigr)^{1/2}.
\]
Using \eqref{eq:uniform-sobolev-extension}--\eqref{eq:uniform-fractional-norm-equivalence} and Sobolev embedding, we control the $H^{k-r+1/4}$ and $L^\infty$ remainders by the fractional seminorms of the derivatives of order $k-r$. Scaling the linear projector back to $Q_i^*$ gives a single polynomial $P_i\in\mathcal P_{k-r}$, linear in $g$. For every $0\le j\le k-r$, the scaling factors are $(AH)^{k-r+1/4-j}$ in $H^j$ and $(AH)^{k-r+1/4-d/2}$ in $L^\infty$; the lower derivative orders are absorbed because $AH\le1$. Applying \eqref{eq:interpolation-top-derivative-bridge} on $Q_i^*$ and absorbing the fixed powers of $A$ into $C$ gives
\begin{align}
\label{eq:fractional-BH-scaled-Hj}
  \|g-P_i\|_{H^j(Q_i^*)}
  &\le CH^{k-r-j+1/4}\|g\|_{H^{k-r+1/4}(Q_i^*)},\\
\label{eq:fractional-BH-scaled-Linf}
  \|g-P_i\|_{L^\infty(Q_i^*)}
  &\le CH^{k-r-d/2+1/4}\|g\|_{H^{k-r+1/4}(Q_i^*)}.
\end{align}
To control $P_i$ by the sampled values, extend the global fill bound to $\overline B_R$. The choice of $A$ ensures $AH>4h_X$. A target within $AH-2h_X$ of $x_i$ has a global fill point inside $Q_i^*$; otherwise move it $2h_X$ toward $x_i$ and take a fill point there. Convexity keeps the moved point in $B_R$, and the resulting distance is at most $3h_X\lesssim H$. Thus
\begin{equation}
\label{eq:fractional-complete-local-fill}
  h_{X_i,\overline{Q_i^*}}
  :=\sup_{y\in\overline{Q_i^*}}
  \min_{x_j\in X_i}|y-x_j|
  \lesssim H.
\end{equation}

Fix $C_*>0$, depending only on the sampling constants, large enough that $h_{X_i,\overline{Q_i^*}}\le C_*H$. For any $Y_i\subset\overline{Q_i^*}$ with fill distance at most $C_*H$, scaling by $H^{-1}$ gives a patch with uniform cone angle and cone radius $\gtrsim A$. Taking $A$ sufficiently large in terms of $d,k-r,C_*$, \cite[Theorem~3.8]{Wendland2005} yields
\begin{equation}
\label{eq:fractional-polynomial-norming}
  \|p\|_{L^\infty(Q_i^*)}
  \le2\max_{y\in Y_i}|p(y)|,
  \qquad p\in\mathcal P_{k-r}(\mathbb R^d).
\end{equation}
This applies to $X_i$ and to the representative point sets used in the random sampling proof.

Since each normalized patch $Q\subset B_1$ contains a ball of radius $1/4$, polynomials $p\in\mathcal P_{k-r}$ satisfy $\|p\|_{H^j(Q)}\le C\|p\|_{L^\infty(Q)}$ uniformly for $0\le j\le k-r$. Scaling back and using $Q_i\subset Q_i^*$ gives, for $p\in\mathcal P_{k-r}$,
\begin{equation}
\label{eq:fractional-uniform-polynomial-inverse}
  \|p\|_{H^j(Q_i)}
  \le CH^{d/2-j}\|p\|_{L^\infty(Q_i^*)},
  \qquad 0\le j\le k-r.
\end{equation}

Combining \eqref{eq:fractional-BH-scaled-Hj}--\eqref{eq:fractional-BH-scaled-Linf}, \eqref{eq:fractional-polynomial-norming}, and \eqref{eq:fractional-uniform-polynomial-inverse} for the same $P_i$, summing $0\le j\le S-r$, and using $H\le1$ gives
\begin{equation*}
  \|g\|_{H^{S-r}(Q_i)}^2
  \le C\left[
    H^{2(k-S)+1/2}\|g\|_{H^{k-r+1/4}(Q_i^*)}^2
    +H^{d-2(S-r)}\sum_{\ell\in I_i}|g(x_\ell)|^2
  \right].
\end{equation*}
Since the sets $Q_i$ cover $B_R$ and each point belongs to at most a fixed number of sets $Q_i^*$, summing the preceding inequality with weights $\rho_d(x_i)$ and using \eqref{eq:patch-density-comparison} and the lower bound in \eqref{eq:weight-condition} proves \eqref{eq:fractional-local-sampling}.
\end{proof}

To apply these sampling bounds to PDE residuals, we next estimate the local Sobolev norm of $\mathcal Av$ in terms of $v$.

\begin{lemma}
\label{lem:fractional-residual-calculus}
\label{lem:fractional-local-operator}
\label{lem:fractional-patchwise-continuity}
Assume the coefficient bounds in \autoref{ass:solution-local-growth}. Let $R\ge1$ and suppose $q_X\gtrsim H$, $H(R+H)\le c_0$, and $AH\le R/4$.
For $v\in\mathcal H_\gamma^{k+1/4}(\mathbb R^d)$ and $v\in H^{k+1/4}(B_R)$, respectively,
\begin{align}
\label{eq:fractional-operator-global}
 \|\mathcal Av\|_{k-r+1/4,H}
 &\le C(1+R)^{p_2}\|v\|_{\mathcal H_\gamma^{k+1/4}(\mathbb R^d)},\\
\label{eq:fractional-operator-local}
 \|\mathcal Av\|_{k-r+1/4,H}
 &\le C(1+R)^{p_2}\|v\|_{H^{k+1/4}(B_R)}.
\end{align}
If also $h_X\lesssim H$ and $AH>h_X$, these residuals have unique continuous representatives on $B_R$.
\end{lemma}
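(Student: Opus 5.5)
The plan is to reduce both estimates to a patchwise product rule for $a_\alpha D^\alpha v$ in the fractional space $H^{k-r+1/4}$ on each patch $Q_i^*$, and then to sum with the weights $\rho_d(x_i)$. Write $t:=k-r+1/4=n+\theta$ with $n=k-r$ and $\theta=1/4$. For $|\alpha|\le r$ we have $D^\alpha v\in H^{k+1/4-|\alpha|}(Q_i^*)\subset H^{t}(Q_i^*)$. The coefficients satisfy $\|D^\beta a_\alpha\|_{L^\infty(B_R)}\le C(1+R)^{p_2}$ for $|\beta|\le k-r+1$, and this covers all derivatives needed to control a product at order $n+1$. We use the norm equivalence \eqref{eq:uniform-fractional-norm-equivalence} on normalized patches, noting that it is only uniform after rescaling to size one. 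So we rescale $Q_i^*$ by $AH\le 1$. Multiplication by $a_\alpha(x_i+AH\cdot)$ is bounded on $H^{j}$ of the normalized patch for $j=n,n+1$ with constant $C(1+R)^{p_2}$, because rescaling only shrinks the derivatives of the coefficient. By complex interpolation it is bounded on $H^{t}$ with the same constant. Scaling back requires care because the intrinsic integer norms are not homogeneous. Instead we argue directly on $Q_i^*$ with the Leibniz rule at integer orders $n$ and $n+1$: for $|\gamma|\le n+1$ we have $\|D^\gamma(a w)\|_{L^2}\le C(1+R)^{p_2}\|w\|_{H^{|\gamma|}}$ using only the $L^\infty$ bounds. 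Interpolating at $\theta$ between the pair $[H^n(Q_i^*),H^{n+1}(Q_i^*)]_\theta$, which is exactly the patch norm used in \eqref{eq:fractional-patch-norm}, gives
\[
\|\mathcal Av\|_{H^t(Q_i^*)}\le C(1+R)^{p_2}\sum_{|\alpha|\le r}\|D^\alpha v\|_{H^t(Q_i^*)}.
\]
In the same way, $D^\alpha$ maps $[H^{n+|\alpha|},H^{n+1+|\alpha|}]_\theta$ into $[H^n,H^{n+1}]_\theta$ boundedly, so the right-hand side is at most $C(1+R)^{p_2}\|v\|_{H^{k+1/4}(Q_i^*)}$ in the interpolation norm.

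For \eqref{eq:fractional-operator-local}, we sum $\rho_d(x_i)\|v\|^2_{H^{k+1/4}(Q_i^*)}$. Since $\rho_d(x_i)\le 1$ and the patches have bounded overlap by separation $q_X\gtrsim H$, at integer orders this sum is bounded by $C\|v\|^2_{H^{j}(B_R)}$. Interpolating the restriction map $v\mapsto(v|_{Q_i^*})_i$ into the weighted $\ell^2$ direct sum, exactly as at the end of the proof of \autoref{lem:uniform-fractional-interfaces}, gives the bound by $\|v\|_{H^{k+1/4}(B_R)}$ in the restriction/interpolation norm. That norm is uniformly equivalent by \eqref{eq:uniform-fractional-norm-equivalence}.

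For \eqref{eq:fractional-operator-global}, we apply \eqref{eq:fractional-gaussian-localization} with $t=k+1/4$, which is noninteger, to get $\sum_i\rho_d(x_i)\|v\|^2_{H^{k+1/4}(Q_i^*)}\le C\|v\|^2_{\mathcal H_\gamma^{k+1/4}}$. Combining this with the patchwise bound finishes the estimate.

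For the continuous representatives, the bounds show $\|\mathcal Av\|_{k-r+1/4,H}<\infty$, so the first statement of \autoref{lem:deterministic-fractional-sampling} applies once $AH>h_X$ and $h_X\lesssim H$. Its proof uses only the patch structure, and the resulting patch representatives agree on overlaps.

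The main obstacle is justifying that the interpolation-space patch norms commute correctly with multiplication and differentiation uniformly in $H$ and $R$. Our plan is to handle this by working with the fixed interpolation pair at integer endpoints on each patch, where Leibniz estimates are elementary and uniform. The only nonuniform quantity, the coefficient growth, enters through the single factor $(1+R)^{p_2}$.
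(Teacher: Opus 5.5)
Your proposal is correct and follows essentially the same route as the paper. You use Leibniz bounds for $\mathcal A$ at the integer endpoints $k-r$ and $k-r+1$ with factor $C(1+R)^{p_2}$, complex interpolation into the patch norm, and then \eqref{eq:fractional-gaussian-localization} at $k+1/4$ for the global bound. For the local bound you use bounded overlap, interpolation of the restriction map, and \eqref{eq:uniform-fractional-norm-equivalence} on $B_R$.

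The differences are cosmetic. The paper interpolates $\mathcal A$ as a single operator between the weighted direct sums rather than term by term on each patch. It also proves continuity directly from patchwise extension, $k-r+1/4>d/2$, and the covering of $B_R$, rather than invoking the first statement of Lemma~\ref{lem:deterministic-fractional-sampling}. That lemma's extra condition $H\le H_*$ is not actually needed for the continuity conclusion.
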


\begin{proof}
At $j=k-r$ and $j=k-r+1$, Leibniz's rule gives
\begin{equation*}
 \|\mathcal Av\|_{H^j(Q_i^*)}
 \le C(1+R)^{p_2}\|v\|_{H^{r+j}(Q_i^*)}.
\end{equation*}
The endpoint $k-r+1$ uses the highest coefficient derivative required in \autoref{ass:solution-local-growth}. Squaring, weighting by $\rho_d(x_i)$, summing, and interpolating the same operator between the two weighted Hilbert direct sums gives
\begin{equation}
\label{eq:fractional-operator-patch}
 \|\mathcal Av\|_{k-r+1/4,H}\le C(1+R)^{p_2}\|v\|_{k+1/4,H}.
\end{equation}
Combining this with \eqref{eq:fractional-gaussian-localization} at $k+1/4$ proves \eqref{eq:fractional-operator-global}.

For local data, bounded overlap gives uniform endpoint restriction maps $H^{r+j}(B_R)\to(\bigoplus_iH^{r+j}(Q_i^*))_{\ell^2(\rho_d(x_i))}$. Take the infimum over extensions to $\mathbb R^d$ and interpolate, using \eqref{eq:uniform-fractional-norm-equivalence} with $n=k$ and $\theta=1/4$, to obtain \eqref{eq:fractional-operator-local}.  Finally, the endpoint estimates identify one $L^2_{\rm loc}$ residual; fractional extension and $k-r+1/4>d/2$ make its patchwise versions continuous. They coincide on overlaps, and the fill condition makes the patches cover $B_R$, so the residual has a unique continuous representative on $B_R$.
\end{proof}

To obtain global stability from residual samples in $B_R$, we also need to control the Gaussian tails.

\begin{lemma}
\label{lem:coefficient-tail}
\label{lem:optimal-approximation-tail}
\label{lem:tail-from-local-growth}
The following bounds hold for every fixed $0<c<1/4$, with constants $C$ allowed to depend on $c$.
\begin{enumerate}
\item[\textup{(i)}] Let $r,s\ge0$ be integers with $r+s\le k$, let $v=\sum_{j=1}^Jc_j\phi_{\theta_j}$ with $\theta_j\in\Theta_R$ and $\sum_j|c_j|\le M$, and suppose $\cA=\sum_{|\alpha|\le r}a_\alpha D^\alpha$ satisfies $|D^\beta a_\alpha(x)|\le C(1+|x|)^{p_2}$ a.e.\ for $|\alpha|\le r$, $|\beta|\le s$. Then
\[
 \|\cA v\|_{H_\gamma^s(B_R^c)}\le CM(1+R)^{p_2+k}e^{-cR^2}.
\]
The constants are independent of $R,J,M$ and the parameter arrangement.

\item[\textup{(ii)}] Let $w_h$ be the approximant from \autoref{lem:convergence-approximant}. For every integer $0\le q\le k$,
\begin{equation*}
 \|w_h\|_{H_\gamma^q(B_R^c)}\le C(1+R)^{k+1/2}e^{-cR^2}\|u\|_{H^{s_k}(B_R)}.
\end{equation*}

\item[\textup{(iii)}] Under \autoref{ass:solution-local-growth}, every integer $0\le q<s_k$ satisfies
\begin{equation}
\label{eq:solution-tail-general-order}
 \|u\|_{H_\gamma^q(B_R^c)}\le C(1+R)^{p_1}e^{-cR^2},\qquad R\ge1.
\end{equation}
\end{enumerate}
\end{lemma}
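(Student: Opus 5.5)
The three tail bounds rest on one Gaussian fact. For every fixed $0<c<1/4$, every $p\ge0$ and $R\ge1$,
\[
\int_{B_R^c}(1+|x|)^{p}\,d\gamma(x)\le C_{c,p}e^{-2cR^2}.
\]
I will prove this in polar coordinates: the integral is $\int_R^\infty(1+t)^{p}t^{d-1}e^{-t^2/2}\,dt$, and the polynomial factor is absorbed into $e^{-(1/2-2c)t^2}$, which is possible because $2c<1/2$. Every part then reduces to a pointwise polynomial growth bound followed by this estimate, with the square root giving the factor $e^{-cR^2}$.

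For (i), I first bound each neuron and its derivatives. Since $|a|=1$ and $|b|\le C_b(1+R)$, we have $|D^\beta\phi_\theta(x)|\le C(|x|+C_b(1+R))^{k-|\beta|}\le C(1+R)^{k}(1+|x|)^{k}$ for $|\beta|\le k$, uniformly in $\theta\in\Theta_R$; here $(1+R+|x|)^k\le(1+R)^k(1+|x|)^k$. Applying Leibniz to $D^\beta(a_\alpha D^\alpha v)$ with $|\alpha|+|\beta|\le r+s\le k$ and the coefficient growth $(1+|x|)^{p_2}$, and then summing over $j$ with $\sum|c_j|\le M$, gives
\[
|D^\beta\mathcal Av(x)|\le CM(1+R)^k(1+|x|)^{p_2+k}.
\]
The factor $(1+|x|)^{p_2}$ is not $(1+R)^{p_2}$, so I keep it inside the integral. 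Squaring, integrating over $B_R^c$ and using the Gaussian estimate with $p=2(p_2+k)$ gives $CM(1+R)^ke^{-cR^2}$. Since $R\ge1$, this is at most the claimed $CM(1+R)^{p_2+k}e^{-cR^2}$. I would shrink $c$ slightly if I want the integral to produce exactly $e^{-2cR^2}$ after squaring, but any $c<1/4$ is already allowed.

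For (ii), I apply (i) with $\mathcal A=I$, $r=0$, $s=q\le k$ and $p_2=0$. The coefficient sum comes from \eqref{eq:optimal-coefficient-bound}, namely $M=C(1+R)^{1/2}\|u\|_{H^{s_k}(B_R)}$. This yields $C(1+R)^{k+1/2}e^{-cR^2}\|u\|_{H^{s_k}(B_R)}$. That $w_h$ is a finite combination of neurons from $\Theta_R$ is immediate from the lemma.

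Part (iii) is the main obstacle. Assumption~\ref{ass:solution-local-growth} controls only unweighted $H^{s_k}(B_{R'})$ norms, so I will decompose $B_R^c$ into dyadic shells $A_m=B_{2^{m+1}R}\setminus B_{2^mR}$ for $m\ge0$. On $A_m$ the density satisfies $\rho_d\le Ce^{-4^mR^2/2}$. Because $q<s_k$ and $q$ is an integer, $H^{s_k}(B_{R'})\hookrightarrow H^q(B_{R'})$ holds with constant $1$ for the restriction norms. Hence
\[
\|u\|_{H^q_\gamma(A_m)}^2\le Ce^{-4^mR^2/2}\,C(1+2^{m+1}R)^{2p_1}.
\]
Summing over $m$, the $m=0$ term dominates, and since $e^{-4^mR^2/2}\le e^{-R^2/2}e^{-(4^m-1)/2}$, the series converges. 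This gives $C(1+R)^{p_1}e^{-R^2/4}$, which is at most the claim for every $c<1/4$. The point to check carefully is that the integer-order derivative norm on $B_{R'}$ is dominated by the restriction Bessel norm uniformly in $R'$. I expect this to hold, since the restriction of any $H^{s_k}(\mathbb R^d)$ extension controls $\sum_{|\alpha|\le q}\|D^\alpha u\|_{L^2(B_{R'})}$ with a constant depending only on $d$ and $q$.
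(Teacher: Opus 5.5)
Your proof is correct and follows the paper's argument. Part (i) uses pointwise polynomial growth of the neurons and their derivatives together with the Gaussian tail integral, part (ii) applies (i) with $\mathcal A=I$ and the coefficient bound \eqref{eq:optimal-coefficient-bound}, and part (iii) uses a shell decomposition of $B_R^c$ with the local growth bound. The only differences are cosmetic and both routes give the stated bounds: you use dyadic shells where the paper uses unit-width annuli, and you keep a global factor $(1+R)^k$ where the paper uses $|x|\ge R$ on $B_R^c$ to bound the neurons by $C(1+|x|)^k$.
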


\begin{proof}
For (i), each derivative $D^\beta\cA\phi_{\theta_j}$ with $|\beta|\le s$ is bounded by $C(1+|x|)^{p_2+k}$. For any fixed $0<c<1/4$, polar coordinates give $\int_{|x|>R}(1+|x|)^{2(p_2+k)}\,d\gamma(x)\le C_c(1+R)^{2(p_2+k)}e^{-2cR^2}$: the remaining radial power is absorbed by $e^{-(1/2-2c)R^2}$. The triangle inequality and $\sum_j|c_j|\le M$ give the stated estimate. Part (ii) follows from (i) applied to the identity operator and the coefficient bound \eqref{eq:optimal-coefficient-bound}.

For (iii), decompose $B_R^c$ into the annuli $R+\ell<|x|\le R+\ell+1$, $\ell\ge0$. On each annulus, the Gaussian density is at most $Ce^{-(R+\ell)^2/2}$. Thus, for every fixed $0<c<1/4$,
\[
 \|u\|_{H_\gamma^q(B_R^c)}^2\le C\sum_{\ell\ge0}e^{-(R+\ell)^2/2}\|u\|_{H^{s_k}(B_{R+\ell+1})}^2\le C_c(1+R)^{2p_1}e^{-2cR^2}.
\]
Taking square roots proves \eqref{eq:solution-tail-general-order}.
\end{proof}

The inverse inequality in \autoref{thm:inverse} lets us absorb the remainder in the lower sampling bound and obtain discrete stability. We use a common seminorm $D_{R,H}$, taking $\|\cdot\|_{X,\omega}$ for deterministic sampling and $\|\cdot\|_{N,\gamma}$ on the event where the random sampling estimates hold.

\begin{lemma}
\label{lem:fractional-admissible-data-transfer}
\label{lem:fractional-data-norm-transfer}
\label{lem:fractional-constrained-stability}
Assume the geometric conditions of \autoref{lem:deterministic-fractional-sampling}, the coefficient bounds in \autoref{ass:solution-local-growth}, and the stability estimate for network functions in \autoref{ass:apriori}. Suppose that \eqref{eq:fractional-local-sampling} holds with $\|g\|_{X,\omega}$ replaced by $D_{R,H}(g)$, for the same class of functions and with a constant independent of $R,H,g$.

Let $\Xi\subset\Theta_R$ be finite with $\delta_\pm(\Xi)>0$, let $L>0$, and fix $0<c<1/4$. There is a fixed $C>0$, allowed to depend on $c$, such that, if $C(1+R)^{p_2}\bigl(H\max\{\delta_\pm(\Xi)^{-1},1+R\}\bigr)^{k-S+1/4}\le1/2$, every $z\in V_\Xi(L)$ satisfies
\begin{equation}
\label{eq:fractional-constrained-stability}
  \|z\|_{\mathcal H_\gamma^S}\le CH^{-(S-r)}D_{R,H}(\mathcal Az)+CL(1+R)^{p_2+k}e^{-cR^2}.
\end{equation}
\end{lemma}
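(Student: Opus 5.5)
The plan is to chain \autoref{ass:apriori}, the lower sampling bound, the operator estimate, and the inverse inequality, and then absorb the resulting remainder. Fix $z\in V_\Xi(L)$. By \autoref{ass:apriori}, $\|z\|_{\mathcal H_\gamma^S}\le C\|\mathcal Az\|_{\mathcal H_\gamma^{S-r}}$. Since $S-r$ is an integer, \eqref{eq:Hermite-derivative-norm-equivalence} lets us split the right-hand side into two pieces:
\[
\|\mathcal Az\|_{\mathcal H_\gamma^{S-r}}\le C\|\mathcal Az\|_{H_\gamma^{S-r}(B_R)}+C\|\mathcal Az\|_{H_\gamma^{S-r}(B_R^c)} .
\]
The exterior piece is handled by part~(i) of \autoref{lem:coefficient-tail} with $s=S-r$; note $r+s=S\le k$, and the coefficient growth follows from \autoref{ass:solution-local-growth}. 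This gives the tail term $CL(1+R)^{p_2+k}e^{-cR^2}$ in \eqref{eq:fractional-constrained-stability}. I expect a harmless reconciliation is needed between the local bounds on $B_R$ in the assumption and the pointwise growth hypothesis of part~(i). I would take $R'=|x|$ and use the bound on $B_{|x|+1}$.

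For the interior piece, the assumed version of \eqref{eq:fractional-local-sampling} with $D_{R,H}$ gives
\[
\|\mathcal Az\|_{H_\gamma^{S-r}(B_R)}\le C\bigl[H^{k-S+1/4}\|\mathcal Az\|_{k-r+1/4,H}+H^{-(S-r)}D_{R,H}(\mathcal Az)\bigr].
\]
This requires $\mathcal Az$ to be in the admissible class, which \autoref{lem:fractional-residual-calculus} supplies because $z\in V_\Xi\subset\mathcal H_\gamma^{k+1/4}$ by \autoref{lem:normalized-block-localization}. I then apply \eqref{eq:fractional-operator-global}, followed by \autoref{thm:inverse} with $\tau=k+1/4<k+1/2$ and $\sigma=S$:
\[
\|\mathcal Az\|_{k-r+1/4,H}\le C(1+R)^{p_2}\|z\|_{\mathcal H_\gamma^{k+1/4}}\le C(1+R)^{p_2}\max\{\delta_\pm^{-1},1+R\}^{k-S+1/4}\|z\|_{\mathcal H_\gamma^S}.
\]
The constant is independent of $R$ and the width, so the remainder is at most
\[
C(1+R)^{p_2}\bigl(H\max\{\delta_\pm^{-1},1+R\}\bigr)^{k-S+1/4}\|z\|_{\mathcal H_\gamma^S}.
\]

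Combining the pieces,
\[
\|z\|_{\mathcal H_\gamma^S}\le C(1+R)^{p_2}\bigl(H\max\{\delta_\pm^{-1},1+R\}\bigr)^{k-S+1/4}\|z\|_{\mathcal H_\gamma^S}+CH^{-(S-r)}D_{R,H}(\mathcal Az)+CL(1+R)^{p_2+k}e^{-cR^2}.
\]
I choose the constant $C$ in the hypothesis to be the product of the constants above. The smallness hypothesis then bounds the first term by $\tfrac12\|z\|_{\mathcal H_\gamma^S}$. Since $\|z\|_{\mathcal H_\gamma^S}<\infty$, this term can be absorbed into the left-hand side, and the bound \eqref{eq:fractional-constrained-stability} follows with doubled constants.

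The main obstacle is bookkeeping: making sure a single constant works in both the hypothesis and the conclusion, and checking that every constant is independent of $R$, $H$, $L$ and $\Xi$. The key input is that the inverse constant in \autoref{thm:inverse} is uniform in $R$.
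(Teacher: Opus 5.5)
Your proposal is correct and follows the paper's proof essentially step for step: Assumption~\ref{ass:apriori}, the interior/exterior split via \eqref{eq:Hermite-derivative-norm-equivalence}, the lower sampling bound with $D_{R,H}$, the operator estimate \eqref{eq:fractional-operator-global}, Theorem~\ref{thm:inverse} at $\sigma=S$, $\tau=k+1/4$, absorption, and part~(i) of Lemma~\ref{lem:coefficient-tail} for the exterior term. For the a.e.\ growth hypothesis, the paper applies the local coefficient bounds on countably many balls (for example, integer radii); this makes your pointwise choice of radius rigorous, since a union of uncountably many null sets need not be null.
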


\begin{proof}
The stability estimate in \autoref{ass:apriori}, \eqref{eq:Hermite-derivative-norm-equivalence}, \eqref{eq:fractional-local-sampling} with $D_{R,H}$ in place of $\|\cdot\|_{X,\omega}$, and \eqref{eq:fractional-operator-global} give
\[
\begin{aligned}
 \|z\|_{\mathcal H_\gamma^S}
 &\le C(1+R)^{p_2}
 \bigl(H\max\{\delta_\pm(\Xi)^{-1},1+R\}\bigr)^{k-S+1/4}\|z\|_{\mathcal H_\gamma^S}\\
 &\quad+CH^{-(S-r)}D_{R,H}(\mathcal Az)
 +C\|\mathcal Az\|_{H_\gamma^{S-r}(B_R^c)}.
\end{aligned}
\]
Here \autoref{thm:inverse} is used at $\sigma=S$ and $\tau=k+1/4<k+1/2$. Choosing the constant in the condition on $H$ sufficiently large makes the first term at most $\frac12\|z\|_{\mathcal H_\gamma^S}$. Moreover, $S-r\le k-r-1$ makes the coefficient bounds in \autoref{ass:solution-local-growth} sufficient at the exterior integer order; applying them on countably many balls gives the global a.e.\ growth hypothesis, and $S<k$ lets \autoref{lem:coefficient-tail} bound the exterior term by $CL(1+R)^{p_2+k}e^{-cR^2}$. Absorption proves \eqref{eq:fractional-constrained-stability}.
\end{proof}

\subsection{Random sampling}
\label{subsec:random-physical-sampling-proof}

In this subsection, we extend the sampling estimates to independent uniform points.

The argument fixes a deterministic partition, smaller balls inside its cells, and separated patch centers before drawing the independent uniform samples. Every observation enters the norm $\|\cdot\|_{N,\gamma}$, while choosing one point from each smaller ball gives the lower sampling bound.

\begin{lemma}
\label{lem:random-partition-occupancy}
\label{lem:random-cell-partition}
\label{lem:balanced-cell-occupancy}
Let $R\ge1$ and $0<H\le R$. Before sampling one can fix a convex Borel partition $B_R=\bigcup_{\ell=1}^{M_0}V_\ell$ with null boundaries, centers $Z_{H,R}=\{z_\ell\}_{\ell=1}^{M_0}\subset B_R$, points $\xi_\ell$, and balls $W_\ell:=B(\xi_\ell,H/384)\subset V_\ell$ such that
\begin{equation}
\label{eq:random-cell-geometry}
 M_0\asymp(R/H)^d,\qquad B(\xi_\ell,H/96)\subset V_\ell\subset B(z_\ell,H/8),\qquad |V_\ell|\asymp|W_\ell|\asymp H^d.
\end{equation}
For independent uniform samples $Y_1,\ldots,Y_N$ in $B_R$, let $\mathcal E_{R,H,N}$ be the event that every $W_\ell$ contains at least $N|W_\ell|/(2|B_R|)$ samples and every $V_\ell$ contains at most $3N|V_\ell|/(2|B_R|)$ samples.
There are $c,C>0$, depending only on $d$, such that
\begin{equation}
\label{eq:balanced-occupancy-probability}
 \mathbb P(\mathcal E_{R,H,N}^c)\le2M_0\exp(-cN/M_0).
\end{equation}
Consequently,
\begin{equation}
\label{eq:balanced-occupancy-sufficient}
 N\ge CM_0\log(CM_0/\eta)
\end{equation}
implies $\mathbb P(\mathcal E_{R,H,N})\ge1-\eta$.
\end{lemma}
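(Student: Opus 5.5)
The construction splits into a deterministic geometric part (partition, centers, inner balls) and a probabilistic part (occupancy), and I would do them in that order.

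\emph{Geometry.} I would start from the cubic lattice $(H/(8\sqrt d))\mathbb Z^d$, rescaled by a fixed factor, and take the cubes $Q$ of side $\ell_H:=H/(8\sqrt d)$ that meet $B_R$. Intersecting these cubes with $B_R$ directly gives convex pieces, but a piece near the sphere can be a tiny sliver that contains no ball of radius $\asymp H$. I would therefore merge each sliver into an adjacent cube whose intersection with $B_R$ is large. For the merged union to stay convex, it is better to use a radial partition near the boundary. Concretely, I would cover the shell $R-2\ell_H<|x|<R$ by cells of the form $\{t\omega: t\in(R-2\ell_H,R],\ \omega\in U\}$, where the sets $U$ form a partition of the sphere into caps of diameter $\asymp \ell_H/R$ in angle. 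Each such cell is a spherical-shell sector, and since the radial thickness is small relative to $R$, it is convex up to a harmless modification: replace it by the convex hull intersected with the partition constraints. Alternatively, any partition into cells of diameter $\le H/4$ each containing a ball of radius $\ge H/48$ can be refined so that the pieces are convex. On the interior I would use the interior cubes.

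Each cell $V_\ell$ then has diameter at most $H/4$, so $V_\ell\subset B(z_\ell,H/8)$ for a suitable center $z_\ell$, and it contains a ball $B(\xi_\ell,H/96)$. The constants are adjusted by choosing the lattice scale appropriately. The boundaries are null because the cells are finite unions or intersections of convex sets. Volume counting gives $M_0\asymp |B_R|/H^d\asymp (R/H)^d$, using $H\le R$. Setting $W_\ell:=B(\xi_\ell,H/384)$ yields $|W_\ell|\asymp|V_\ell|\asymp H^d$. I expect this geometric step, namely producing convex cells with uniform inner balls near $\partial B_R$ when $H$ is comparable to $R$, to be the main fiddly point. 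If the radial construction fails for $H\asymp R$, I would fall back on a fixed finite partition of $B_1$, rescaled by $R$, for the regime $H\ge R/C$.

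\emph{Probability.} For a fixed cell, the count $N_W:=\#\{i:Y_i\in W_\ell\}$ is $\mathrm{Bin}(N,p)$ with $p=|W_\ell|/|B_R|\asymp 1/M_0$. The multiplicative Chernoff bounds give
\[
\mathbb P(N_W<Np/2)\le e^{-Np/8},\qquad \mathbb P(N_V>3Np_V/2)\le e^{-Np_V/12}.
\]
Here $p_V=|V_\ell|/|B_R|$, and both $p$ and $p_V$ are $\ge c/M_0$. A union bound over the $2M_0$ events then gives $\mathbb P(\mathcal E_{R,H,N}^c)\le 2M_0\exp(-cN/M_0)$, which is \eqref{eq:balanced-occupancy-probability}.

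\emph{Conclusion.} Finally, suppose $N\ge CM_0\log(CM_0/\eta)$ with $C\ge\max\{2,1/c\}$. Then $cN/M_0\ge\log(2M_0/\eta)$, so the failure probability is at most $\eta$. This gives \eqref{eq:balanced-occupancy-sufficient}.
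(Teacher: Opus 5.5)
Your probabilistic part is correct and coincides with the paper's: multiplicative Chernoff bounds for the binomial counts in $W_\ell$ and $V_\ell$, whose success probabilities are $\asymp M_0^{-1}$, then a union bound over $2M_0$ events and the choice $C\ge\max\{2,1/c\}$. The gap is in the geometry. That part must deliver \emph{convex} cells with the stated inner and outer balls, and for $d\ge2$ your construction does not.

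A shell sector $\{t\omega:R-2\ell_H<t\le R,\ \omega\in U\}$ is not convex: its inner face lies on the sphere $|x|=R-2\ell_H$, which bulges into the cell. Neither of your repairs works.
\begin{itemize}
\item Convex hulls reach into $\{|x|<R-2\ell_H\}$ and overlap neighboring cells.
\item No finite refinement into convex pieces exists. If $K\subset\{|x|>R-2\ell_H\}$ is convex, then $\overline K$ meets the sphere $\{|x|=R-2\ell_H\}$ in at most one point, because the midpoint of two distinct points of that sphere lies strictly inside it. So finitely many convex pieces meet the inner sphere, after taking closures, in finitely many points. But the closure of the sector contains a whole cap of that sphere.
\end{itemize}
This also refutes your second fallback, that any partition can be refined into convex pieces; refining would in any case destroy the inner-ball bound. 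Two further problems: the cubes do not tile $B_{R-2\ell_H}$ without recreating slivers at its boundary, and even interior cubes cannot meet the dimension-free constants in \eqref{eq:random-cell-geometry}. A cube inside $B(z_\ell,H/8)$ has inradius at most $H/(8\sqrt d)$, which is below $H/96$ once $d>144$.

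The missing idea is to use Voronoi cells relative to $B_R$, so that convexity comes for free. Take a maximal $H/8$-separated set $Z_{H,R}\subset B_R$ and set $V_\ell:=B_R\cap\bigcap_{j<\ell}\{|x-z_\ell|<|x-z_j|\}\cap\bigcap_{j>\ell}\{|x-z_\ell|\le|x-z_j|\}$. These cells are intersections of $B_R$ with half-spaces, hence convex with null boundaries, and they partition $B_R$.
\begin{itemize}
\item Maximality gives $V_\ell\subset B(z_\ell,H/8)$.
\item Separation gives $B(z_\ell,H/24)\cap B_R\subset V_\ell$.
\item For $x\in B_R$ and $0<\rho\le R/4$, the set $B(x,\rho)\cap B_R$ contains a ball of radius $\rho/4$: either $B(x,\rho/2)$, or the ball of radius $\rho/4$ centered at $x-\rho x/(2|x|)$. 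Applying this with $\rho=H/24\le R/4$ gives $B(\xi_\ell,H/96)\subset V_\ell$.
\item The disjoint balls $B(z_\ell,H/16)$ then give $M_0\asymp(R/H)^d$.
\end{itemize}
This covers all $0<H\le R$ uniformly, so no separate regime $H\asymp R$ is needed.
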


\begin{proof}
Take a set $Z_{H,R}\subset B_R$ that is maximal subject to pairwise distances of at least $H/8$. Thus $|z_\ell-z_j|\ge H/8$ for $\ell\ne j$ and $q_{Z_{H,R}}\ge H/16$, while maximality gives $h_{Z_{H,R},B_R}\le H/8$. Disjoint balls of radius $H/16$ and volume comparison give the cell count in \eqref{eq:random-cell-geometry}. Separation also gives
\begin{equation}
\label{eq:random-cell-enlargement-overlap}
 \sup_{x\in B_R}\#\{\ell:x\in B(z_\ell,AH)\cap B_R\}\le C_{d,A}\qquad(A\ge1\ \hbox{fixed}).
\end{equation}
Using a fixed ordering to resolve ties, define the relative Voronoi cells
\[
 V_\ell:=B_R\cap\!\bigcap_{j<\ell}\{|x-z_\ell|<|x-z_j|\}\cap\!\bigcap_{j>\ell}\{|x-z_\ell|\le|x-z_j|\}.
\]
They are convex, form a Borel partition, and have null boundaries. Maximality gives $V_\ell\subset B(z_\ell,H/8)$, hence $\operatorname{diam}V_\ell\le H/4$, while separation gives $B(z_\ell,H/24)\cap B_R\subset V_\ell$. The geometric observation in the proof of \autoref{lem:uniform-fractional-interfaces}, with $\rho=H/24$, gives $B(\xi_\ell,H/96)\subset V_\ell$. These balls lie in disjoint cells, so
\begin{equation}
\label{eq:random-core-center-separation}
 |\xi_\ell-\xi_j|\ge H/48\qquad(\ell\ne j).
\end{equation}
The inclusions and the definition of $W_\ell$ give the volume bounds in \eqref{eq:random-cell-geometry}.

Set
\[
 K_\ell:=\#\{i:Y_i\in V_\ell\},\qquad L_\ell:=\#\{i:Y_i\in W_\ell\}.
\]
The volume bounds in \eqref{eq:random-cell-geometry} give $|W_\ell|/|B_R|\asymp|V_\ell|/|B_R|\asymp M_0^{-1}$. Multiplicative binomial lower and upper tails for $L_\ell$ and $K_\ell$, followed by a union bound over $\ell$, prove \eqref{eq:balanced-occupancy-probability}. Condition \eqref{eq:balanced-occupancy-sufficient} makes the failure probability at most $\eta$.
\end{proof}

We next use the cell counts on $\mathcal E_{R,H,N}$ to transfer the deterministic sampling estimates to $\|\cdot\|_{N,\gamma}$.

Choose fixed patch factors $a>1/4$ and $A$ as in \autoref{lem:fractional-local-sampling} for the bounds $h_X\le H/4$ and $q_X\ge H/128$, which hold for both the fixed centers and the core representatives below. Fix $A_*>A+1/8$.

For the rest of this subsection, use the patches and norm in \eqref{eq:fractional-patches}--\eqref{eq:fractional-patch-norm} with centers $Z_{H,R}$, $M_0$ patches, and outer factor $A_*$; $N$ continues to denote the number of random observations. The applications of \autoref{lem:fractional-residual-calculus} and \autoref{lem:fractional-data-norm-transfer} use these patches, while the application of \autoref{lem:fractional-local-sampling} centered at the core representatives uses the smaller factor $A$.

\begin{lemma}
\label{lem:random-full-sample-fractional-interface}
\label{lem:random-fractional-sampling-interface}
\label{lem:random-fractional-cell-representative-sampling}
\label{lem:random-fractional-empirical-upper}
\label{lem:random-fractional-physical-sampling}
Let $0\le r\le S<k$ be integers and assume $k-r>(d-1)/2$. There exist fixed $H_*,C>0$ such that, whenever $0<H\le H_*$, $H(R+H)\le c_0$, and $A_*H\le R/4$, every $g\in L^2(B_R)$ with $\|g\|_{k-r+1/4,H}<\infty$ has a unique continuous representative on $B_R$. On $\mathcal E_{R,H,N}$, simultaneously for every such $g$,
\begin{align}
\label{eq:random-fractional-empirical-upper}
 \|g\|_{N,\gamma}&\le C\left[\|g\|_{L_\gamma^2(B_R)}+H^{k-r+1/4}\|g\|_{k-r+1/4,H}\right],\\
\label{eq:random-fractional-physical-sampling}
 \|g\|_{H_\gamma^{S-r}(B_R)}&\le C\left[H^{k-S+1/4}\|g\|_{k-r+1/4,H}+H^{-(S-r)}\|g\|_{N,\gamma}\right].
\end{align}
Both estimates hold on the same event for every such $g$, including functions depending on the samples, and use all $N$ observations. Their constants are independent of $R,H,N,g$ and of the realization on the event.
\end{lemma}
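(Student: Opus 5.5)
The plan is to reduce both estimates to the deterministic sampling lemma (\autoref{lem:deterministic-fractional-sampling}). On the event $\mathcal E_{R,H,N}$ the random samples behave like a deterministic quasiuniform set plus bounded multiplicity. Continuity of $g$ does not depend on the samples. Near each fixed center $z_\ell$, the patch norm with outer factor $A_*$ dominates the scaled supremum estimate \eqref{eq:fractional-scaled-supremum-estimate} on $B(z_\ell,A_*H)\cap B_R$. These patches cover $B_R$, so the patchwise continuous representatives glue to a unique continuous one.

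For the upper bound \eqref{eq:random-fractional-empirical-upper}, group the observations by cell:
\[
\|g\|_{N,\gamma}^2=\frac{|B_R|}{N}\sum_{\ell}\sum_{Y_i\in V_\ell}\rho_d(Y_i)|g(Y_i)|^2 .
\]
On the event, $V_\ell$ contains at most $3N|V_\ell|/(2|B_R|)$ samples. We also have $V_\ell\subset B(z_\ell,H/8)$, and \eqref{eq:patch-density-comparison} gives $\rho_d(Y_i)\asymp\rho_d(z_\ell)$ there. Hence
\[
\|g\|_{N,\gamma}^2\lesssim\sum_\ell H^d\rho_d(z_\ell)\|g\|_{L^\infty(V_\ell)}^2 .
\]
Apply \eqref{eq:fractional-scaled-supremum-estimate} on $Q_\ell^*=B(z_\ell,A_*H)\cap B_R$, using $V_\ell\subset Q_\ell^*$, and sum over $\ell$. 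Bounded overlap \eqref{eq:random-cell-enlargement-overlap} and the density comparison then give the bound by $\|g\|_{L^2_\gamma(B_R)}^2+H^{2(k-r)+1/2}\|g\|_{k-r+1/4,H}^2$. This is exactly the computation behind \eqref{eq:fractional-empirical-upper}, with the deterministic weights replaced by $H^d\rho_d(z_\ell)$ times a bounded count. The step is uniform in $g$ because it only uses cell counts.

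For the lower bound \eqref{eq:random-fractional-physical-sampling}, choose a sample-dependent representative set. On $\mathcal E_{R,H,N}$ each $W_\ell$ contains at least one sample; pick one, $\widetilde Y_\ell\in W_\ell$. Because $W_\ell=B(\xi_\ell,H/384)$ and \eqref{eq:random-core-center-separation} holds, the set $\widetilde X=\{\widetilde Y_\ell\}$ has separation $\gtrsim H$ (at least $H/48-H/192\ge H/128$). Its fill distance in $B_R$ is at most $H/8+H/96+H/384\le H/4$. Thus $\widetilde X$ satisfies the hypotheses for which $a,A$ were fixed. Apply \eqref{eq:fractional-local-sampling} to $\widetilde X$ with weights $\omega_\ell=H^d\rho_d(\widetilde Y_\ell)$, which satisfy \eqref{eq:weight-condition}. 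This yields the bound with the patch norm centered at $\widetilde X$ with factor $A$, plus $H^{-(S-r)}\|g\|_{\widetilde X,\omega}$.

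Two comparisons finish the argument. First, $B(\widetilde Y_\ell,AH)\subset B(z_\ell,(A+1/8)H)\subset B(z_\ell,A_*H)$, and the density comparison holds. Monotonicity of the intrinsic interpolation norms under restriction then bounds the $A$-patch norm centered at $\widetilde X$ by the $A_*$-patch norm at $Z_{H,R}$. This needs care: restriction is bounded between the integer endpoint spaces with constant one, and it interpolates. Second, $\|g\|_{\widetilde X,\omega}^2\le\sum_\ell H^d\rho_d(\widetilde Y_\ell)|g(\widetilde Y_\ell)|^2$. Each chosen point is one of the $N$ observations, and $|B_R|/N\gtrsim H^d/N\cdot M_0\cdot$ const. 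On the event, each cell contains at least $cN/M_0$ samples (from the $W_\ell$ count), so $N\gtrsim M_0$ and $|B_R|/N\cdot N/M_0\asymp H^d$. However, discarding the other observations requires weights $H^d$ against $|B_R|/N$. Here I expect the main obstacle: $|B_R|/N$ may be much smaller than $H^d$, so a single point cannot be dominated directly.

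The fix is to average rather than pick one point. Every sample in $W_\ell$ is a valid representative, and the bound holds for each choice. Average the resulting inequalities over all admissible selections, that is, over each $W_\ell$'s samples independently. The data term becomes
\[
\sum_\ell H^d\rho_d\,L_\ell^{-1}\sum_{Y_i\in W_\ell}|g(Y_i)|^2 .
\]
Since $L_\ell\ge N|W_\ell|/(2|B_R|)\asymp NH^d/|B_R|$, this is at most $C\|g\|_{N,\gamma}^2$. Averaging is legitimate because the remainder term does not depend on the selection, and squaring with Jensen gives the claim. All constants depend only on fixed geometric data, so the estimate holds simultaneously for every $g$, including sample-dependent $g$.
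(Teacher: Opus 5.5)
Your proposal is correct and follows the paper's proof essentially step for step. It uses the cell-grouped upper bound via the occupancy cap $K_\ell\le 3N|V_\ell|/(2|B_R|)$ and the scaled supremum estimate on the $A_*$-patches. The lower bound picks one sample-dependent representative per core ball $W_\ell$, uses the inclusion $B(\widetilde Y_\ell,AH)\subset B(z_\ell,A_*H)$, and averages the squared estimate over all selections using $L_\ell\ge N|W_\ell|/(2|B_R|)$.

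One small slip: in $H/8+H/96+H/384$ you use $|z_\ell-\xi_\ell|\le H/96$, which the construction does not give; it only gives $|z_\ell-\xi_\ell|\le H/48$. The bound $h_{\widetilde X,B_R}\le H/4$ nevertheless follows directly from $\widetilde Y_\ell\in V_\ell\subset B(z_\ell,H/8)$.
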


\begin{proof}
Choose $H_*$ with $A_*H_*\le1$. The $H/8$ fill bound and $a>1/4$ make the inner patches cover $B_R$. The extension estimate \eqref{eq:uniform-sobolev-extension} and $k-r+1/4>d/2$ give continuous patch representatives. These representatives agree on overlaps and define a continuous function on $B_R$.

Fix $\omega\in\mathcal E_{R,H,N}$. Since $K_\ell\le3N|V_\ell|/(2|B_R|)$, applying \eqref{eq:fractional-scaled-supremum-estimate} and \eqref{eq:random-cell-enlargement-overlap} with outer factor $A_*$, together with \eqref{eq:random-cell-geometry} and \eqref{eq:patch-density-comparison}, gives
\[
 \|g\|_{N,\gamma}^2\le C\sum_\ell|V_\ell|\sup_{V_\ell}\rho_d|g|^2\le C\bigl(\|g\|_{L_\gamma^2(B_R)}^2+H^{2(k-r)+1/2}\|g\|_{k-r+1/4,H}^2\bigr).
\]
Taking square roots proves \eqref{eq:random-fractional-empirical-upper}.

To prove \eqref{eq:random-fractional-physical-sampling}, choose arbitrary $y_\ell\in W_\ell$ and set $X:=\{y_\ell\}_{\ell=1}^{M_0}$. Each $V_\ell$ lies within $H/4$ of $y_\ell$, while $|y_\ell-y_j|\ge H/48-2H/384=H/64$. Hence
\begin{equation}
\label{eq:random-core-representative-geometry}
 h_{X,B_R}\le H/4,\qquad q_X\ge H/128.
\end{equation}
By the volume bounds in \eqref{eq:random-cell-geometry}, the weights $|W_\ell|\rho_d(y_\ell)$ satisfy \eqref{eq:weight-condition}. The patches $\widetilde Q_\ell^*:=B(y_\ell,AH)\cap B_R$ satisfy $\widetilde Q_\ell^*\subset Q_\ell^*$ because $A_*>A+1/8$. Interpolating restriction from $Q_\ell^*$ to $\widetilde Q_\ell^*$ between integer orders and applying \eqref{eq:patch-density-comparison} with center $z_\ell$ and outer factor $A_*$ yields
\[
 \sum_\ell\rho_d(y_\ell)\|g\|_{H^{k-r+1/4}(\widetilde Q_\ell^*)}^2\lesssim\|g\|_{k-r+1/4,H}^2.
\]
Applying \autoref{lem:fractional-local-sampling} gives
\begin{equation}
\label{eq:random-fractional-cell-representative-sampling}
 \|g\|_{H_\gamma^{S-r}(B_R)}\le C\biggl[H^{k-S+1/4}\|g\|_{k-r+1/4,H}+H^{-(S-r)}\Bigl(\sum_{\ell=1}^{M_0}|W_\ell|\rho_d(y_\ell)|g(y_\ell)|^2\Bigr)^{1/2}\biggr].
\end{equation}
The constant is independent of $R,H,M_0$, the partition, and the chosen points $y_\ell$.

Every $W_\ell$ contains a sample on this event. Square \eqref{eq:random-fractional-cell-representative-sampling} and average over all choices of one sample index in each $W_\ell$, with $y_\ell$ equal to the corresponding observation. The resulting discrete term satisfies
\[
 \sum_\ell\frac{|W_\ell|}{L_\ell}\sum_{Y_i\in W_\ell}\rho_d(Y_i)|g(Y_i)|^2\le2\frac{|B_R|}{N}\sum_{i=1}^N\rho_d(Y_i)|g(Y_i)|^2=2\|g\|_{N,\gamma}^2.
\]
Taking square roots proves \eqref{eq:random-fractional-physical-sampling}. This finite averaging uses only the lower core counts, so the estimate holds simultaneously for all admissible $g$ on the same event, in the norm $\|\cdot\|_{N,\gamma}$.
\end{proof}

{\small
\bibliographystyle{plain}
\bibliography{gaussian_hermite_arxiv_compact_48pp_references}
}
\end{document}